\newcommand{\Xcomment}[1]{}
\newtheorem{theorem}{Theorem}[section]
\newtheorem{lemma}[theorem]{Lemma}
\newtheorem{corollary}[theorem]{Corollary}
\newtheorem{prop}[theorem]{Proposition}
\newenvironment{proof}{\noindent{\bf Proof}\/}%
{\hfill$\qed$\medskip}
\def\qed{\Box}
\makeatletter \@addtoreset{equation}{section} \makeatother
\newenvironment{numitem1}{\refstepcounter{equation}\begin{enumerate}%
\item[(\thesection.\arabic{equation})]}{\end{enumerate}}
\newcommand{\refeq}[1]{(\ref{eq:#1})}  
\renewcommand{\section}{\@startsection{section}{1}{0pt}%
{-3.5ex plus -1ex minus -.2ex}{2.3ex plus .2ex}%
{\normalfont\Large}}
\renewcommand{\subsection}{\@startsection{subsection}{2}{0pt}%
{-3.0ex plus -1ex minus -.2ex}{1.5ex plus .2ex}%
{\normalfont\normalsize\bf}}
 \newcommand{\SEC}[1]{\ref{sec:#1}}  
\newcommand{\SSEC}[1]{\ref{ssec:#1}}  
\def\Rset{{\mathbb R}}
\def\Zset{{\mathbb Z}}
\def\Ascr{{\cal A}}
\def\Bscr{{\cal B}}
\def\Cscr{{\cal C}}
\def\Dscr{{\cal D}}
\def\Escr{{\cal E}}
\def\Gscr{{\cal G}}
\def\Hscr{{\cal H}}
\def\Iscr{{\cal I}}
\def\Lscr{{\cal L}}
\def\Mscr{\EuScript{M}}
\def\Pscr{{\cal P}}
\def\Rscr{{\cal R}}
\def\Sscr{{\cal S}}
\def\Tscr{{\cal T}}
\def\Uscr{{\cal U}}
\def\Xscr{{\cal X}}
\def\frakC{\mathfrak{C}}
\def\frakG{\mathfrak{G}}
\def\tilde{\widetilde}
\def\hat{\widehat}
\def\bar{\overline}
\def\eps{\varepsilon}
\def\xmin{x^{\rm min}}
\def\xmax{x^{\rm max}}
\def\bmax{b^{\rm max}}
\def\qmax{q^{\rm max}}
\def\onebf{{\bf 1}}
\def\rest#1{_{\,\vrule height 1.5ex width 0.05em depth 1pt\, #1}}
\begin{document}
\parskip=2pt

\title{A poset representation for stable contracts in a two-sided market generated by integer choice functions }

\author{Alexander V.~Karzanov
\thanks{Central Institute of Economics and Mathematics of
the RAS, 47, Nakhimovskii Prospect, 117418 Moscow, Russia; email:
akarzanov7@gmail.com.}
}
\date{}

 \maketitle
\vspace{-0.7cm}
 \begin{abstract}
Generalizing a variety of earlier problems on stable contracts in two-sided markets, Alkan and Gale introduced in 2003 a general stability model on a bipartite graph $G=(V,E)$ in which the vertices are interpreted as ``agents'', and the edges as possible ``contract'' between pairs of ``agents''. The edges are endowed with nonnegative capacities $b$ giving upper bounds on ``contract intensities'', and the preferencies  of each ``agent'' $v\in V$ depend on a \emph{choice function} (CFs) that acts on the set of ``contracts'' involving $v$, obeying the well motivated axioms  of \emph{substitutability} and \emph{cardinal monotonicity}. In their model, the capacities and choice functions can take reals or discrete values and, extending well-known earlier results on particular cases, they proved that systems of \emph{stable} contracts always exist and, moreover, their set $\Sscr$ constitutes a distributive lattice under a natural comparison relation $\prec$.
 
In this paper, we study Alkan--Gale's model when all capacities and choice functions take integer values. We characterize the set of rotations -- augmenting cycles linking neighboring stable assignments in the lattice $(\Sscr,\prec)$, explain how to construct the rotations efficiently, and devise a weighted poset in which the lattice of closed functions is isomorphic to $(\Sscr,\prec)$, thus obtaining an explicit representation for the latter. We show that in general the size of the poset is at most $\bmax|E|$, where $\bmax$ is the maximal capacity, and the poset can be constructed in pseudo polynomial time. Then we explain that by imposing an additional condition on CFs, the size of the poset becomes polynomial in $|V|$, and the total time reduces to a polynomial in $|V|,\log\bmax$.
 \medskip

\noindent\emph{Keywords}: stable allocation, choice function, stablity model, rotation, distributive lattice, poset representation
 \end{abstract}


\section{Introduction}  \label{sec:intr}

The model due to Alkan and Gale on stable ``schedule matchings'' in two-sided markets was introduced in 2003 in~\cite{AG} as a far generalization of stable marriages studied in the prominent work  by  Gale and Shapley~\cite{GS} and subsequent extensions of ``one-to-many'' and ``many-to-many'' types. In particular, it essentially generalizes the popular stable allocation model by Baiou and Balinski~\cite{BB}. 

Recall that in~\cite{AG} one considers a \emph{bipartite} graph $G=(V,E)$ in which the edges $e\in E$ are endowed with nonnegative real upper bounds, or \emph{capacities} $b(e)\in\Rset_+$. The vertices of $G$ are interpreted as ``agents of the market'', and the edges as possible ``contracts'' between them, of which ``intensities'' are bounded by the capacities. More precisely, in a general setting of~\cite{AG}, for each vertex (``agent'') $v\in V$, a vector of contract values (``intensities'') concerning $v$ can be chosen within a prescribed closed subset $\Bscr_v$ of the box $\{z\in \Rset_+^{E_v}\colon z(e)\le b(e)~ \forall e\in E_v\}$, where $E_v$ is the set of edges of $G$ incident to $v$. The entire box and its integer sublattice are regarded as typical samples of $\Bscr_v$. 

The behavior of ``agent'' $v$ depends on a \emph{choice function} (CF), generalizing a linear order on $E_v$ in simpler models. This is a continuous map $C_v$ of $\Bscr_v$ into itself such that $C_v(z)\le z$ for all $z\in\Bscr_v$. The vectors $z\in\Bscr_v$ satisfying $C_v(z)=z$ are called \emph{acceptable}, and the CF $C_v$ establishes on them the so-called \emph{revealed preference} relation $\prec_v$, defined by $z'\prec_v z\Longleftrightarrow C_v(z\vee z')=z$ (where $z\vee z'$ takes the values $\max\{z(e),z'(e)\}$, $e\in E_v$). For $x\in \Rset_+^E$ and $v\in V$, let $x_v$ denote the restriction of $x$ to $E_v$, and let $\Bscr$ be the set $\{x\in \Rset_+^E \colon \,x_v\in\Bscr_v\; \forall v\in V\}$. A vector (assignment) $x\in \Bscr$ for which all restrictions $x_v$ are acceptable is said to be \emph{stable} if, roughly, there is no edge $e=\{u,v\}$ (``contract between agents $u$ and $v$'') such that one can increase $x$ at $e$ so as to obtain $x'\in\Bscr$ which is better than $x$ for both $u$ and $v$, in the sense that $C_u(x'_u)\succ_u x_u$ and $C_v(x'_v)\succ_v x_v$. 

In the setting of~\cite{AG}, the CFs $C_v$ ($v\in V$) are assumed to obey the axioms of \emph{consistence} and \emph{substitutability} (also called \emph{persistence}), which earlier have been known and well motivated for simpler models, going back to Kelso and Crawford~\cite{KC}, Roth~\cite{roth}, Blair~\cite{blair}, and some others. By adding one more axiom, of \emph{size-monotonicity}, Alkan and Gale succeeded to develop a (mini)theory embracing a variety of stability problems in weighted bipartite graphs. A central result in it, generalizing earlier results for particular cases, is that the set of stable assignments is nonempty and constitutes a \emph{distributive lattice} generated  by the above-mentioned relations $\prec_v$ for vertices $v$ in one side of $G$. (Precise definitions and a brief account on results from~\cite{AG} needed to us will be given in Sect.~\SEC{defin}.) As simple illustrations of their model, Alkan and Gale point out the stable allocation problem and the stable diversification one. We refer to a general case in~\cite{AG} as \emph{AG-model}. 

Following terminology in~\cite{AG}, in a fixed partition of $V$ into two sides (color classes of $G$), one side is denoted by $W$, and the other by $F$, called the sets of \emph{workers} and \emph{firms}, respectively. Let $\Sscr$ denote the set of stable vectors (assignments). For $x,y\in\Sscr$, we write $x\prec_F y$ if $x_f\prec_f y_f$ holds for all $f\in F$; the relation $\prec_W$ is defined symmetrically. The \emph{polarity} established in~\cite{AG} says that $x\prec_F y$ if and only if $y\prec_W x$.

It should be noted that some tempting issues are left beyond discussion in~\cite{AG}, such as: (i) characterize (and/or efficiently construct) augmenting vectors (like rotations or so) determining ``elementary'' transformations in the lattice $(\Sscr,\prec_F)$; or (ii) represent $(\Sscr,\prec_F)$ via the lattice of closed (ideal-wise) functions on an explicitly constructed poset. When $\Sscr$ is finite, such a poset exists by the classical theorem due to Birkhoff~\cite{birk} (though to find it may take  big efforts when the lattice is given implicitly). For a general domain $\Bscr$ and general CFs $C_v$ in the AG-model, both problems (i) and (ii) look highly sophisticated. In some particular cases, e.g. those occurring in \cite{IL,ILG,BB,BAM,DM,FZ}, both problems are solved efficiently, in strongly polynomial time; here the augmenting vectors are related to certain cycles, called \emph{rotations}, which simultaneously serve as elements of the poset in representation of $(\Sscr,\prec_F)$. (Originally the concept of rotation, under the name of ``all-or-nothing cycle'', was introduced by Irving~\cite{irv} in connection with the stable roommates problem.) A different sort of rotations is demonstrated in the work~\cite{karz1} devoted to a sharper version of stable allocation problem where the vertices are endowed with \emph{weak} (rather than strong) linear orders; here both problems are solvable in strongly polynomial time as well, but for integer-valued capacities and quotas, rotations may be formed by rational vectors having exponentially large numerators and denominators in the size of the graph.

In this paper, we consider the integer version of AG-model; namely, we assume that $b\in\Zset_+^E$ and for each $v\in V$, the CF $C_v$ is an arbitrary choice function obeying the above axioms and acting within the integer box $\{z\in\Zset_+^{E_v}\colon z(e)\le b(e)\; \forall e\in E_v\}=:\Bscr_v$. We will refer to the vectors in $\Bscr$ acceptable for all ``agents'' as \emph{generalized matchings}, or \emph{g-matchings} for short, and use the abbreviation \emph{SGMM} for our stable g-matching model. (Note that our study of SGMM was inspired by some ideas from the weakened version~\cite{karz3} where an ``intermediate'' stability model between SGMM and the stable allocation problem is considered.) Regarding computational aspects, we assume that each $C_v$ is given via an \emph{oracle} that, being asked of $z\in\Bscr_v$, outputs the vector $C_v(z)$. 

Our first aim is to characterize the irreducible augmentations in the lattice $(\Sscr,\prec_F)$, by addressing the following problem: given $x\in\Sscr$, find the set $\Sscr_x$ of stable g-matchings $x'$ \emph{immediately succeeding} $x$ in the lattice, i.e. $x\prec_F x'$, and there is no $y\in\Sscr$ between $x$ and $x'$. We solve it by constructing a set $\Rscr(x)$ of \emph{edge-simple} cycles in $G$ (which need not be simple) where each cycle $R$ is associated with a $0,\pm 1$ function $\chi^R$ on $E$ and determines the g-matching $x':=x+\chi^R$ in $\Sscr_x$. This gives a bijection between $\Rscr(x)$ and $\Sscr_x$. The cycles in $\Rscr(x)$ are just what we call the \emph{rotations} applicable to $x$.

For a rotation $R\in\Rscr(x)$, we denote by $\tau_R(x)$ the maximal integer weight such that $x+\lambda\chi^R$ is a stable g-matching for all $\lambda=1,2,\ldots,\tau_R(x)$. Rotations and their maximal weights play an important role in an explicit representation of the lattice $(\Sscr,\prec_F)$. Here we follow an approach originally appeared in Irving and Leather~\cite{IL} (for stable marriages) and subsequently developed in some extensions (e.g. in Bansal et al.~\cite{BAM} for the ``many-to-many'' model, and in Dean and Munshi~\cite{DM} for stable allocations).

More precisely, we construct a sequence $\Tscr=(x^0,x^1,\ldots,x^N)$ of g-matchings along with rotations $R_1,\ldots,R_{N}$ and weights $\tau_1,\ldots,\tau_{N}\in\Zset_{>0}$ such that: $x^1$ and $x^N$ are, respectively, the minimal ($\xmin$) and maximal ($\xmax$) element of $(\Sscr,\prec_F)$, and $x^{i}=x^{i-1}+\tau_i\, \chi^{R_i}$ for $i=1\ldots, N$, where $R_i\in\Rscr(x^{i-1})$ and $\tau_i=\tau_{R_i}(x^{i-1})$ (note that $R_i=R_j$ for $i\ne j$ is possible). We call $\Tscr$ a \emph{full route}. An important fact is that the family of pairs $(R,\tau)$ (with possible repetitions) does not depend on a full route. Moreover, taking the (canonical) family $\Uscr$ of all rotation occurrences in a full route, we arrange a transitive and antisymmetric binary relation $\lessdot$ on it, obtaining a weighted poset $(\Uscr,\tau,\lessdot)$ (where each rotation occurrence $R\in\Uscr$ is taken with the corresponding maximal weight $\tau(R)$). This gives rise to a canonical bijection $\omega$ of $\Sscr$ to the set of closed functions on this poset, just giving the desired representation for $(\Sscr,\prec_F)$. Here a function $\lambda:\Uscr\to\Zset_+$ with $\lambda \le \tau$ is called \emph{closed} if $R\lessdot R'$ and $\lambda(R')>0$ imply $\lambda(R)=\tau(R)$. A closed function $\lambda$ generates the stable g-matching $x=\omega^{-1}(\lambda)$ by setting $x:=\xmin+\sum(\lambda(R)\chi^{R}\colon R\in\Uscr)$. 

Note that there is an essential difference between full routes $x^0,x^1,\ldots,x^N$ occurring in SGMM and in usual stability models, such as the stable allocation one. The behavior for the latter is simpler at two points. First, for each $i$, the weight $\tau_i$ of $R_i$ is computed easily, to be the minimum among the residual capacities $b(e)-x^i(e)$ when $\chi^{R_i}(e)=1$, and the values $x^i(e)$ when $\chi^{R_i}(e)=-1$. Second, all rotations $R_1,\ldots,R_{N}$ are different simple cycles, and $N$ is $O(|E|)$. This implies that the elements of the representing poset can be identified with the rotations and the poset has $O(|E|)$ elements. Moreover, the rotations and their poset can be constructed in strongly polynomial time (e.g. an algorithm in~\cite{DM} has running time $O(|E|^2\log |V|)$).

In case of SGMM, the behavior is more intricate. First, given a rotation $R$ applicable to a stable $x$, to compute the maximal weight $\tau_R(x)$ takes $O(\bmax)$ iterations  in general. Second, in the process of constructing a full route, one and the same rotation can appear several times (even $\Omega(\bmax)$ times). This can be illustrated by an example where the basic graph $G$ has only six vertices, but the size of the poset is of order $\bmax$.
Yet, we can show that in a general case of SGMM, the size is $O(\bmax|E|)$  (in Lemma~\ref{lm:N}). 

As an additional contribution, we explain that ``pseudo polynomial behavior'' can be improved by imposing one more requirement on the choice functions $C_v$, $v\in V$, that we call the \emph{gapless} condition (defined in Sect.~\SEC{special}). It leads to the following property: if $x,x',x''$ are stable g-matchings such that $x\prec_F x'\prec_F x''$ and if a rotation $R$ is applicable to $x$ and to $x''$, then $R$ is applicable to $x'$ as well. (The class of CFs obeying this requirement, in addition to the axioms as above, includes as a special case the CFs generated by linear orders, such as in the stable allocation model. There are more general interesting representatives of this class; however, the limits of this paper do not allow us to discuss this subject here, leaving it to a separate work.)

We show that subject to the gapless condition, all rotations occurring in a full route are different and the number of rotations is estimated as $O(|E|^2|F| |W|)$ (in Theorem~\ref{tm:condC}). This implies that the weighted poset  has a similar size $O(|E|^2|F| |W|)$. We construct this poset in weakly polynomial time (where a factor of $O(\log \bmax)$ arises in the procedure of finding the maximal feasible weight $\tau_R$ of a rotation $R$, based on a ``divide-and-conquer'' technique). (Note that in applications, the availability of such a poset, which has size polynomial in $|V|$, gives rise to solvability, in strongly polynomial time, of the problem of minimizing a linear function over the stable g-matchings, by reducing it to the usual min-cut problem via Picard's method~\cite{pic}, like it was originally proposed for stable marriages of minimum weight in~\cite{ILG}).)

(Concerning computational complexity aspects, we follow standard terminology, such as in~\cite{GJ}, and say that an algorithm that we deal with in our case is \emph{pseudo, weakly, strongly} polynomial if its running \emph{time}, concerning standard operations plus oracle calls, is bounded by a polynomial in $|V|$ and $\bmax$, in $|V|$ and $\log \bmax$, and in $|V|$, respectively. The term \emph{efficient} is applied to weakly and strongly polynomial algorithms. Note that we often do not care of precisely estimating time bounds of procedures that we devise and restrict ourselves merely by establishing their pseudo, weakly, or strongly polynomial-time complexity.
As to the oracles for CFs, we assume their efficiency (as though thinking that one application of $C_v$ takes time polynomial in $|E_v|$). In fact, we will estimate the number of  \emph{oracle calls} rather than the complexity of their implementations, like in a wide scope of problems dealing with oracles.)

In should be noted that in some of our constructions we appeal to ingredients in Birkhoff's (mini)theory in~\cite{birk} on finite distributive lattices and their poset representations. In addition, we explain that if the lattice is given via an oracle that, being asked of an element, outputs its immediate successors, then the poset can be constructed by using $O(|N|^2)$ oracle calls, where $N$ is the length of a maximal chain in the lattice.

This paper is organized as follows.
Section~\SEC{defin} is devoted to basic definitions and backgrounds. In Sect.~\SEC{act-rot} we describe the construction of rotations for our model SGMM and show that they correspond to the immediate succeeding relations in the lattice of stable g-matchings $(\Sscr,\prec_F)$ (Propositions~\ref{pr:xxp} and~\ref{pr:xpy}). Section~\SEC{addit_prop} considers so-called \emph{non-excessive routes}, sequences $(x=x^0,x^1,\ldots,x^N=y)$ of stable g-matchings connecting a fixed pair $x\prec_F y$, where each $x^i$ is obtained from $x^{i-1}$ by applying some rotation $R_i$ taking with maximal possible weight $\lambda_i$. The main fact shown here, in Proposition~\ref{pr:invar_rot} (extending the well-known invariance property of rotations for stable marriages from~\cite{IL})  is that the family $\Pi(x,y)$ of pairs $(R_i,\lambda_i)$ (with possible repetitions) does not depend on the non-excessive route from $x$ to $y$. In Sect.~\SEC{poset_rot}, taking as $x,y$ the minimal ($\xmin$) and maximal ($\xmax$) element in $(\Sscr,\prec_F)$, we turn $\Pi(x,y)$ into the set of elements and their weights in the desired poset $(\Uscr,\tau,\lessdot)$. Here, to construct the partial order $\lessdot$ and prove that the lattice of closed functions in the poset is isomorphic to $(\Sscr,\prec_F)$ (Theorem~\ref{tm:isomorph_complete}), we attract additional arguments (in Sects.~\SSEC{birk}--\SSEC{complete_lat}), in particular, some strengthenings of  constructions and results from Birkhoff~\cite{birk}.

Section~\SEC{comput} estimates the complexity of procedures involved in the process of constructing the poset $(\Uscr,\tau,\lessdot)$. One shows that in a general case of SGMM, the whole algorithm is pseudo polynomial; in particular, the amount of oracle calls for CFs is estimated as $O(|E|^2(\bmax N+N^2))$  (Theorem~\ref{tm:gen_time}), and $N$ is $O(\bmax|E|)$ (Lemma~\ref{lm:N}), where $N$ is the length of a non-excessive (full) route from $\xmin$ to $\xmax$. In Sect.~\SEC{special} we introduce the above-mentioned \emph{gapless condition} on the choice functions and show that our methods become efficient in this case. More precisely, one shows (in Theorem~\ref{tm:condC} and assertion~\refeq{complex_C}) that: any rotation occurs in a non-excessive route at most once; the number of rotations and the length $N$ of a full route is $O(|E|^2 |F| |W|)$; and the poset $(\Uscr,\tau,\lessdot)$ can be constructed in weakly polynomial time, namely, in $\log \, \bmax$ times a polynomial in $|V|$. A nontrivial task on this way is to find the initial element $\xmin$; we elaborate a weakly polynomial algorithm to solve this task. In concluding Sect.~\SEC{concl} we briefly outline two issues: finding a stable g-matching of minimum total cost when the poset $(\Uscr,\tau,\lessdot)$ is available, and an example of a ``small'' graph $G$ having a ``big'' poset representation (following~\cite{karz3}).


\section{Definitions and settings} \label{sec:defin}

In what follows we often use terminology and definitions from~\cite{AG}. 

We consider a bipartite graph $G=(V,E)$ in which the vertex set $V$ is
partitioned into two parts (independent sets, color classes) $W$ and $F$, conditionally called the sets of \emph{workers} and \emph{firms}, respectively. The edges $e\in E$ are endowed with nonnegative integer \emph{capacities}
$b(e)\in\Zset_{+}$.
One may assume, without loss of generality, that the graph $G$ is connected and
has no multiple edges. Then $|V|-1\le |E|\le \binom{|V|}{2}$. The
edge connecting vertices $w\in W$ and $f\in F$ may be denoted as $wf$.

For a vertex $v\in V$, the set of its incident edges is denoted by $E_v$. We write $\Bscr=\Bscr^{(b)}$ for the nonnegative \emph{integer} box $\{x\in \Zset_+^E\colon x\le b\}$. For $v\in V$, the restriction of $\Bscr$ to the set $E_v$ is denoted by $\Bscr_v$.
\medskip

$\bullet$ (\textbf{choice functions}) Each vertex (``agent'') $v\in V$ can prefer one vector in $\Bscr_v$ to another. The preferences depend on a \emph{choice function} (CF) related to $v$. This is a map $C=C_v$ of $\Bscr_v$ into itself satisfying $C(z)\le z$ for all $z\in \Bscr_v$. Also $C$ obeys additional axioms. Two of them consider $z,z'\in\Bscr_v$ and require that:
  \begin{itemize}
\item[(A1)] if $z\ge z'\ge C(z)$, then $C(z')=C(z)$ (\emph{consistence});
\item[(A2)] if $z\ge z'$, then $C(z)\wedge z'\le C(z')$ (\emph{substitutability}, or
\emph{persistence}).
  \end{itemize}

\noindent(Hereinafter, for $a,b\in\Rset^S$, the functions $a\wedge b$ and $a \vee b$  take the values $\min\{a(e),b(e)\}$ and $\max\{a(e),b(e)\}$, $e\in S$, respectively.)

In particular, axiom (A1) implies that any $z\in\Bscr_v$ satisfies $C(C(z))=C(z)$. Also (A1) and (A2) imply that $C$ is \emph{stationary} (by terminology of~\cite{AG}), which means that
 \begin{numitem1} \label{eq:plott}
any $z,z'\in\Bscr_v$ satisfy $C(z\vee z')=C(C(z)\vee z')$.
  \end{numitem1}

\noindent(In case of boolean assignments (taking values 0 and 1), it is known as the  property of \emph{path independence} introduced by Plott~\cite{plott}.)
One more well-known axiom imposed on $C=C_v$ for each $v\in V$ is called the \emph{size monotonicity} condition; it requires that
 \begin{itemize}
 \item[(A3)] if $z\ge z'$, then $|C(z)|\ge |C(z')|$.
 \end{itemize}
 
\noindent(Hereinafter, for a numerical function $a$ on a finite set $S$, we write $a(S)$ for $\sum(a(e) : e\in S)$, and $|a|$ for $\sum(|a(e)|\colon e\in S)$. In particular,
$a(S)=|a|$ if $a$ is nonnegative.)

Note that one easily shows that (A2) and~(A3) imply (A1); yet, it is convenient to us to keep~(A1) as well. A special case of~(A3) is the condition of \emph{quota filling}; it is applied when a vertex $v\in V$ is endowed with a \emph{quota} $q(v)\in \Zset_+$  and is viewed as:
 \begin{itemize}
 \item[(A4)]  any $z\in\Bscr_v$ satisfies $|C(z)|=\min\{|z|,q(v)\}$.
 \end{itemize}

\noindent\textbf{Example 1.} 
A popular instance of choice functions $C_v$ obeying axioms (A1),(A2) and (A4) is generated by a \emph{linear order} $>_v$ on the set $E_v$. Here
for $e,e'\in E_v$ with $e>_w e'$, the ``agent'' $v$ is said to prefer the edge (``contract'') $e$ to $e'$. Then $C_v$ is defined by the following rule related to  the order $>_v$ and a quota $q(v)\in\Zset_+$: for $z\in\Bscr_v$, if $|z|\le q(v)$, then $C_v(z):=z$; and if $|z|> q(v)$, then, renumbering the edges in $E_v$ as $e_1,\ldots,e_{|E_v|}$ so that $e_i>_v e_{i+1}$ for each $i$, take the maximal $j$ satisfying $r:=\sum(z(e_i)\colon i\le j)\le q(v)$ and define $C_v(z)$ to be $(z(e_1),\ldots,z(e_j),q(v)-r,0,\ldots,0)$.
 \medskip

$\bullet$ (\textbf{stability}) For a vertex $v\in V$ and a function $x$ on $E$, let $x_v$ denote the restriction of $x$ to  $E_v$. A vector (function, assignment) $z\in \Bscr_v$ is called  \emph{acceptable} if $C_v(z)=z$; the collection of such vectors is
denoted by $\Ascr_v$. This notion is extended to $\Bscr$; namely, we
say that $x\in\Bscr$ is (globally) acceptable if $x_v\in\Ascr_v$ for all $v\in V$. The collection of acceptable vectors in $\Bscr$ is denoted by $\Ascr$.

For any $v\in V$, the CF $C_v$ establishes preference relations on acceptable functions on $E_v$; namely (cf.~\cite{AG}), $z\in\Ascr_v$ is said to be \emph{revealed preferred} to $z'\in\Ascr_v-\{z\}$ if
   \begin{equation} \label{eq:zzp}
   C_v(z\vee z')=z,
   \end{equation}
denoted as $z'\prec_v z$. The relation $\prec_v$ is transitive (which is shown by use of~\refeq{plott}).

The preferences between acceptable functions on the sets $E_v$, $v\in
V$, are extended, in a natural way, to the whole $E$. Namely, for the side $F$ of $G$ and distinct $x,y\in \Ascr$, we write $x\prec_F y$ if $x_f\preceq_f y_f$ holds for all $f\in F$. The preferences in $\Ascr$ relative to the
``workers'' are defined in a similar way and denoted as $\prec_{\,W}$.

Given $G$, $b$  and $C_v$ ($v\in V$) as above, we will refer to functions $x\in\Ascr$ on $E$ as (acceptable) \emph{generalized matchings}, or \emph{g-matchings} for short. 
 \medskip

\textbf{Definition 1.} Let $v\in V$ and $z\in\Ascr_v$. We say that an edge $e\in E_v$ is \emph{interesting} for $v$ under $z$ if there exists $z'\in\Bscr_v$ such that
  \begin{equation} \label{eq:inter_e}
  z'(e)>z(e),\quad \mbox{$z'(e')=z(e')$ for all $e'\ne e$,}\quad \mbox{and $C_v(z')(e)>z(e)$}
  \end{equation}
(the term ``interesting'' is justified by an expectation that an increase at $e$ gives rise to a better assignment for $v$). Extending this to g-matchings on $E$, we say that an edge $e=wf\in E$ is \emph{interesting} for a vertex (``agent'') $v\in\{w,f\}$ under a g-matching $x\in\Ascr$ if so is for $v$ and $x_v$. If  $e=wf\in E$
is interesting under $x$ for both vertices $w$ and $f$, then the edge $e$ is
called \emph{blocking} $x$. A g-matching $x\in \Ascr$ is called
\emph{stable} if no edge in $E$ blocks $x$. The set of stable g-matchings 
is denoted by $\Sscr=\Sscr_{G,b,C}$.
\medskip

$\bullet$ ~For $v\in V$, the set $\Ascr_v$ endowed with the preference relation
$\succ_v$ turns into a \emph{lattice}. In this lattice, for $z,z'\in\Ascr_v$, the
least upper bound (join) $z\curlyvee z'$ is expressed as $C_v(z\vee z')$, and
the greatest lower bound (meet) $z\curlywedge z'$ is expressed as $C_v(\bar z\wedge \bar z')$, where $\bar y$ is the \emph{closure} of $y\in\Ascr_v$, defined as $\sup(y'\in\Bscr_v\colon C_v(y')=y)$ (the supremum is well defined since for $y',y''\in \Bscr_v$,  if $C_v(y')=C_v(y'')=y$, then $C_v(y'\vee y'')=y$, by applying~\refeq{plott}). For details, see~\cite{AG}.
 \medskip

$\bullet$ ~In a general setting, the AG-model of~\cite{AG} deals with a bipartite graph $G=(V,E)$ and domains $\Bscr_v$ ($v\in V$) that are closed subsets of the real boxes $\{z\in \Rset_+^{E_v}\colon z\le b\rest{E_v}\}$ for some $b\in \Rset_+^E$. Also one assumes that each choice function $C_v$ ($v\in V$) acting on $\Bscr_v$ is continuous and obeys axioms as above. So our model of stable g-matchings is an ``integer version'' of that; we denote it as \emph{SGMM}. General results in~\cite{AG} imply that the set $\Sscr$ of stable g-matchings in SGMM possesses the following nice properties that will be important to us in what follows:
 \begin{numitem1} \label{eq:AG}
 \begin{itemize}
\item[(a)] $\Sscr$ is nonempty, and  $(\Sscr,\prec_F)$ is a
\emph{distributive} lattice (cf.~\cite[Ths.~1,8]{AG});
\item[(b)]
(\emph{polarity}): $\prec_F$ is opposite to 
$\prec_W$ on $\Sscr$; namely: for $x,y\in\Sscr$, if  $x_f\prec_f y_f$ for all $f\in F$, then $y_w\prec_w x_w$ for all $w\in W$, and vice versa (cf.~\cite[Cor.~2]{AG});
\item[(c)]
(\emph{unisizeness}): for each vertex $v\in V$, the size $|x_v|$ is
the same for all stable g-matchings $x\in \Sscr$ (cf.~\cite[Th.~6]{AG}).
 \end{itemize}
   \end{numitem1}

We denote the minimal and maximal elements in the lattice $(\Sscr,\prec_F)$ by
$\xmin$ and $\xmax$, respectively; then the former is the best and the latter
is the worst for the part $W$, in view of the polarity~(2.3)(b).

In what follows, when no confuse can arise, we may write $\prec$ for
$\prec_F$.   Also for $v\in V$ and $e\in E_v$, we write $\onebf^e_v$ for the
unit base vector of $e$ in $\Rset^{E_v}$ (taking value 1 on $e$, and 0
otherwise). We often will omit the term $v$ if it is clear from the context, writing $\onebf^e$ for $\onebf^e_v$.


 \section{Active graph and rotations} \label{sec:act-rot}

Throughout this section, we fix a stable g-matching $x\in\Sscr$ different
from $\xmax$. We are interested in characterizing the set $\Sscr_x$ of stable g-matchings $x'$ that are \emph{close} to $x$ and satisfy $x'\succ_F x$. This means that $x$ \emph{immediately precedes} $x'$ in the lattice $(\Sscr,\prec_F)$. We shall show that the set $\Sscr_x$ has cardinality $O(|E|)$ and can be found efficiently. To this aim, we construct the so-called \emph{active graph} and then extract in it special cycles called \emph{rotations}. 

We call an edge $e\in E$ \emph{saturated} (by $x$) if $x(e)=b(e)$, and unsaturated otherwise.

For our purposes some equivalent definitions of ``interesting'' edges will be of use.
 \begin{lemma} \label{lm:e_interest}
 
Let $v\in V$,  $e\in E_v$, $z\in\Ascr_v$, and $z(e)<b(e)$. The following are equivalent:

{\rm(i)} $e$ is interesting for $v$ under $z$;

{\rm(ii)} there is $\tilde z\in\Ascr_v$ such that $\tilde z\succ_v z$ and $\tilde z(e)>z(e)$;

{\rm(iii)} $C_v(z+\onebf^e)\ne z$ (where $\onebf^\bullet$ stands for $\onebf^\bullet_{v}$);

{\rm(iv)} $C_v(z+\onebf^e)$ is either {\rm(a)} $z+\onebf^e$, or {\rm(b)} $z+\onebf^e-\onebf^{e'}$ for some $e'\in E_v-\{e\}$.
  \end{lemma}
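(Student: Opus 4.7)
My plan is to prove the four implications by closing the cycle (iii)$\Rightarrow$(iv)$\Rightarrow$(ii)$\Rightarrow$(iii) and separately establishing (i)$\Leftrightarrow$(iii); this yields all equivalences. The direction (iv)$\Rightarrow$(iii) is a trivial observation, since in both subcases $C_v(z+\onebf^e)$ takes the value $z(e)+1$ at $e$ and hence differs from $z$. The substance of the lemma lies in the implication (iii)$\Rightarrow$(iv) and in linking each of (i), (ii) with (iii) via substitutability (A2).

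For the heart of the proof, (iii)$\Rightarrow$(iv), I would write $y := C_v(z+\onebf^e)$, note $y \le z+\onebf^e$, and observe that size monotonicity (A3) together with $|C_v(z)|=|z|$ gives $|y| \in \{|z|,|z|+1\}$. The key sub-claim is that $y(e)=z(e)+1$. Indeed, if $y(e) \le z(e)$, then $y \le z$, so the chain $z+\onebf^e \ge z \ge y$ combined with consistency (A1) yields $C_v(z)=C_v(z+\onebf^e)=y$; but $C_v(z)=z$, contradicting (iii). Once $y(e)=z(e)+1$ is settled, the two admissible sizes pin $y$ down: $|y|=|z|+1$ forces $y=z+\onebf^e$, which is case (a); $|y|=|z|$ forces $y$ to differ from $z+\onebf^e$ by a unit decrement at some coordinate $e'\neq e$, which is case (b).

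The equivalence (i)$\Leftrightarrow$(iii) is a short corollary of the same circle of ideas. For (iii)$\Rightarrow$(i) I would take $z':=z+\onebf^e$; the sub-claim just proved gives $C_v(z')(e)=z(e)+1>z(e)$, fulfilling Definition~1. For (i)$\Rightarrow$(iii), I would argue by contradiction: suppose $C_v(z+\onebf^e)=z$ and let $z'$ be as in (i). Since $z'$ agrees with $z$ off $e$ and strictly exceeds $z(e)$ at $e$, one has $z' \ge z+\onebf^e$; applying (A2) then yields $C_v(z') \wedge (z+\onebf^e) \le C_v(z+\onebf^e)=z$, and reading at the coordinate $e$ forces $C_v(z')(e) \le z(e)$, contradicting (i).

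The bridge between (iv) and (ii) runs along similar lines. For (iv)$\Rightarrow$(ii), I would set $\tilde z:=C_v(z+\onebf^e)$ and check that $\tilde z \vee z=z+\onebf^e$ in both subcases (in (b) note $\tilde z(e')=z(e')-1 \le z(e')$); then $C_v(\tilde z \vee z)=\tilde z$ gives $\tilde z \succ_v z$, while $\tilde z(e)=z(e)+1>z(e)$. For (ii)$\Rightarrow$(iii), I would use $\tilde z \vee z \ge z+\onebf^e$ and $C_v(\tilde z \vee z)=\tilde z$ together with (A2), obtaining $\tilde z \wedge (z+\onebf^e) \le C_v(z+\onebf^e)$; reading at $e$ yields $C_v(z+\onebf^e)(e) \ge z(e)+1$, which excludes $C_v(z+\onebf^e)=z$. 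The main technical hurdle I anticipate is the consistency step in (iii)$\Rightarrow$(iv); the remaining implications are routine applications of (A2) once the right pairs of vectors are chosen.
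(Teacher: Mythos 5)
Your proof is correct, and it reorganizes the implications in a way that departs mildly but genuinely from the paper's linear cycle (i)$\to$(ii)$\to$(iii)$\to$(iv)$\to$(i). The most notable substantive difference is in how (i) is tied to the rest: the paper proves (i)$\to$(ii) by appealing to stationarity (property~(2.1), itself a consequence of (A1)+(A2)), whereas you bypass stationarity entirely. You establish (iii)$\to$(i) as a byproduct of the sub-claim $C_v(z+\onebf^e)(e)=z(e)+1$ proved inside (iii)$\to$(iv), and you establish (i)$\to$(iii) by a direct application of (A2) to the pair $z'\ge z+\onebf^e$; both steps are clean and arguably more elementary. In (iii)$\to$(iv), you deploy consistency (A1) to rule out $y(e)\le z(e)$, where the paper reaches the same conclusion by the counting constraints $y\le z+\onebf^e$, $y\ne z$, $|y|\ge|z|$ alone; the (A1) route is a valid alternative, though strictly speaking the (A3) bound $|y|\ge|z|$ already suffices there. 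One small point worth making explicit in a final write-up: in (iv)$\to$(ii) you should note that $\tilde z:=C_v(z+\onebf^e)$ lies in $\Ascr_v$, which follows from the idempotence $C_v\circ C_v=C_v$ implied by (A1); without this, $\tilde z$ is not eligible as a witness in (ii). This omission is shared by the paper's own proof and is routine, but it should be stated.
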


  \begin{proof}
(i)$\to$(ii). Let $e$ be interesting for $v$ under $z$ and let $z'$ be as in~\refeq{inter_e}. Define $\tilde z:=C(z')$, where $C:=C_v$. Then $\tilde z(e)>z(e)$. Since $z'>z$ implies $z'\vee z=z'$, we have: 
   $$
   \tilde z=C(z')=C(z'\vee z)=C(C(z')\vee z)=C(\tilde z\vee z),
   $$
(using~\refeq{plott}), whence $\tilde z\succ_v z$. 

(ii)$\to$(iii). Let $z':=z+\onebf^e$ and let $\tilde z$ be as in~(ii). Then $\tilde z\vee z\ge z'$. Applying~(A2), we have $C(\tilde z\vee z)(e)\wedge z'(e)\le C(z')(e)$. This implies $C(z')(e)>z(e)$ (in view of $C(\tilde z\vee z)=\tilde z$). So $C(z')\ne z$.

(iii)$\to$(iv).  Let $z':=z+\onebf^e$. Then the relations $C(z')\le z'$, $C(z')\ne z$, $|C(z')|\ge |z|$ (by~(A3) for $z'>z$), and the integrality of $C(z')$ imply that only two cases for $C(z')$ are possible, namely, those pointed out in~(iv).

(iv)$\to$(i). Taking $z':=z+\onebf^e$ and using~(iv), we obtain~\refeq{inter_e}.
  \end{proof}

As to ``non-interesting'' edges, we will use the following:
  \begin{numitem1} \label{eq:e_noninterest}
for $v\in V$ and $z\in\Ascr_v$, if an edge $e\in E_v$ is not interesting for $v$ under $z$ and if $\eps\in\Zset_+$ is such that $z(e)+\eps\le b(e)$, then $C_v(z+\eps\onebf^e)=z$. 
  \end{numitem1}
This is immediate for $\eps=0$. For $\eps=1$, we appeal to the equivalence of~(i) and~(iii) in Lemma~\ref{lm:e_interest}.
And for $\eps\ge 1$, taking  $z':=z+\onebf^e$ and $z'':=z+\eps\onebf^e$ and applying~(A2) to $z''\ge z'$, we have $C_v(z'')\wedge z'\le C_v(z')=z$. Then $C_v(z'')(e)\le z(e)$, whence $C_v(z'')=z$.


\subsection{Active graph} \label{ssec:act_graph}

The rotations for $x$ are extracted from a certain subgraph of $G$, called the \emph{active graph} related to $x$. To construct it, we need some definitions and explanations.
 \medskip
 
\noindent\textbf{Definition 2.}
For $f\in F$, define $U_f^+=U_f^+(x)$ to be the set of edges in $E_v$ interesting for $f$ under $x$; in particular, these edges are unsaturated. Also define $U_f^-=U_f^-(x)$ to be the set of edges $e\in E_f$ with $x(e)>0$ that are not interesting for $f$ under $x$. The union of sets $U_f^+$ ($U_f^-$) over all ``firms'' $f\in F$ is denoted by $U^+_F=U^+_F(x)$ (resp. $U^-_F=U^-_F(x)$). By Lemma~\ref{lm:e_interest}(iv), for an edge $a=wf\in U_f^+$, the vector $C_f (x_f+\onebf^a)$ is either $x_f+\onebf^a$ or $x_f+\onebf^a-\onebf^c$ for some $c\in E_f-\{a\}$ (where $\onebf^\bullet$ stands for $\onebf^\bullet_{f}$). In the latter case, we call $(a,c)$ a \emph{legal $f$-pair} (under $x$), or a \emph{legal $F$-pair passing $f$}.
 \medskip

Note that there may be several $f$-legal pairs with the same ``negative'' element, i.e. legal $f$-pairs $(a,c)$ and $(a',c')$ with $c=c'$ and $a\ne a'$ are possible.
  \begin{lemma} \label{lm:acd}
For $f\in F$, let $(a,c)$ be a legal $f$-pairs. Then {\rm(i)}~$c\in U_f^-(x)$ and {\rm(ii)}~$x_f\prec_f x_f+\onebf^a-\onebf^c$. Also {\rm(iii)}~$c$ is not interesting for $f$ under $x_f+\onebf^a-\onebf^c$.
  \end{lemma}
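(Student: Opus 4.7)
The plan is to dispatch parts (ii) and (iii) by direct calculation and then tackle the harder part (i) via a size-counting argument.

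For (ii), write $y := x_f + \onebf^a - \onebf^c$ and observe that $x_f \vee y = x_f + \onebf^a$ (since $y$ agrees with $x_f$ off $\{a,c\}$, exceeds it at $a$, and falls below it at $c$); hence $C_f(x_f \vee y) = C_f(x_f + \onebf^a) = y$ by the very definition of a legal $f$-pair, which is precisely $x_f \prec_f y$ in the sense of~\refeq{zzp}.

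For (iii), note that $y + \onebf^c = x_f + \onebf^a$, so $C_f(y + \onebf^c) = y$; since $y(c) = x(c) - 1 < b(c)$, the equivalence of items (i) and (iii) in Lemma~\ref{lm:e_interest}, applied at $y$ and $c$, yields that $c$ is not interesting for $f$ under $y$.

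For (i), the bound $x(c) > 0$ is immediate from the nonnegativity $C_f(x_f + \onebf^a)(c) = x(c) - 1 \ge 0$. The substantive claim is that $c$ is not interesting for $f$ under $x$. The case $x(c) = b(c)$ is trivial, so I assume $x(c) < b(c)$, and consider the auxiliary vector $z := x_f + \onebf^a + \onebf^c \in \Bscr_f$ together with $w := C_f(x_f + \onebf^c)$. Two applications of substitutability (A2) are the engine: applied to $z \ge x_f + \onebf^a$ it gives $C_f(z) \wedge (x_f + \onebf^a) \le C_f(x_f + \onebf^a) = y$, which at coordinate $c$ forces $C_f(z)(c) \le x(c) - 1$; applied to $z \ge x_f + \onebf^c$ it gives $C_f(z) \wedge (x_f + \onebf^c) \le w$. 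Now suppose, for contradiction, that $c$ is interesting under $x$; by Lemma~\ref{lm:e_interest}(iv), $w$ equals either $x_f + \onebf^c$ or $x_f + \onebf^c - \onebf^{e^*}$ for some $e^* \ne c$. In each shape of $w$, the second application yields, together with the trivial bound $C_f(z) \le z$, an upper estimate on $|C_f(z)|$ that falls strictly below $|w|$ (by $1$ when $w = x_f + \onebf^c$; by $1$ when $e^* \ne a,c$; by $2$ when $e^* = a$, exploiting the extra coordinate bound $C_f(z)(a) \le x(a)-1$ forced at $a$). This contradicts the size-monotonicity inequality $|C_f(z)| \ge |C_f(x_f + \onebf^c)| = |w|$ supplied by (A3), completing the proof.

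The main obstacle is exactly this case analysis in (i). Stationarity~\refeq{plott} alone collapses into tautologies when one reasons directly about $C_f(x_f + \onebf^c)$, so the key manoeuvre is to introduce the three-coordinate auxiliary vector $x_f + \onebf^a + \onebf^c$, coupling the ``known'' increment at $a$ with the ``speculated'' one at $c$, and then to extract quantitative information through a careful interplay of (A2) and (A3) across the three sub-cases for $w$; the sub-case $e^* = a$ is the trickiest because the negative coordinate falls back on $a$ and changes the bookkeeping.
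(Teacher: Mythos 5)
Your proposal is correct and follows essentially the same route as the paper: for part (i) it introduces the same auxiliary vector $x_f+\onebf^a+\onebf^c$, applies substitutability (A2) to it twice, and derives a contradiction with size-monotonicity (A3); parts (ii) and (iii) are handled by the same direct computations the paper uses. Your version is marginally cleaner in that it uniformly compares $|C_f(z)|$ to $|C_f(x_f+\onebf^c)|$ and explicitly isolates the subcase $e^*=a$, which the paper treats only implicitly, but the underlying argument is identical.
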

\begin{proof}
~Clearly $x(c)>0$. Suppose, for a contradiction, that $c$ is interesting for $f$ under $x$. Then, by Lemma~\ref{lm:e_interest}(iv), $C(x_f+\onebf^c)$ is either (a) $x_f+\onebf^{c}$, or (b) $x_f+\onebf^{c}-\onebf^{d}$ for some $d\in E_f-\{c\}$, where $C:=C_f$ and $\onebf^\bullet$ stands for $\onebf^\bullet_{f}$. 

In case~(b), take the vectors $z:=x_f+\onebf^{a}$, $z':=x_f+\onebf^{c}$, and $y:=x_f+\onebf^{a}+\onebf^{c}$. Then $C(z)=x_f+\onebf^{a}-\onebf^{c}$ and $C(z')=x_f+\onebf^{c}-\onebf^{d}$. By axiom~(A2) applied to $y>z$ and $y>z'$, we have $C(y)\wedge z\le C(z)$ and $C(y)\wedge z'\le C(z')$. 

It follows that $C(y)(c)\le x(c)-1$ and $C(y)(d)\le x(d)-1$. Also $C(y)(a)\le x(a)+1$ and $C(y)(e)\le x(e)$ for all $e\ne a,c,d$. Therefore, $|C(y)|\le |x_f|-1$. But then $|C(y)|<|x_f|=|C(z)|$, contradicting axiom~(A3) applied to $y>z$.

And in case~(a), for $z,y$ as above, we obtain from $C(y)\wedge z\le C(z)=x_f+\onebf^a-\onebf^c$ that $|C(y)|\le |x_f|$. But $|C(x_f+\onebf^c)|=|x_f|+1$, contradicting~(A3) for $y>x_f+\onebf^c$. 

Next, (ii) in the lemma follows from $C(x_f\vee (x_f+\onebf^a-\onebf^c))=C(x_f+\onebf^a)=x_f+\onebf^a-\onebf^c$. And~(iii) follows from $C(x_f+\onebf^a-\onebf^c+\onebf^c)=C(x_f+\onebf^a)=x_f+\onebf^a-\onebf^c$.
 \end{proof}

Now consider vertices in $W$. We involve certain edges in legal pairs for ``workers'';  their construction is somewhat different from that of legal pairs for ``firms''. 
 \medskip
 
\noindent\textbf{Definition 3.}
Let $c=wf\in U^-_F$ (where $w\in W$). Consider an edge in $a\in E_w\cap U^+_F$; then $x(a)<b(a)$. If the vector $z:=x_w+\onebf^a_w-\onebf^c_w$ is acceptable, i.e. $C_w(z)=z$, then we say that $(c,a)$ forms a \emph{legal $w$-pair}, or a \emph{legal $W$-pair passing $w$}. 
\medskip

Such a pair $(c,a)$ possesses a number of useful properties. First of all, since $x$ is stable and the edge $a=wf$ is interesting for $f$ under $x$, this edge is not interesting for $w$, i.e. $C_w(x_w+\onebf^a)=x_w$, letting $\onebf^\bullet=\onebf_w^\bullet$. Note also that
\begin{numitem1} \label{eq:leg_w}
for a legal $w$-pair $(c,a)$, the vector $z:=x_w+\onebf^a-\onebf^c$ satisfies: (a) $c$ is interesting for $w$ under $z$, and (b) $z\prec_w x_w$.
  \end{numitem1}
Indeed, we have $C_w(z+\onebf^c)=C_w(x_w+\onebf^a)=x_w=z+\onebf^c-\onebf^a$, yielding~(a). And $C_w(z\vee x_w)=C_w(x_w+\onebf^a)=x_w$ implies~(b).
\smallskip

Now fix a legal $w$-pair $(c,a)$ and consider an edge $d\in U^+_F\cap E_w$ different from $a$. It is not interesting for $w$ under $x$ (similar to $a$). The next fact is of importance.
 \begin{lemma} \label{lm:essent_pair}
Let $d\in U^+_F\cap E_w-\{a\}$ be interesting for $w$ under $z:=x_w+\onebf^a-\onebf^c$. Then $(c,d)$ is a legal $w$-pair and $(x_w+\onebf^d-\onebf^c)\succ_w z$.
  \end{lemma}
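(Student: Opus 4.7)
The plan is to compute $C_w(z+\onebf^d)$ explicitly and show it equals $x_w+\onebf^d-\onebf^c$; both assertions of the lemma will then fall out immediately. Throughout, let $y:=x_w+\onebf^a+\onebf^d$ and write $C$ for $C_w$.

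First I would record the two ``baseline'' identities available from the stability of $x$. Since $a,d\in U_F^+$ and $x$ is stable, neither $a$ nor $d$ is interesting for $w$ under $x_w$, so by the characterization \refeq{e_noninterest} (equivalently, by Lemma~\ref{lm:e_interest}(iii)) one has $C(x_w+\onebf^a)=x_w$ and $C(x_w+\onebf^d)=x_w$. Applying stationarity~\refeq{plott} to the pair $(x_w+\onebf^a,\,x_w+\onebf^d)$ gives
\[
C(y)=C\bigl((x_w+\onebf^a)\vee(x_w+\onebf^d)\bigr)=C\bigl(C(x_w+\onebf^a)\vee(x_w+\onebf^d)\bigr)=C(x_w+\onebf^d)=x_w.
\]

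Next I would pin down $C(z+\onebf^d)$ coordinate by coordinate. Since $y\ge z+\onebf^d$, axiom~(A2) yields $C(y)\wedge(z+\onebf^d)\le C(z+\onebf^d)$, i.e.\ $x_w\wedge(z+\onebf^d)\le C(z+\onebf^d)$. A direct inspection shows $x_w\wedge(z+\onebf^d)=x_w-\onebf^c$, so combined with $C(z+\onebf^d)\le z+\onebf^d=x_w+\onebf^a-\onebf^c+\onebf^d$ we conclude
\[
C(z+\onebf^d)=x_w-\onebf^c+\alpha\onebf^a+\beta\onebf^d\qquad\text{for some }\alpha,\beta\in\{0,1\}.
\]

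Now I would eliminate the unwanted values of $(\alpha,\beta)$. Since $d$ is interesting for $w$ under $z$, Lemma~\ref{lm:e_interest}(iii) gives $C(z+\onebf^d)\ne z$, ruling out $(\alpha,\beta)=(1,0)$; and (A3) applied to $z+\onebf^d>z$ yields $|C(z+\onebf^d)|\ge|z|=|x_w|$, i.e.\ $\alpha+\beta\ge 1$. The remaining case to exclude is $(\alpha,\beta)=(1,1)$: here $|C(z+\onebf^d)|=|x_w|+1$, but (A3) applied to $y\ge z+\onebf^d$ forces $|C(y)|\ge|C(z+\onebf^d)|$, contradicting $|C(y)|=|x_w|$. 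So $(\alpha,\beta)=(0,1)$ and $C(z+\onebf^d)=x_w+\onebf^d-\onebf^c$.

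Finally I would harvest the two conclusions. Consistency~(A1) applied to $C(C(z+\onebf^d))$ gives $C(x_w+\onebf^d-\onebf^c)=x_w+\onebf^d-\onebf^c$, so this vector is acceptable, proving $(c,d)$ is a legal $w$-pair (the other requirements $c\in U_F^-$ and $d\in E_w\cap U_F^+$ are given). For the preference claim, since $z\vee(x_w+\onebf^d-\onebf^c)=z+\onebf^d$ and $C(z+\onebf^d)=x_w+\onebf^d-\onebf^c$, the definition~\refeq{zzp} of $\succ_w$ gives $(x_w+\onebf^d-\onebf^c)\succ_w z$. The only delicate point I expect is the size bookkeeping that rules out the ``$\alpha=\beta=1$'' branch — this is exactly where the size-monotonicity axiom (A3), rather than (A1) or (A2) alone, is indispensable.
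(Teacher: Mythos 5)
Your proof is correct and follows essentially the same line as the paper: establish $C_w(x_w+\onebf^a+\onebf^d)=x_w$, then use (A2) and (A3) to force $C_w(z+\onebf^d)=x_w+\onebf^d-\onebf^c$, from which both conclusions follow. The one small stylistic improvement is your use of stationarity \refeq{plott} to get $C_w(x_w+\onebf^a+\onebf^d)=x_w$ in a single line, whereas the paper derives it from (A2) and (A3) directly; the remainder of the case elimination is the same argument under a slightly different parametrization.
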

  \begin{proof}
~Let $C:=C_w$. Since $d$ is interesting under $z$, the vector $C(z+\onebf^d)$ is equal to one of the following: (a) $z+\onebf^d$; (b) $z+\onebf^d-\onebf^{d'}$ for some $d'\ne a$; or (c) $z+\onebf^d-\onebf^a$.

Note that $C(x_w+\onebf^a)=x_w$ and $C(x_w+\onebf^d)=x_w$ imply that the vector $\hat z:=x_w+\onebf^a+\onebf^d$ satisfies $C(\hat z)=x_w$. (For $\hat z>x_w+\onebf^a$ implies (by~(A2)) that $C(\hat z)\wedge(x_w+\onebf^a)\le C(x_w+\onebf^a)=x_w$, whence $C(\hat z)(a)\le x_w(a)$. Similarly, $C(\hat z)(d)\le x_w(d)$, and now $C(\hat z)=x_w$ follows from~(A3) for $\hat z>x_w$.)

In case~(a), $\hat z>z+\onebf^d$ implies $|C(\hat z)|\ge|C(z+\onebf^d)|$, by~(A3). But $|C(\hat z)|=|x_w|$ and $|C(z+\onebf^d)|= |z+\onebf^d|=|x_w|+1$; a contradiction.

In case~(b), $\hat z>z+\onebf^d$ and $C(z+\onebf^d)=z+\onebf^d-\onebf^{d'}$ imply $C(\hat z)\wedge(z+\onebf^d)\le z+\onebf^d-\onebf^{d'}$, by~(A2). But $C(\hat z)(d')=x_w(d')\ge z(d')$ and $(z+\onebf^d)(d')=z(d')$, whereas $(z+\onebf^d-\onebf^{d'})(d')<z(d')$ (since $d'\ne a,d$); a contradiction.

Finally, in case (c), we have 
$$
C(z+\onebf^d)=z+\onebf^d-\onebf^a=(x_w+\onebf^a-\onebf^c)+\onebf^d-\onebf^a
=x_w+\onebf^d-\onebf^c,
$$
implying that $(c,d)$ is a legal $w$-pair. Also $C(z\vee(x_w+\onebf^d-\onebf^c))=C(z+\onebf^d)=x_w+\onebf^d-\onebf^c$, whence $(x_w+\onebf^d-\onebf^c)\succ_w z$, as required.  
  \end{proof}

For $w\in W$ and $c\in E_w\cap U^-_F$, let $U^+[c]$ denote the set of edges $a\in U^+_F\cup E_w$ such that $(c,a)$ is a legal $w$-pair. Preference relations on such pairs established in Lemma~\ref{lm:essent_pair} provides us with the following important property:
  \begin{numitem1}\label{eq:U+a}
for $c$ as above, if $U^+[c]\ne\emptyset$, then there exists a legal $w$-pair $(c,a)$ such that no $d\in U^+_F\cap E_w-\{a\}$ is interesting for $w$ under $x_w+\onebf^a-\onebf^c$; moreover, such a pair $(c,a)$ is \emph{unique}.
  \end{numitem1}
Here the uniqueness follows from the equality $C_w(x_w+\onebf^a-\onebf^c+\onebf^d)=x_w+\onebf^a-\onebf^c$ for each $d\in U^+[c]-\{a\}$, which implies that $a$ is interesting for $w$ under $x_w+\onebf^d-\onebf^c$. 
\medskip

\noindent\textbf{Definition 4.} The legal $w$-pair $(c,a)$ as in~\refeq{U+a} is called \emph{essential} for $c$ under $x$. 
 \medskip

Note that $a\in U^+_F\cap E_w$ may belong to several essential pairs, i.e. essential (legal) $w$-pairs $(c,a)$ and $(c',a')$ with $c\ne c'$ and $a=a'$ are possible.
Using the above observations, we now form an auxiliary directed graph $\Dscr=\Dscr(x)$ as follows:
  \begin{numitem1} \label{eq:Dscr}
  \begin{itemize}
\item[(a)] 
each edge $a=wf\in U^+_F$ generates two vertices $w^a$ and $f^a$ and one directed edge $e^a=(w^a,f^a)$ (from $w^a$ to $f^a$) in $\Dscr$;
 \item[(b)] each edge $c=wf\in U^-_F$ generates vertices $w^c$ and $f^c$ and edge $e^c=(f^c,w^c)$;
 \item[(c)] for $f\in F$, each legal $f$-pair $(a,c)$ generates edge $(f^a,f^c)$;
 \item[(d)] for $w\in W$, each essential $w$-pair $(c,a)$ generates edge $(w^c,w^a)$.
  \end{itemize}
  \end{numitem1}
  
An important property of $\Dscr$ is that 
each vertex of $\Dscr$ has at most one leaving edge.

Indeed: (i) for $f\in F$ and $c\in E_f\cap U^-_F$, the only leaving edge for $f^c$ is $e^c$; (ii) for $w\in W$ and $c\in E_w\cap U^-_F$, the vertex $w^c$ has no leaving edge if $U^+[c]=\emptyset$, and has the only leaving edge $(w^c,w^a)$ otherwise, where $(c,a)$ is the essential $w$-pair for $c$; (iii) for $w\in W$ and $a\in E_w\cap U^+_F$, the only leaving edge for $w^a$ is $e^a$; and (iv) for $f\in F$ and $a\in E_f\cap U^+_F$, the vertex $f^a$ has the only leaving edge $(f^a,f^c)$ if $a$ occurs in the legal $f$-pair $(a,c)$, and has no leaving edge otherwise (when $C_f(x_f+\onebf^a)=x_f+\onebf^a$).
 \smallskip
 
We apply to $\Dscr$ the following
  \begin{description}
\item[\emph{Cleaning procedure}:]
It scans the vertices of $\Dscr$ and updates $\Dscr$ step by step. When scanning a vertex $v$ of $\Dscr$, if we observe that $v$ has no entering edge, then $v$ is deleted from $\Dscr$ (together with the edge leaving $v$ if exists). Along the way, we remove from $\Dscr$ each isolated (zero degree) vertex whenever it appears. Repeat the procedure with the updated $\Dscr$, and so on, until  $\Dscr$ stabilizes.
  \end{description}
  
Upon termination, if the final $\Dscr$ is nonempty, each vertex in it has an entering edge. This and the fact that each vertex of $\Dscr$ has at most one leaving edge imply that each component of $\Dscr$ is a simple directed cycle. 
 \medskip
 
\noindent\textbf{Definition 5.}
We associate to the final $\Dscr$ its natural image $\Gamma=\Gamma(x)$ in $G$, which is a subgraph of $G$ with a $\pm 1$ labeling on the edges, defined as follows: an edge $(w^c,w^a)$  (resp. $(f^a,f^c)$) of $\Dscr$ corresponds to the vertex $w$ (resp. $f$) of $\Gamma$; an edge $e^a$ of $\Dscr$, where $a=wf\in U^+_F$, corresponds to the edge $a$ labeled $+1$ and called \emph{positive}; and an edge $e^c$, where $c=wf\in U^-_F$, corresponds to the edge $c$ labeled $-1$ and called \emph{negative}. We call $\Gamma$ the \emph{active graph} for $x$. The edges of $\Gamma$ are endowed with the corresponding pairing formed by legal $F$-pairs $(a,c)$ and essential $W$-pairs $(c,a)$ used in $D$, that we call \emph{tandems} in $G$. The sets of vertices and edges of $\Gamma$ are denoted by $V_\Gamma$ and $E_\Gamma$, respectively. We set $W_\Gamma:=V_\Gamma\cap W$ and $F_\Gamma:=V_\Gamma\cap F$.
\medskip

The active graph $\Gamma=\Gamma(x)$ is decomposed into pairwise edge disjoint cycles that are images of the components (simple cycles) of $\Dscr$, where the positive and negative edges alternate. Equivalently, each cycle $R=(v_0,e_1,v_1,\ldots,e_k,v_k=v_0)$ of this sort in $\Gamma$ is determined by the tandem relations, namely: for each $i=1,\ldots,k$, the pair $(e_i,e_{i+1})$ forms a tandem, letting $e_{k+1}:=e_1$. Although all edges in $R$ are different, $R$ may be self-intersecting in vertices, i.e. $R$ is edge-simple but not necessarily simple. Also a vertex of $\Gamma$ may be shared by several such cycles.  Depending on the context,  the cycle $R$ may also be regarded as a graph and denoted as $R=(V_R,E_R)$.

Let $\Rscr=\Rscr(x)$ be the set of above-mentioned cycles in $\Gamma$; they are just what we call the \emph{rotations} applicable to $x$. Since the rotations are pairwise edge disjoint, $|\Rscr|\le |E|/4$. For each $R\in\Rscr$, define $R^+$ and $R^-$ to be the sets of positive and negative edges in $R$, respectively. We denote by $\chi^R$ the corresponding incidence $0,\pm 1$ vector in $\Rset^E_+$, taking value 1 for each $e\in R^+$, $-1$ for each $e\in R^-$, and 0 otherwise. We also may write $\chi^{R^+}$ and $\chi^{R^-}$ for the corresponding $0,1$ vectors (so that $\chi^R=\chi^{R^+}-\chi^{R^-}$).


\subsection{Key properties of rotations} \label{ssec:key_prop}

They are exhibited in the following two propositions.
 \begin{prop} \label{pr:xxp}
~For each rotation $R\in\Rscr(x)$, the g-matching $x':=x+\chi^R$ is
stable and satisfies $x\prec_F x'$.
 \end{prop}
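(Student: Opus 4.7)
The plan is to verify three things in turn: $x' \in \Ascr$, $x \prec_F x'$, and $x'$ is stable. For acceptability at $v \notin V_R$, $x'_v = x_v$. For $f \in F_R$ visited once by $R$ with tandem $(a,c)$, the equation $x'_f = C_f(x_f + \onebf^a)$ from the definition of a legal $f$-pair and the idempotence $C_f \circ C_f = C_f$ (from axiom (A1)) give $C_f(x'_f) = x'_f$; if $f$ is visited several times by $R$, the same conclusion follows by combining the legal-pair equations via (A2) and (A3), which pin down $C_f\bigl(x_f + \sum_i \onebf^{a_i}\bigr) = x_f + \sum_i (\onebf^{a_i} - \onebf^{c_i})$. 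At $w \in W_R$ acceptability of $x'_w$ is built into the definition of an essential pair (with an analogous combination when $w$ is multi-visited). For the comparison, Lemma~\ref{lm:acd}(ii) gives $x_f \prec_f x'_f$ at each $f \in F_R$ and equality otherwise; since the cycle $R$ in the bipartite graph $G$ meets $F$, we have $x \ne x'$ and so $x \prec_F x'$.

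For stability I would argue by contradiction: assume some $e = wf$ blocks $x'$ and split on the location of $e$ relative to $R$. If $e \in R^-$, then at $f$ the edge $e$ is the negative element of the legal $f$-pair $(a,e)$ used in $R$, and Lemma~\ref{lm:acd}(iii) says exactly that $e$ is not interesting for $f$ under $x'_f$, contradicting the hypothesis. If $e \notin E_R$, I first pull interest in $e$ back from $x'$ to $x$ on the firm side: this is immediate when $f \notin F_R$, and when $f \in F_R$ with tandem $(a,c)$ I assume toward contradiction that $C_f(x_f+\onebf^e) = x_f$; stationarity~\refeq{plott} then yields $C_f(x_f+\onebf^a+\onebf^e) = C_f(x_f+\onebf^a) = x'_f$, and the chain $x_f+\onebf^a+\onebf^e \ge x'_f+\onebf^e \ge x'_f$ lets me invoke (A1) to force $C_f(x'_f+\onebf^e) = x'_f$, contradicting that $e$ is interesting for $f$ under $x'_f$. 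Hence $e \in U^+_F(x)$. If $w \notin W_R$ then $x'_w = x_w$ and $e$ also blocks $x$, contradicting $x \in \Sscr$; if $w \in W_R$ with essential pair $(c',a')$ then $e \in (U^+_F \cap E_w) \setminus \{a'\}$ (using $e \notin E_R$ to ensure $e \ne a'$), and property~\refeq{U+a} directly forbids interest in $e$ for $w$ under $x'_w$, again a contradiction.

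The main obstacle is the remaining case $e \in R^+$: here $e$ is the positive edge of the tandems at \emph{both} endpoints, and none of Lemma~\ref{lm:acd},~\refeq{leg_w},~\refeq{U+a} rules out interest in $e$ itself. I would work at the worker side: let $(c',e)$ be the essential $w$-pair, so $x'_w = x_w + \onebf^e - \onebf^{c'}$. Since $e \in U^+_F(x)$ and $x$ is stable, $e$ is not interesting for $w$ under $x$, so property~\refeq{e_noninterest} applied with $\eps = 2$ (valid because $x(e)+2 \le b(e)$ follows from $e$ being interesting for $w$ under $x'$) gives $C_w(x_w + 2\onebf^e) = x_w$. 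Set $z := x'_w + \onebf^e$ and $y := x_w + 2\onebf^e$, so $y - z = \onebf^{c'}$. Axiom (A2) applied to $y \ge z$ yields $C_w(z) \ge x_w \wedge z = x_w - \onebf^{c'}$; axiom (A3) applied to $y \ge z$ and to $z \ge x'_w$ pins down $|C_w(z)| = |x_w|$; and the upper bound $C_w(z) \le z$ then forces $C_w(z)(c') = x(c')-1$ and $C_w(z)(e) = x(e)+1$, giving $C_w(z) = x'_w$, which contradicts $e$ being interesting for $w$ under $x'_w$. This closes the last case and completes the proof.
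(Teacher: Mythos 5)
Your proof follows the same overall structure as the paper's (acceptability, preference, then stability by cases on the location of a hypothetical blocking edge $e$), and most of the steps are sound. The main place you diverge is the case $e \in R^+$, which you flag as ``the main obstacle'' and handle with a hands-on computation. This is where there is a genuine gap.

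Your argument for $e \in R^+$ sets $z := x'_w + \onebf^e$ and $y := x_w + 2\onebf^e$ and requires $y \ge z$ so that (A2) applies. This works only when $w$ is visited exactly once by $R$, so that $x'_w = x_w + \onebf^e - \onebf^{c'}$ and indeed $y - z = \onebf^{c'} \ge 0$. But the paper explicitly allows rotations that are edge-simple yet not vertex-simple, and if $w$ is visited $k\ge 2$ times with essential pairs $(c(i),a(i))$, say $e = a(1)$, then $x'_w = x_w + \sum_i(\onebf^{a(i)}-\onebf^{c(i)})$ and $z(a(j)) = x(a(j))+1 > x(a(j)) = y(a(j))$ for $j>1$, so $y \not\ge z$ and (A2) is not applicable. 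The argument as written breaks down precisely in the situation the model is designed to cover. The cleanest repair is simply to invoke Corollary~\ref{cor:essent_pairs}(ii), which says that \emph{no} edge of $U^+_F(x)$ is interesting for $w$ under $z^I$ -- and since $e \in R^+_w \subseteq U^+_F(x)$ and $x'_w = z^{[k]}$, this is exactly the contradiction you want, with no further computation. (That the Corollary also covers $d = a(i)$, i.e. interest in the positive edges themselves, follows by symmetry in the proof of Lemma~\ref{lm:partW}(ii).) In fact this is what the paper does; you had the tool available and did not use it, claiming instead that $\refeq{U+a}$ ``none... rules out interest in $e$ itself'' -- a misreading of what the corollary provides.

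A secondary, less serious issue of the same flavour recurs throughout: you repeatedly reason as if vertices of $R$ are visited once (invoking Lemma~\ref{lm:acd}(iii) for the $e\in R^-$ case, and $\refeq{U+a}$ for the $e\notin E_R$, $w\in W_R$ subcase), and wave at the multi-visit case (``by combining... via (A2) and (A3)''). For acceptability this is fine because you gesture at the right machinery (Lemma~\ref{lm:acac}, Corollary~\ref{cor:essent_pairs}), but these general versions should be what you cite throughout, not their single-pair precursors. The paper's proof is careful to do exactly this. That said, your pull-back of ``$e$ interesting for $f$ under $x'$'' to ``$e$ interesting for $f$ under $x$'' in the $e\notin E_R$ case via stationarity plus (A1) is correct and essentially the same chain of identities as the paper's, just packaged slightly differently.

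Bottom line: the decomposition is right, the comparison $x\prec_F x'$ and acceptability steps are fine modulo citing the multi-visit lemmas, and the $e\in R^-$ and $e\notin E_R$ cases are correct with the same caveat. The $e\in R^+$ case, however, contains a real gap for multi-visit $w$, and the fix is to use Corollary~\ref{cor:essent_pairs}(ii) as the paper does rather than rederiving a single-pair special case by hand.
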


We say that $x'$ is obtained from $x$ \emph{by applying the rotation
$R$ with weight}~1, or \emph{by shifting along} $R$,  and denote the set of these g-matchings $x'$ over all $R\in\Rscr(x)$ by $\Sscr_x$. 
 \begin{prop} \label{pr:xpy}
Let $y\in \Sscr$ and $x\prec_F y$. Then there exists $x'\in\Sscr_x$ such that
$x'\preceq_F y$.
  \end{prop}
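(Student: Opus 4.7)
The plan is to construct a rotation $R\in\Rscr(x)$ explicitly so that $x':=x+\chi^R$ satisfies $x'\preceq_F y$; by Proposition~\ref{pr:xxp}, such an $x'$ automatically lies in $\Sscr_x$, which is what is required. Set $A:=\{e\in E:y(e)>x(e)\}$, $B:=\{e\in E:y(e)<x(e)\}$, and $A_v:=A\cap E_v$, $B_v:=B\cap E_v$. These sets are disjoint, and by unisizeness~\refeq{AG}(c) the signed mass of $y-x$ vanishes at every vertex $v$, so $A_v$ and $B_v$ are non-empty together. The core idea is to trace an alternating trail in $\Dscr(x)$ whose positive edges lie in $A$ and whose negative edges lie in $B$, and then close it into a cycle.

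\emph{Firm step.} Start at any firm $f$ with $y_f\succ_f x_f$ strictly and any $a_1\in A_f$ (both exist because $x\prec_F y$ strictly and by unisizeness). With $z:=y_f\vee x_f$ and $C_f(z)=y_f$ (from $y_f\succeq_f x_f$), the inequality $z\ge x_f+\onebf^{a_1}$ combined with (A3) and $|y_f|=|x_f|$ rules out case~(a) of Lemma~\ref{lm:e_interest}(iv) for $C_f(x_f+\onebf^{a_1})$, so $C_f(x_f+\onebf^{a_1})=x_f+\onebf^{a_1}-\onebf^{c_1}$ for some $c_1$; (A2) at coordinate $c_1$ then forces $y(c_1)\le x(c_1)-1$, so $c_1\in B_f$. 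Hence the legal $f$-pair $(a_1,c_1)$ determined by $C_f$ has $c_1\in B$.

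\emph{Worker step.} At the worker $w$ incident to $c_1$, the key auxiliary vector is $\bar z:=(y_w\vee x_w)-\onebf^{c_1}$. By polarity $C_w(y_w\vee x_w)=x_w$. The sandwich $y_w\vee x_w\ge\bar z\ge y_w$ combined with (A3) pins $|C_w(\bar z)|=|x_w|$; (A2) on $y_w\vee x_w\ge\bar z$ gives $C_w(\bar z)\ge x_w-\onebf^{c_1}$, and $C_w(\bar z)\le\bar z$ confines any pointwise excess above $x_w-\onebf^{c_1}$ to coordinates where $\bar z(e)>x(e)$, namely $A_w$. Since the total excess is exactly one unit, there is a unique $a_2\in A_w$ carrying it, so $C_w(\bar z)=x_w+\onebf^{a_2}-\onebf^{c_1}$; consistence (A1) then shows this vector is acceptable, making $(c_1,a_2)$ a legal $w$-pair with $a_2\in A_w$. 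Essentiality per~\refeq{U+a}, i.e., that no $d\in U^+_F\cap E_w-\{a_2\}$ is interesting for $w$ under $x_w+\onebf^{a_2}-\onebf^{c_1}$, follows by combining (A2) applied to $(y_w\vee x_w)+\onebf^d$ (whose $C_w$-value reduces to $x_w$ via stationarity~\refeq{plott} and the stability of $x$) with the (A3) bound $|C_w(\bar z+\onebf^d)|\le|x_w|$ from $(y_w\vee x_w)+\onebf^d\ge\bar z+\onebf^d$, followed by a short case split on whether $d\in A_w$ and whether $(c_1,d)$ is itself legal.

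\emph{Closure and verification.} Iterating the firm and worker steps, each successive edge is uniquely determined: by~\refeq{leaving} every $\Dscr$-vertex has at most one outgoing edge, and since $\Dscr(x)$ is finite the walk eventually revisits a $\Dscr$-vertex, yielding a directed cycle that survives the cleaning procedure. Its image is a rotation $R\in\Rscr(x)$ with $R^+\subseteq A$ and $R^-\subseteq B$. For $x':=x+\chi^R$, at each firm $f$ we have $(x+\chi^R)_f\vee y_f\le y_f\vee x_f$ (since every coordinate of $x+\chi^R$ on edges of $R$ is sandwiched between $x$ and $y$) and $(x+\chi^R)_f\vee y_f\ge y_f=C_f(y_f\vee x_f)$, so consistence (A1) yields $C_f((x+\chi^R)_f\vee y_f)=C_f(y_f\vee x_f)=y_f$, whence $(x+\chi^R)_f\preceq_f y_f$ for every $f$ and thus $x'\preceq_F y$. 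The main obstacle is the worker step: forcing the essential pair's positive edge to lie in $A_w$ hinges on the sandwich $y_w\vee x_w\ge\bar z\ge y_w$, which is what pins $|C_w(\bar z)|$ to $|x_w|$ and localizes the full unit of excess on a single edge of $A_w$.
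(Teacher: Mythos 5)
Your firm step is a genuine simplification of the paper's Claim~1: applying (A2) to $z:=y_f\vee x_f\ge x_f+\onebf^{a_1}$ at the coordinate of $c_1$ directly yields $c_1\in B$, avoiding the $\eps$-parametrised unified treatment. However, the worker step has a gap that the rest of the argument depends on, and the paper's proof structure shows that the gap is not merely cosmetic.

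You construct $a_2\in A_w$ via $C_w(\bar z)=x_w+\onebf^{a_2}-\onebf^{c_1}$ with $\bar z:=(y_w\vee x_w)-\onebf^{c_1}$, and then assert that $(c_1,a_2)$ is the \emph{essential} $w$-pair, deferring to a ``short case split.'' But what you prove is only that $(c_1,a_2)$ is a \emph{legal} $w$-pair with $a_2\in A_w$; essentiality (uniqueness per~\refeq{U+a}) is not established. The paper's Claims~2 and~3 make it explicit that the essential pair $(c_1,a^*)$ need not have $a^*\in Y^>_w$: Claim~3 allows the alternative $y(a^*)\le x(a^*)$ together with $a^*$ not interesting for its firm endvertex under $y$. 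When that alternative occurs, $\bar z(a^*)=x(a^*)$ forces $C_w(\bar z)(a^*)\le x(a^*)$, so $a_2\ne a^*$ and $(c_1,a_2)$ is not essential. Indeed, if one carries out your hinted case split for $d\notin A_w$ in $U^+_F\cap E_w$, the combination of (A2), (A3) and stationarity only pins $C_w(\bar z+\onebf^d)$ to be one of $x_w+\onebf^{a_2}-\onebf^{c_1}$ or $x_w+\onebf^{d}-\onebf^{c_1}$; it does not rule out the second possibility, and in that second case $d$ \emph{is} interesting for $w$ under $x_w+\onebf^{a_2}-\onebf^{c_1}$, contradicting essentiality.

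This failure propagates. First, the outgoing edge of $w^{c_1}$ in $\Dscr(x)$ is the one determined by the essential pair, so if $(c_1,a_2)$ is not essential your trail is not a walk in $\Dscr(x)$ and the cycle you close need not lie in $\Rscr(x)$; Proposition~\ref{pr:xxp} is then unavailable and $x'$ need not be stable. Second, even if you work with the true essential pairs, the invariant $R^+\subseteq A$, $R^-\subseteq B$ fails, and your final verification collapses: the sandwich $(x+\chi^R)_f\vee y_f\le y_f\vee x_f$ fails at any $a^*\in R^+$ with $y(a^*)\le x(a^*)$, because $x'(a^*)=x(a^*)+1>y(a^*)$. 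The paper's Claim~1 is stated precisely so as to cover this case~(II) (via the $\eps$-trick), and the concluding paragraph of the paper's proof replaces your pointwise sandwich by a lattice-theoretic argument: $z^i:=x_f+\onebf^{a_i}-\onebf^{c_i}\preceq_f y_f$ for each legal $f$-pair in $R$ (from Claim~1, cases~(I) \emph{and}~(II)), then $x'_f=\curlyvee_{i}z^i\preceq_f y_f$ by the lattice structure of $(\Ascr_f,\succ_f)$ together with Lemma~\ref{lm:acac}. That argument is robust under $R^+\not\subseteq A$; yours is not. To repair your approach you would need either a proof that the essential pair's positive edge always lies in $A_w$ (which the paper's own case analysis strongly suggests is false in general), or a replacement of the coordinate-wise sandwich by the join-based verification.
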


These two propositions imply that $\Sscr_x$ is just the set of immediately successors of $x$ in $(\Sscr,\prec_F)$, as mentioned in the beginning of this section. The proofs of these propositions will rely on the next two lemmas.

\begin{lemma} \label{lm:acac}
Let $f\in F$. Let $(a(1),c(1)), \ldots,(a(k),c(k))$ be pairwise disjoint legal $f$-pairs in $G$. Then for any $I\subseteq \{1,\ldots,k\}=:[k]$,
  \begin{multline*}
  C_f(x_f+\onebf^{a(1)}+\cdots+\onebf^{a(k)}-\sum(\onebf^{c(i)}\colon i\in I\}) \\
   =x_f+\onebf^{a(1)}+\cdots+\onebf^{a(k)}-\onebf^{c(1)}-\cdots-\onebf^{c(k)}.
  \end{multline*}
  \end{lemma}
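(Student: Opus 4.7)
My plan is to prove the equality in two stages. First, I will establish it for the ``top'' vector $y := x_f + \sum_{i=1}^k \onebf^{a(i)}$ (corresponding to $I=\emptyset$), showing that $C_f(y)$ coincides with the ``target'' vector $z := x_f + \sum_{i=1}^k \onebf^{a(i)} - \sum_{i=1}^k \onebf^{c(i)}$. Once this is done, I observe that for any $I\subseteq[k]$ the vector $y_I := x_f + \sum_{i=1}^k \onebf^{a(i)} - \sum_{i\in I}\onebf^{c(i)}$ sits between $y$ and $z = C_f(y)$, so axiom (A1) (consistence) immediately forces $C_f(y_I) = C_f(y) = z$, which is exactly the conclusion of the lemma.

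To compute $C_f(y) = z$, the tool is axiom (A2), applied to each comparison $y \ge x_f + \onebf^{a(i)}$. This yields, for every $i$,
\[
C_f(y)\wedge (x_f+\onebf^{a(i)}) \le C_f(x_f+\onebf^{a(i)}) = x_f + \onebf^{a(i)} - \onebf^{c(i)},
\]
where the rightmost equality is the definition of the legal $f$-pair $(a(i),c(i))$. Reading off the coordinate $e=c(i)$ and using the pairwise disjointness of the pairs (so $c(i)$ differs from every $a(j)$, and hence $(x_f+\onebf^{a(i)})(c(i))=x_f(c(i))$), one extracts the integer bound $C_f(y)(c(i))\le x_f(c(i))-1$ for every $i$. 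Trivially, $C_f(y)\le y$ yields $C_f(y)(a(i))\le x_f(a(i))+1$ for each $i$ and $C_f(y)(e)\le x_f(e)$ for every edge $e$ not appearing in any pair.

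Summing these componentwise bounds produces $|C_f(y)|\le |x_f|-k+k = |x_f|$. On the other hand, size-monotonicity (A3) applied to $y\ge x_f$, combined with acceptability $C_f(x_f)=x_f$, yields $|C_f(y)|\ge |x_f|$. Thus $|C_f(y)|=|x_f|$, and since each listed coordinate bound is integer and their sum matches the global upper bound $|x_f|$, every single coordinate bound is attained with equality. This pins down $C_f(y)$ edge by edge: it agrees with $x_f$ outside the pairs, equals $x_f(a(i))+1$ on each $a(i)$, and $x_f(c(i))-1$ on each $c(i)$; that is, $C_f(y)=z$. The lemma then follows by the (A1) argument of the first paragraph.

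The only delicate point is the tightness step: it is precisely the integrality of the coordinate bounds together with the pairwise disjointness of the pairs (so that the bound at each $c(j)$ lives in its own coordinate and contributes a genuine $-1$ to the size budget) that forces equality in every coordinate once the totals match. Beyond this, the argument is a clean combination of the three axioms (A1)--(A3) and is where the hypothesis of pairwise disjointness is actually used.
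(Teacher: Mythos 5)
Your proof is correct and follows essentially the same route as the paper's: both establish the $I=\emptyset$ case by applying (A2) to each comparison $y\ge x_f+\onebf^{a(i)}$ to force $C_f(y)(c(i))\le x_f(c(i))-1$, then invoke (A3) against $y\ge x_f$ to pin down $|C_f(y)|=|x_f|$ and hence $C_f(y)=z$, and finally use (A1) to squeeze the general $y_I$ between $y$ and $C_f(y)$. The only cosmetic difference is that the paper writes the conclusion of the (A2) step directly as the vector inequality $C_f(z^\emptyset)\le z^{[k]}$, while you unpack it coordinate by coordinate; the substance is identical.
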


 \begin{proof}
~Denote $x_f+\onebf^{a(1)}+\cdots+\onebf^{a(k)}-\sum(\onebf^{c(i)}\colon i\in
I\})$ by $z^I$. We have to show that $C(z^I)=z^{[k]}$ for any
$I\subseteq[k]$, where $C:=C_f$.

First we show this for $I=\emptyset$. To this aim, we compare the actions of
$C$ on $z^\emptyset=x_f+\onebf^{a(1)}+\cdots+\onebf^{a(k)}$ and on
$y^i:=x_f+\onebf^{a(i)}$ for an arbitrary $i\in[k]$. The definition of
$(a(i),c(i))$ implies $C(y^i)=x_f+\onebf^{a(i)}-\onebf^{c(i)}$. Applying~(A2) to $z^\emptyset\ge y^i$, we have
  $$
  C(z^\emptyset)\wedge y^i\le C(y^i)=x_f+\onebf^{a(i)}-\onebf^{c(i)}.
  $$
This and $y^i(c(i))=x(c(i))$ imply $C_f(z^\emptyset)(c(i))<x(c(i))$.

Thus, $C(z^\emptyset)\le z^\emptyset-\onebf^{c(1)}-\cdots-\onebf^{c(k)}=z^{[k]}$. Here the inequality must turn into equality, since axiom~(A3) applied to $z^\emptyset\ge x_f$ gives $|C(z^\emptyset)|\ge|C(x_f)|=|x_f|=|z^{[k]}|$.

Now consider an arbitrary $I\subseteq[k]$. Then $z^\emptyset\ge z^I\ge
C(z^\emptyset)=z^{[k]}$. Applying axiom~(A1), we obtain $C(z^I)=z^{[k]}$,
as required.
 \end{proof}

This lemma implies that
 \begin{numitem1} \label{eq:zxab}
the vector $z:=x_f+\onebf^{a(1)}+\cdots+\onebf^{a(k)}-\onebf^{c(1)}-\cdots-\onebf^{c(k)}$ ($=z^{[k]}$) is acceptable for $f$ and satisfies $z\succ_f x_f$.
  \end{numitem1}
(Here $z\succ_f x_f$ follows from $z=C_f(z^\emptyset\vee x_f)=C_f(C_f(z^\emptyset)\vee x_f)=C_f(z\vee x_f)$.)

The second lemma concerns $W$-pairs.
  \begin{lemma} \label{lm:partW}
Let $w\in W$. Let $(c,a)$ and $(c',a')$ be disjoint essential w-pairs under $x$. Let $z:=x_w+\onebf^a-\onebf^c$ and $z':=x_w+\onebf^{a'}-\onebf^{c'}$. Then: {\rm(i)} the vectors $z+\onebf^{a'}-\onebf^{c'}$ and $z'+\onebf^a-\onebf^c$ are acceptable; {\rm(ii)} no edge $d\in U^+_F(x)$ is interesting for $w$ under $y:=x_w+\onebf^a-\onebf^c+\onebf^{a'}-\onebf^{c'}$; and {\rm(iii)} the pair $(c',a')$ is essential under $z$, and $(c,a)$ is essential under $z'$.
  \end{lemma}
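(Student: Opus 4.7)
My plan is as follows. I will write $C := C_w$ and $\onebf^e := \onebf^e_w$ throughout. The starting observation I rely on is that the essentiality of $(c,a)$ via~\refeq{U+a} forces $a' \in U^+_F(x) \cap E_w - \{a\}$ not to be interesting for $w$ under $z$, so by~\refeq{e_noninterest} we get $C(\tilde z) = z$ where $\tilde z := z + \onebf^{a'}$; symmetrically $C(\tilde z') = z'$ with $\tilde z' := z' + \onebf^{a}$. I will also need, from the stability of $x$, that $C(x_w + \onebf^a) = C(x_w + \onebf^{a'}) = x_w$, and by two applications of (A2) combined with (A3) I then derive $C(\hat w) = x_w$ and $C(\hat w + \onebf^a) = x_w = C(\hat w + \onebf^{a'})$, where $\hat w := x_w + \onebf^a + \onebf^{a'}$. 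Note $\tilde z, \tilde z' \ge y$.

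For part (i) I would apply (A2) to $\tilde z \ge y$, yielding $C(y) \ge C(\tilde z) \wedge y = z \wedge y$, and then observe componentwise that $z \wedge y = y - \onebf^{a'}$. Symmetrically from $\tilde z' \ge y$ I get $C(y) \ge y - \onebf^{a}$. Taking the componentwise maximum gives $C(y) \ge y$, so with the automatic $C(y) \le y$ I conclude $C(y) = y$.

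For part (ii) I would fix $d \in U^+_F(x) \cap E_w$ with $y(d) < b(d)$; by Lemma~\ref{lm:e_interest} it suffices to show $C(y + \onebf^d) = y$. My strategy will be: first establish $C(\tilde z + \onebf^d) = z$ and $C(\tilde z' + \onebf^d) = z'$; then apply (A2) to $\tilde z + \onebf^d \ge y + \onebf^d$ and to $\tilde z' + \onebf^d \ge y + \onebf^d$; the componentwise identities $z \wedge (y + \onebf^d) = y - \onebf^{a'}$ and $z' \wedge (y + \onebf^d) = y - \onebf^{a}$ will yield $C(y + \onebf^d) \ge y$; finally (A3) applied to $\tilde z + \onebf^d \ge y + \onebf^d$ supplies the matching size upper bound $|C(y + \onebf^d)| \le |y|$, forcing $C(y + \onebf^d) = y$. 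When $d \ne a, a'$, the non-interestingness of $d$ for $w$ under both $x$ (by stability) and $z$ (by essentiality) gives $C(x_w + \onebf^d) = x_w$ and $C(z + \onebf^d) = z$, and the identity $C(\tilde z + \onebf^d) = z$ will follow by sandwiching between $\hat w + \onebf^d \ge \tilde z + \onebf^d$ and $\tilde z + \onebf^d \ge \tilde z$ via (A2)/(A3), noting $C(\hat w + \onebf^d) = x_w$ by the same argument as for $C(\hat w + \onebf^a)$. When $d = a$ (and symmetrically $d = a'$) I will replace $\tilde z + \onebf^d$ by $\tilde z + \onebf^{a}$ and $\tilde z' + \onebf^d$ by $z' + 2\onebf^{a}$: the sandwich $\hat w + \onebf^{a} \ge \tilde z + \onebf^{a}$ gives $C(\tilde z + \onebf^{a}) = z$, while $C(z' + 2\onebf^{a}) = z'$ follows from $C(z' + \onebf^{a}) = z'$ (which itself holds because $a$ is not interesting for $w$ under $z'$ by essentiality of $(c', a')$).

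Part (iii) will be immediate: $(c', a')$ is essential for $c'$ under $z$ precisely when $z + \onebf^{a'} - \onebf^{c'} = y$ is acceptable (provided by (i)) and no $d \in U^+_F(x) \cap E_w - \{a'\}$ is interesting for $w$ under $y$ (provided by (ii)); the symmetric statement for $(c,a)$ under $z'$ is identical. The hardest part will be the size-bookkeeping needed to pin down $C(\hat w + \onebf^{a}) = x_w$ and $C(\tilde z + \onebf^{a}) = z$ (and their analogues), each of which requires a careful two-sided sandwich of (A2) and (A3) through the chains $x_w \to \hat w \to \hat w + \onebf^{a}$ and $z \to \tilde z \to \tilde z + \onebf^{a}$, before the meet-estimates can finally close the argument for part (ii).
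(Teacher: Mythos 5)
Your proof is correct and follows essentially the same line as the paper's: the same auxiliary vectors $z$, $z'$, $\tilde z = z+\onebf^{a'}$, $\tilde z' = z'+\onebf^a$, $y$, and the same (A2)/(A3) sandwiches. Part (i) and part (iii) match the paper verbatim in spirit. In part (ii) your execution differs slightly: the paper fixes only $\tilde z+\onebf^d$ with $C(\tilde z+\onebf^d)=z$, assumes for contradiction that $d$ is interesting under $y$, does a case analysis on whether $C(y+\onebf^d)$ equals $y+\onebf^d$ or $y+\onebf^d-\onebf^{d'}$, and finishes by a symmetry argument forcing $d'=a'$ and $d'=a$ simultaneously; you instead use both $\tilde z+\onebf^d$ and $\tilde z'+\onebf^d$ symmetrically from the start, derive the two meet-estimates $C(y+\onebf^d)\ge y-\onebf^{a'}$ and $C(y+\onebf^d)\ge y-\onebf^a$, and pin down $C(y+\onebf^d)=y$ directly via (A3). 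This is a clean variant of the same argument; the price you pay is having to separately justify $C(\tilde z+\onebf^d)=z$ and $C(\tilde z'+\onebf^d)=z'$ in the corner cases $d\in\{a,a'\}$, which you do correctly (and the hypothesis $y(d)<b(d)$ is exactly what makes the $\eps=2$ application of~\refeq{e_noninterest} legitimate there). The detour through $\hat w=x_w+\onebf^a+\onebf^{a'}$ is not strictly needed — the chain $z\le z+\onebf^d\le\tilde z+\onebf^d$ together with $C(z+\onebf^d)=z$, $C(\tilde z)=z$ already gives $C(\tilde z+\onebf^d)=z$ via one (A2) meet and one (A3) size bound — but it is harmless.
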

  \begin{proof}
~Let $C:=C_w$. Since $(c,a)$ is essential under $x$, the edge $a'$ is not interesting under $z$, i.e. $C(z+\onebf^{a'})=z$ (cf.~\refeq{U+a}). Similarly, $a$ is not interesting under $z'$, i.e. $C(z'+\onebf^{a})=z'$. Assertion~(i) means that 
 \begin{equation} \label{eq:zzp_acc}
  C(z+\onebf^{a'}-\onebf^{c'})=z+\onebf^{a'}-\onebf^{c'}\quad\mbox{and} \quad
      C(z'+\onebf^a-\onebf^c)=z'+\onebf^a-\onebf^c.
      \end{equation}
      
To show this, apply~(A2) to the inequality $z+\onebf^{a'}>y$; then $C(z+\onebf^{a'})\wedge y\le C(y)$. This implies $C(y)(a)\ge x(a)+1$ and $C(y)(c)\ge x(c)-1$. Similarly, applying~(A2) to $z'+\onebf^a>y$, we obtain $C(y)(a')\ge x(a')+1$ and $C(y)(c)\ge x(c')-1$. Also $C(y)(e)\ge x(e)$ for all $e\in E_w-\{a,a',c,c'\}$. The obtained inequalities together with $C(y)\le y$ imply the equality $C(y)=y$. Now $C(y)=y=z+\onebf^{a'}-\onebf^{c'}=z'+\onebf^a-\onebf^c$ gives~\refeq{zzp_acc}.

To show~(ii), let $d\in U^+_F(x)-\{a\}$. One may assume that either $d\ne a'$, or $d=a'$ and $x(d)\le b(d)-2$ (for the case with $d=a'$ and $x(d)=b(d)-1$ is trivial). Since $(c,a)$ is essential under $x$, we have $C(z+\onebf^{a'})=C(z+\onebf^d)=z$. Then $C(z+\onebf^{a'}+\onebf^d)=z$. (This follows from~\refeq{e_noninterest} if $d=a$. And if $d\ne a'$, it is shown by use of (A2) and (A3).)

Suppose, for a contradiction that $d$ is interesting under $y$. Then $C(y+\onebf^d)$ is equal to either (a) $y+\onebf^d$, or (b) $y+\onebf^d-\onebf^{d'}$ for some $d'\ne d$.

In case~(a), applying (A3) to $z+\onebf^{a'}+\onebf^d> y+\onebf^d$, we would have
   $$
   |x_w|=|z|=|C(z+\onebf^{a'}+\onebf^d)|\ge |C(y+\onebf^d)|=|y+\onebf^d|=|x_w|+1,
   $$
which is impossible.

Now consider case~(b). Applying~(A2) to the inequality $z+\onebf^{a'}+\onebf^d>y+\onebf^d$ and using the equalities $C(z+\onebf^{a'}+\onebf^d)=z$ and $y=z+\onebf^{a'}-\onebf^{c'}$, we obtain
   $$
   z\wedge (z+\onebf^{a'}-\onebf^{c'}+\onebf^d)\le z+\onebf^{a'}-\onebf^{c'}+\onebf^d-\onebf^{d'}.
   $$

In view of $d'\ne d$, this inequality is possible only if $d'=a'$. It follows that $d\ne a'$, and now we can argue as above by considering the vector $z'$ (along with $d\in U^+_F(x)-\{a'\}$) in place of $z$. But then, by symmetry, the edge $d'$ (which is the same as before) must be equal to $a$ rather than $a'$; a contradiction. So~(ii) is valid, as required.

Finally, (iii) easily follows from (i),(ii) and $y=z+\onebf^{a'}-\onebf^{c'}=z'+\onebf^a-\onebf^c$.  \end{proof}

Using an induction, one can obtain from Lemma~\ref{lm:partW} the following
  \begin{corollary} \label{cor:essent_pairs}
For $w\in W$, let $(c(1),a(1)),\ldots,(c(k),a(k))$ be pairwise edge disjoint essential $w$-pairs under $x$. Let $I\subseteq[k]$. Then: {\rm(i)} the vector $z^I:=x_w+\sum_{i\in I}(\onebf^{a(i)}-\onebf^{c(i)})$ is acceptable, i.e. $C_w(z^I)=z^I$; {\rm(ii)} no edge $e\in U^+_F$ is interesting for $w$ under $z^I$; and {\rm(iii)} for each $j\in[k]-I$, the pair $(c(j),a(j))$ is essential under $z^I$. \hfill$\qed$
  \end{corollary}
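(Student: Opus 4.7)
The plan is to prove (i), (ii), (iii) simultaneously by induction on $|I|$, treating $k$ as fixed. I will carry forward a strengthened induction hypothesis asserting, at every stage, all three conclusions for every strictly smaller $I' \subseteq [k]$. For the base case $|I| = 0$, we have $z^\emptyset = x_w \in \Ascr_w$, giving~(i); no $e = wf \in U^+_F \cap E_w$ can be interesting for $w$ under $x_w$, since otherwise $e$ would block the stable $x$, giving~(ii); and~(iii) is precisely the hypothesis that all $(c(j),a(j))$ are essential under $x$.

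For the inductive step, fix $I$ with $|I| \ge 1$, pick any $j \in I$, and set $I' := I \setminus \{j\}$. By induction, $z^{I'} \in \Ascr_w$, no edge of $U^+_F \cap E_w$ is interesting for $w$ under $z^{I'}$, and the pair $(c(j), a(j))$ is essential under $z^{I'}$. The essentiality gives (by Definition~4 and~\refeq{U+a} re-read with $z^{I'}$ in place of $x_w$) that $z^I = z^{I'} + \onebf^{a(j)} - \onebf^{c(j)}$ is acceptable, which is~(i). To obtain~(ii) and the updated~(iii) for $I$, I propose to replay the argument of Lemma~\ref{lm:partW} with $z^{I'}$ taking the role of $x_w$, $(c(j), a(j))$ taking the role of $(c,a)$, and each $(c(j'),a(j'))$ with $j' \in [k] \setminus I$ taking the role of $(c',a')$. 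The key observation is that the proof of Lemma~\ref{lm:partW} uses only three facts about $x_w$: that it is acceptable, that no edge of $U^+_F \cap E_w$ is interesting under it, and that the two pairs in question are essential under it. All three of these facts are provided for $z^{I'}$ by the inductive hypothesis, so the axiom (A2)/(A3) chain of arguments there transports verbatim, yielding that no $d \in U^+_F \cap E_w$ is interesting under $z^I$ and that $(c(j'), a(j'))$ remains essential under $z^I$.

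The main obstacle is establishing that Lemma~\ref{lm:partW} is genuinely robust under the substitution $x_w \rightsquigarrow z^{I'}$: one must check that no step in that proof secretly uses the global stability of $x$ beyond what is already encoded in the inductive hypothesis (in particular, the non-interestingness of $U^+_F \cap E_w$ edges is needed repeatedly to ensure that various $C_w$-evaluations collapse to the expected vectors). Once this ``relative'' version of Lemma~\ref{lm:partW} is extracted, the induction proceeds without further subtlety, and the ``straightforward induction'' phrasing in the text is vindicated.
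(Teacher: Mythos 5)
Your overall plan---a joint induction on $|I|$ carrying (i)--(iii) along, with the inductive step relying on a ``relativized'' form of Lemma~\ref{lm:partW} in which $z^{I'}$ plays the role of $x_w$---is the right reading of the paper's ``straightforward induction,'' and your handling of~(i) and~(iii) is sound. But there is a genuine gap in how you obtain~(ii) for $z^I$.

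In Lemma~\ref{lm:partW}, part~(ii) is a statement about the \emph{doubly}-shifted vector $y = x_w+\onebf^{a}-\onebf^{c}+\onebf^{a'}-\onebf^{c'}$, not about $z = x_w+\onebf^a-\onebf^c$. Under your substitution $x_w\rightsquigarrow z^{I'}$, $(c,a)\rightsquigarrow(c(j),a(j))$ with $j\in I$, $(c',a')\rightsquigarrow(c(j'),a(j'))$ with $j'\in[k]\setminus I$, the analogue of $z$ is $z^{I}$ but the analogue of $y$ is $z^{I\cup\{j'\}}$. So the transported Lemma~\ref{lm:partW}(ii) gives non-interestingness under $z^{I\cup\{j'\}}$, a strictly larger index set, and says nothing directly about $z^I$. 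What you actually have for $z^I$ from the inductive hypothesis is the essentiality of $(c(j),a(j))$ under $z^{I'}$, and by the definition in~\refeq{U+a} this only excludes edges $d\in U^+_F\cap E_w-\{a(j)\}$ from being interesting under $z^I$; it is silent about $a(j)$ itself, which does lie in $U^+_F\cap E_w$. Yet the corollary's~(ii) covers $a(j)$ too, and this is in fact used in the proof of Proposition~\ref{pr:xxp} (where the blocking edge $e$ can be one of the $a(i)$'s).

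The missing piece is an argument that $a(j)$ is not interesting for $w$ under $z^I$. For $|I|\ge 2$ one can recover it by a flexible choice of pivot: pick $j_0\in I\setminus\{j\}$, and use the inductive hypothesis~(iii) for $I\setminus\{j_0\}$ to get $(c(j_0),a(j_0))$ essential under $z^{I\setminus\{j_0\}}$, which by~\refeq{U+a} excludes $a(j)\ (\ne a(j_0))$ from being interesting under $z^I$. But for $|I|=1$ there is no alternative pivot, and a direct argument is required: if $z^I(a(j))<b(a(j))$, then from IH~(ii) one has $C_w(z^{I'}+2\onebf^{a(j)})=z^{I'}$ (via~\refeq{e_noninterest}), and applying~(A2) to $z^{I'}+2\onebf^{a(j)}\ge z^I+\onebf^{a(j)}$ together with~(A3) and Lemma~\ref{lm:e_interest}(iv) rules out both the ``pure increase'' and the ``swap'' outcome for $C_w(z^I+\onebf^{a(j)})$, forcing $a(j)$ to be non-interesting. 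Without this step (or the pivot trick plus the base case), your induction does not close~(ii), and the claim that the replay of Lemma~\ref{lm:partW} ``yields that no $d\in U^+_F\cap E_w$ is interesting under $z^I$'' is not justified as written.
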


\noindent\textbf{Proof of Proposition~\ref{pr:xxp}.} For $v\in V$, we write $R^+_v$ for $R^+\cap E_v$, and $R^-_v$ for $R^-\cap E_v$. First of all we observe that $x'_v$ is acceptable for all $v\in V$. This is immediate if $v\notin V_R$. For $f\in F\cap V_R$, this follows from~\refeq{zxab} with $R^+_f=\{a(1),\ldots,a(k)\}$ and $R^-_f=\{c(1),\ldots,c(k)\}$. And for $w\in W\cap V_R$, this follows from Corollary~\ref{cor:essent_pairs} with $R^+_w=\{a(1),\ldots,a(k)\}$, $R^-_w=\{c(1),\ldots,c(k)\}$ and $I=[k]$.

Thus, $x'\in\Ascr$. Applying~\refeq{zxab} to all $f\in F\cap V_R$, we obtain $x'\succ_F x$.

Arguing from contradiction, suppose that $x'$ is not stable and consider a
blocking edge $e=wf$ for $x'$, i.e. $e$ is interesting for $w$ under $x'_w$,
and for $f$ under $x'_f$. Note that $x(e)\le x'(e)$. (For $x(e)>x'(e)$
would imply $e\in R^-$. Then Lemma~\ref{lm:acac} applied to
$R^+_f$ and $R':=R^-_f-\{e\}$ should give $C_f(x'_f+\onebf^{e})=C_f(x_f+ \chi^{R^+_f}-\chi^{R'})=x'_f$, contrary to the fact that $e$ is interesting for $f$ under $x'_f$.) Note also that $x(e)<x'(e)$ would imply $e\in R^+$. But Corollary~\ref{cor:essent_pairs} applied to the set of essential $w$-pairs $(c,a)$ formed by edges $c\in R^-_w$ and $a\in R^+_w$  implies that $e$ is not interesting under $x'_w=x_w-\chi^{R^-_w}+\chi^{R^+_w}$; a contradiction. Therefore, $x(e)=x'(e)$.

Next we assert that the chosen edge $e$ is interesting for $f$ under $x$. For suppose not. Then $C(x_f+\onebf^{e})=x_f$, letting $C:=C_f$, and we obtain (using~\refeq{plott} and~\refeq{zxab}):
 $$
 C(x'_f\vee (x_f+\onebf^{e}))=C(x'_f\vee C(x_f+\onebf^{e}))=C(x'_f\vee x_f)=x'_f.
 $$
On the other hand, for $z':=C(x'_f+\onebf^{e})$ and $p:=x(e)=x'(e)$, we have
  \begin{multline*}
 C(x'_f\vee (x_f+\onebf^{e}))= C(x'_f\vee x_f\vee (p+1)\onebf^{e}) \\
 =C(C(x'_f\vee x_f)\vee(p+1)\onebf^{e})= C(x'_f+\onebf^{e})=z'.
  \end{multline*}
Therefore, $z'=x'_f$. But then $e$ is not interesting for $f$ under $x'$; a
contradiction. Thus, $e$ is interesting for $f$ under $x$.

Finally, since $e$ is interesting for $f$ under $x$, it is not interesting for $w$ under $x$ (by the stability of $x$). Applying Corollary~\ref{cor:essent_pairs} to the set of essential $w$-pairs occurring in $R$ and to the edge $e$ (which is in $U^+_F(x)$), we conclude that $e$ is not interesting for $w$ under $x'_w=x_w+\chi^{R^+_w}-\chi^{R^-_w}$, contradicting the supposition that $e$ is blocking for $x'$. 

Thus, $x'$ admits no blocking edges and therefore is stable. \hfill$\qed$
 \medskip

\noindent\textbf{Proof of Proposition~\ref{pr:xpy}.}
We will use some auxiliary claims. Note that, by~\refeq{AG}(c), for each vertex $v\in V$, the sizes of the restrictions of $x$ and $y$ to $E_v$ are equal: $|x_v|=|y_v|$. Since $y\ne x$, there exists $f\in F$ such that $y_f\ne x_f$; then $x_f\prec_f y_f$. For $v\in V$, define
  $$
  Y^>_v:=\{e\in E_v\colon y(e)>x(e)\}\quad\mbox{and}\quad 
                Y^<_v:=\{e\in E_v\colon y(e)<x(e)\}.
 $$
Clearly if $y_f\ne x_f$, then both $Y^>_f$ and $Y^<_f$ are nonempty. 
\medskip

\noindent\textbf{Claim 1.} \emph{Let $f\in F$ and $y_f\ne x_f$. Let $a=wf\in E_f$ be interesting for $f$ under $x$ (viz. $a\in U^+_F(x)$). Suppose that either {\rm(I):} $a\in Y^>_f$, or {\rm(II):} $y(a)\le x(a)$ and $a$ is not interesting for $f$ under $y$. Then: {\rm(i)} $C_f(x_f+\onebf^a)=x_f+\onebf^a-\onebf^c$ for some $c\in E_f$; {\rm(ii)} $c$ belongs to $U^-_F(x)$ and $Y^<_f$; and {\rm(iii)} $z:=x_f+\onebf^a-\onebf^c$ satisfies $x_f\prec_f z\preceq_f y_f$.
 }
 \medskip
 
\noindent\textbf{Proof} 
We try to unify our analysis in situations (I) and (II) by putting $\eps:=0$ in~(I), and $\eps:=x(a)-y(a)+1$ in~(II). Then $y(a)\le b(a)$, and $C(y_f+\eps\onebf^a)=y_f$ (cf.~\refeq{e_noninterest} in case~(II)), where we write $C$ for $C_f$, and $\onebf^\bullet$ for $\onebf^\bullet_f$.

Since $a\in U^+_F(x)$, two cases are possible (cf.~Lemma~\ref{lm:e_interest}(iv)): (a) $C(x_f+\onebf^a)= x_f+\onebf^a$, or (b) $C(x_f+\onebf^a)= x_f+\onebf^a-\onebf^c$ for some $c\in E_f-\{a\}$.
 
Suppose we are in case~(a). Then $|C(x_f+\onebf^a)|=|x_f|+1$. But applying~(A3) to $x_f+\onebf^a\le (y_f+\eps\onebf^a)\vee x_f$ (which follows from $y(a)+\eps\ge x(a)+1$), we obtain
  $$
  |C(x_f+\onebf^a)|\le |C((y_f+\eps\onebf^a)\vee x_f)|=|C(C(y_f+\eps\onebf^a)\vee x_f)|=|C(y_f\vee x_f)|=|y_f|=|x_f|,
  $$
using $C(y_f+\eps\onebf^a)=y_f$, and $C(y_f\vee x_f)=y_f$; a contradiction.

Thus, case~(b) takes place: $C_f(x_f+\onebf^a)= x_f+\onebf^a-\onebf^c$ for some $c\in E_f-\{a\}$, as required in~(i). Then $(a,c)$ forms a legal $f$-pair for $x$.

This implies that $c\in U^-_F(x)$ (by Lemma~\ref{lm:acd}). Then $y(c)\le x(c)$ (for $y(c)>x(c)$ would imply $c\in U^+_F(x)$, by Lemma~\ref{lm:e_interest}(i),(ii)). Suppose that $y(c)=x(c)=:p$. Then, using $C(y_f+\eps\onebf^a)=y_f$ and $y(a)+\eps\ge x(a)+1$, we have
 \begin{multline*}
 y_f=C(y_f\vee x_f)=C((y_f+\eps\onebf^a)\vee x_f)=C((y_f-\onebf^c)\vee(x_f+\onebf^a)) \\
 =C((y_f-\onebf^c)\vee C(x_f+\onebf^a)) = C((y_f-\onebf^c)\vee (x_f+\onebf^a-\onebf^c)),
 \end{multline*}
whence $y(c)\le p-1$; a contradiction. Thus, $y(c)<x(c)$, as required in~(ii). 

Finally, the relation $x_f\prec_f z$ in~(iii)  is due to Lemma~\ref{lm:acd}. To see the second relation there, put $q:=x(a)$. Then $y_f+\eps\onebf^a=y_f\vee (q+1)\onebf^a$, and using relations $C(y_f+\eps \onebf^a)=y_f$, ~$C(y_f\vee x_f)=y_f$, ~$C(x_f+\onebf^a)=z$, and~\refeq{plott}, we have
  \begin{multline*}
y_f=C(y_f\vee (q+1)\onebf^a)=C(y_f\vee x_f\vee (q+1)\onebf^a) \\
 =C(y_f\vee C(x_f\vee (q+1)\onebf^a))=C(y_f\vee z),
  \end{multline*}
whence $z\preceq_f y_f$, as required in (iii).  
\hfill$\qed$
  \medskip
  
\noindent\textbf{Claim 2.} \emph{Let $c=hf$ be as in Claim~1 (where $h\in W$). Then:
{\rm(i)} the set $Y^>_h$ is nonempty, and each edge $d=hg\in Y^>_h$ is not interesting for $h$ under $x$; {\rm(ii)} there exists $d'\in Y^>_h$ such that $\hat z:=x_h-\onebf^c+\onebf^{d'}$ is acceptable; and {\rm(iii)} $x_h\succ_h \hat z\succeq_h y_h$ is valid.
}
 \medskip
 
\noindent\textbf{Proof} ~Since $|y_h|=|x_h|$ and $c\in Y^<_h$, we have $Y^>_h\ne\emptyset$. For an edge $d=hg \in Y^>_h$, the relations $x(d)<y(d)$ and $x_g\prec_g y_g$ imply that $d$ is interesting for $g$ under $x$ (by Lemma~\ref{lm:e_interest}). Then $d$ is not interesting for $h$ under $x$ (since $x$ is stable), yielding~(i).

To show~(ii), take $d\in Y^>_h$ and form the vectors $z:=x_h-\onebf^c+\onebf^d$  and $z':=y_h\vee z$ (where $\onebf^\bullet$ means $\onebf^\bullet_h$). Then $(x_h\vee y_h)>z'$ (in view of $x(c)>z(c)\ge y(c)$ and $y(d)\ge x(d)+1=z(d)$). Also $C(x_h\vee y_h)=x_h$ (since $x_h\succ_h y_h$), letting $C:=C_h$. 

Applying (A2) to $(x_h\vee y_h)>z'$, we have $C(x_h\vee y_h)\wedge z'\le C(z')$; equivalently, $x_h\wedge z'\le C(z')$. This implies $C(z')\ge x_h-\onebf^c$ (since $z'\ge x_h-\onebf^c+\onebf^d$). On the other hand,~(A3) applied to $(x_h\vee y_h)>z'$ gives $|x_h|\ge |C(z')|$. Also $z'(c)=x(c)-1$. It follows that $C(z')$ is equal to either (a) $x_h-\onebf^c$, or (b) $x_h-\onebf^c+\onebf^{d'}$ for some $d'\in E_h-\{c\}$. 

Case~(a) is impossible since $z'\ge y_h$ implies $|C(z')|\ge|C(y_h)|=|x_h|$ (by~(A3)). Therefore,  $C(z')=x_h-\onebf^c+\onebf^{d'}=:\hat z$. Then $C(\hat z)=\hat z$. Also $z'(d')>x(d')$. This and $z'(d')=(y_h\vee(x_h-\onebf^c+\onebf^d))(d')$ imply $y(d')>x(d')$ (which is valid for $d'=d$ as well). Then $d'\in Y^>_h$, and~(ii) follows. 

Finally, the first relation in (iii) follows from $C(x_h\vee\hat z)=C(x_h+\onebf^{d'})= x_h$ (since $d'$ belongs to $Y^>_h$, and therefore it is not interesting for $h$ under $x$, by~(i)). And to see the second relation in~(iii), consider $z$ and $z'$ as above; then
   $$
   \hat z=C(z')=C(y_h\vee z)=C(y_h\vee (y_h\vee z))=C (y_h\vee C(z'))=C(y_h\vee \hat z). \qquad \qed
   $$

\noindent\textbf{Claim 3.}
\emph{Let $h$ and $c$ be as in Claim~2. Let $a'=hf'\in U^+_F(x)$ be such that the $h$-pair $(c,a')$ is essential under $x$. Then either $y(a')>x(a')$ (viz. $a'\in Y^>_h$), or $y(a')\le x(a')$ and $a'$ is not interesting for the vertex $f'$ under $y$.}
 \medskip
 
\noindent\textbf{Proof}
Consider the edge $d'\in Y^>_h$ and vector $\hat z=x_h-\onebf^c+\onebf^{d'}$ as in Claim~2. Since $(c,a')$ is essential, $a'$ is interesting for $h$ under $\hat z$, and for $z':=x_h-\onebf^c+\onebf^{a'}$, we have
  \begin{equation} \label{eq:xzpzy}
  x_h\succ_h z'\succeq_h \hat z\succeq_h y_h
  \end{equation}
(in view of Lemma~\ref{lm:essent_pair}, \refeq{U+a}, and (iii) in Claim~2).

If $y(a')>x(a')$, we are done. So suppose that $y(a')\le x(a')$. We assert that $a'$ is interesting for the vertex $h$ under $y$. This follows from Lemma~\ref{lm:e_interest}(i),(ii) if $y(a')<x(a')$ (since $y_h\prec_h x_h$). Now assume that $y(a')=x(a')$. Then applying~(A2) to the inequality $z'\vee y_h\ge y_h+\onebf^{a'}$, we have
      $$
      C(z'\vee y_h)\wedge (y_h+\onebf^{a'})\le C(y_h+\onebf^{a'}),
      $$ 
where  $C:=C_h$. Since $C(z'\vee y_h)=z'$ (in view $z'\succeq_h y_h$, by~\refeq{xzpzy}) and $z'(a')=x(a')+1=y(a')+1$, we can conclude that $C(y_h+\onebf^{a'})(a')>y(a')$. Thus, $C(y_h+\onebf^{a'})\ne y_h$, whence $a'$ is interesting for $h$ under $y$, as required.

This and the stability of $y$ imply that $a'=hf'$ must be not interesting for the vertex $f'$ under $y$, yielding the claim. \hfill$\qed$
 \medskip

Now we are ready to finish the proof of the proposition. We start our construction by choosing $f\in F$ with $y_f\ne x_f$ and fixing $a\in Y^>_f$. Then $a\in U^+_F(x)$, and by Claim~1, $a$ generates $f$-legal pair $(a,c=hf)$. The edge $c$ belongs to $U^-_F(x)\cap Y^<_h$, and by Claims~2 and~3, it generates essential $h$-pair $(c,a'=hf')$. Moreover, according to Claim~3, the edge $a'=hf'$ (which is in $U^+_F(x)$) possesses the property that either $a'\in Y^>_{f'}$, or $y(a')\le x(a')$ and $a'$ is not interesting for the vertex $f'\in F$ under $y$; in both cases, $y_{f'}\ne x_{f'}$ is valid. So we get into the hypotheses of Claim~1 with $a'$  (in place of $a$), and can repeat the procedure starting with $a'$.  

By continuing this process, we obtain an ``infinite'' sequence of edges (determined by the first edge $a$), say, sequence $(a_1,c_1,a_2,c_2, \ldots)$, where for each $i$, $(a_i,c_i)$ forms a legal $F$-pair, and $(c_i,a_{i+1})$ forms an essential $W$-pair for $x$. Moreover, as is shown in the above claims,  these pairs obey relations of the form 
  $$
  x_{f(i)}\prec_{f(i)} z(i) \preceq_{f(i)} y_{f(i)}\quad \mbox{and} \quad
     x_{w(i)}\succ_{w(i)} \hat z(i) \succeq_{w(i)} y_{w(i)},
     $$
where: $f(i)$ is the vertex in $F$ shared by the edges $a_i$ and $c_i$;  $w(i)$ is the vertex in $W$ shared by $c_i$ and $a_{i+1}$;  $z(i):=x_{f(i)}+\onebf^{a_i}_{f(i)}-\onebf^{c_i}_{f(i)}$; and $\hat z(i):=x_{w(i)}-\onebf^{c_i}_{w(i)}+\onebf^{a_{i+1}}_{w(i)}$.

Now we extract from the ``infinite'' path $(w(0),a_1,f(1),c_1,w(1),a_2,f(2),c_2,w(2),\ldots)$ a minimal portion between two copies of the same vertex, say, copies $w(j)$ and $w(k)$ of $w\in W$ (where $j<k$). This gives a cycle $R$ which is nothing else than a rotation applicable to $x$. By Proposition~\ref{pr:xxp}, $R$ determines the stable g-matching $x':=x+\chi^R$.

For $f\in F$, let $I_f$ be the set of copies $f(i)$ of $f$ such that $j<i\le k$; then $\{(a_i,c_i)\colon i\in I(f)\}$ is the set of legal $f$-pairs occurring in $R$ (possibly $I_f=\emptyset$).  By Claim~1, for each $i\in I_f$, the vector $z^i:=x_f+\onebf^{a_i}-\onebf^{c_i}$
satisfies $x_f\prec_f z^i\preceq_f y_f$. Then in the lattice
$(\Ascr_f,\succ_f)$, the join $z_f:=\curlyvee(z^i\colon i\in I_f)$ satisfies $x_f\prec_f  z_f \preceq_f y_f$. Lemma~\ref{lm:acac} implies that
  $$
  \curlyvee(z^i\colon i\in I_f)= C_f(\vee(z^i\colon i\in I_f))=
  \vee_{i\in I_f}(x_f+\onebf^{a(i)}-\onebf^{c(i)}).
  $$
Therefore, $z_f$ coincides with the restriction $x'_f$ of $x'$ to $E_f$,
and we can conclude that the g-matching $x'\in \Sscr_x$ satisfies $x\prec_F
x'\preceq_F y$, as required in the proposition. \hfill$\qed\;\qed$
 \medskip

Taking together Propositions~\ref{pr:xxp} and~\ref{pr:xpy}, we conclude with the following
\begin{corollary} \label{cor:immed_succ}
For each $x\in \Sscr$, the set $\Sscr_x$ of stable g-matchings immediately succeeding $x$ in the lattice $(\Sscr,\prec_F)$ consists exactly of the vectors $x+\chi^R$ formed by rotations $R$ applicable to $x$ (viz. $R\in\Rscr(x)$). Also  $|\Sscr_x|=|\Rscr(x)|\le |E|/4$. 
\end{corollary}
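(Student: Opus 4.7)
\medskip
\noindent\textbf{Proof plan.}
The plan is to derive the corollary as a direct consequence of Propositions~\ref{pr:xxp} and~\ref{pr:xpy}. Write $T_x := \{x + \chi^R : R \in \Rscr(x)\}$ and let $S^{\rm imm}_x$ denote the set of immediate successors of $x$ in $(\Sscr, \prec_F)$; the goal is the set equality $T_x = S^{\rm imm}_x$.

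For the inclusion $S^{\rm imm}_x \subseteq T_x$, I would take an arbitrary immediate successor $y$ of $x$ and invoke Proposition~\ref{pr:xpy} on the pair $(x, y)$ to extract an $R \in \Rscr(x)$ whose shift $t := x + \chi^R$ satisfies $t \preceq_F y$. Proposition~\ref{pr:xxp} then certifies that $t \in \Sscr$ and $t \succ_F x$, so $x \prec_F t \preceq_F y$ lives entirely inside $\Sscr$. Immediacy of $y$ above $x$ forbids any stable element strictly between $x$ and $y$, which forces $t = y$, placing $y$ in $T_x$.

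For the reverse inclusion $T_x \subseteq S^{\rm imm}_x$, I would fix $t = x + \chi^R \in T_x$, which is stable and strictly above $x$ by Proposition~\ref{pr:xxp}, and argue by contradiction. If some $y \in \Sscr$ satisfied $x \prec_F y \prec_F t$, a second application of Proposition~\ref{pr:xpy} would furnish $t' = x + \chi^{R'} \in T_x$ with $t' \preceq_F y \prec_F t$, yielding a strict inequality $t' \prec_F t$ between two rotation-shifts of the common base $x$.

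The main obstacle is ruling out such a strict inequality between distinct members of $T_x$. My approach would exploit the active-graph structure: distinct rotations in $\Rscr(x)$ are pairwise edge-disjoint cycles produced as components of the cleaned digraph $\Dscr(x)$, and Proposition~\ref{pr:xxp} (via~\refeq{zxab}) guarantees the strict improvement $t_f \succ_f x_f$ precisely at the ``firm'' vertices $f \in V_R \cap F$ touched by $R$, and likewise $t'_f \succ_f x_f$ exactly on $V_{R'} \cap F$. Hence, if some $f^* \in V_{R'} \cap F$ does not lie on $R$, then $t'_{f^*} \succ_{f^*} x_{f^*} = t_{f^*}$ immediately violates $t' \preceq_F t$. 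The delicate sub-case in which $V_R \cap F = V_{R'} \cap F$ yet $R \ne R'$ is dispatched by transporting the problem to the $W$-side through the polarity~(\ref{eq:AG})(b) and running the symmetric argument there: edge-disjoint cycles in $\Dscr(x)$ that share their $F$-trace must differ in the attached essential $W$-pairs and hence in their $W$-trace, yielding a ``worker'' vertex at which the $\prec_w$-comparison of $t, t'$ obstructs $t \preceq_W t'$ and, by polarity, obstructs $t' \preceq_F t$. Once $t = t'$ is forced, $y$ is squeezed onto $t$, contradicting $y \prec_F t$, so $t \in S^{\rm imm}_x$.
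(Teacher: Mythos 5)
Your high-level structure is the same as the paper's (the paper dispatches the corollary in one sentence as a consequence of Propositions~\ref{pr:xxp} and~\ref{pr:xpy}), and your treatment of the inclusion $S^{\rm imm}_x\subseteq T_x$ is fine. You are also right that the converse inclusion $T_x\subseteq S^{\rm imm}_x$ silently requires ruling out a strict relation $t'\prec_F t$ between two distinct rotation-shifts $t=x+\chi^R$, $t'=x+\chi^{R'}$; the paper does not spell this out. But your proposed way of ruling it out has a genuine gap.

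The problem is the ``delicate sub-case.'' You claim that edge-disjoint rotations sharing their $F$-trace ``must differ in the attached essential $W$-pairs and hence in their $W$-trace.'' The first half is true (they are edge-disjoint, so they use different pairs), but the inference to a different $W$-trace is a non sequitur: two edge-disjoint cycles in a bipartite graph can visit exactly the same sets of vertices on both sides (e.g.\ two edge-disjoint $6$-cycles in $K_{3,3}$), and nothing in the construction of $\Dscr(x)$ or the cleaning procedure excludes this for two distinct components, since a vertex $v$ of $G$ has several preimages $v^e$ in $\Dscr$. So your case analysis is not exhaustive, and passing to the $W$-side does not rescue it.

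The fix is to argue directly at a shared firm. Take any $f\in V_{R'}\cap F$. If $f\notin V_R$, you already handle it: $t_f=x_f\prec_f t'_f$ by~\refeq{zxab}. If $f\in V_R\cap V_{R'}$, use that $R,R'$ are edge-disjoint, so the legal $f$-pairs occurring in $R$ and in $R'$ are pairwise disjoint; then $t_f\vee t'_f=x_f+\chi^{R^+_f}+\chi^{R'^+_f}$, and Lemma~\ref{lm:acac} (applied with the union of these pairs and $I=\emptyset$) gives $C_f(t_f\vee t'_f)=x_f+\chi^{R_f}+\chi^{R'_f}$, which equals neither $t_f$ nor $t'_f$ (each misses the other's contribution), so $t_f$ and $t'_f$ are incomparable under $\prec_f$. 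Either way $t'_f\preceq_f t_f$ fails at some $f$, so $t'\preceq_F t$ is impossible. This handles all cases uniformly and avoids the unjustified $W$-trace claim.
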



\section{Additional properties of rotations} \label{sec:addit_prop}

In this section we describe more properties of rotations in SGMM; they
will be important to construct the poset of rotations in the next section.

Consider a rotation $R\in\Rscr(x)$ for a stable g-matching $x$. In the
previous section we described the transformation of $x$ that consists in
increasing the values of $x$ by 1 on the set $R^+$, and decreasing by 1 on
$R^-$. The resulting g-matching $x':=x+\chi^R$ is again stable
and more preferred for $F$: $x'\succ_F x$; this $x'$ is said to be obtained by shifting $x$ with weight 1 along $R$. We, however,
can try to increase this weight.
\medskip

\noindent\textbf{Definition 6.} Let $x\in\Sscr$ and $R\in\Rscr(x)$. A weight $\lambda\in\Zset_{>0}$ is called \emph{feasible} for $R$ under $x$ if shifting along $R$ with weight 1 can be repeated, step by step, $\lambda$ times, i.e. the sequence $x=x_0,x_1,\ldots, x_\lambda$ defined by $x_i:=x_{i-1}+\chi^{R}$, $i=1,\ldots,\lambda$, consists of stable g-matchings. In particular, $\lambda$ is at most $\min\{\min\{b(e)-x(e)\colon e\in R^+\},\min\{x(e)\colon e\in R^-\}$. We say that $x_\lambda$ is obtained from $x$ by \emph{shifting with weight $\lambda$ along} $R$, or \emph{by applying $R$ with weight $\lambda$}. The maximal feasible $\lambda$ for $(x,R)$ is denoted by $\tau_R(x)$. 
\medskip

Using Lemmas~\ref{lm:acac} and~\ref{lm:partW}, one
can realize that for $\lambda\in\Zset_{>0}$,
   \begin{numitem1} \label{eq:l<t}
if $\lambda<\tau_R(x)$, then shifting $x$ with weight $\lambda$ along $R$ preserves
the active graph, i.e., $\Gamma(x')=\Gamma(x)$ holds for
$x':=x+\lambda\chi^R$, implying $\Rscr(x')=\Rscr(x)$;
also $\tau_R(x')=\tau_R(x)-\lambda$ and $\tau_{R'}(x')=\tau_{R'}(x)$ for all
$R'\in\Rscr(x)-\{R\}$.
  \end{numitem1}

On the other hand,
  \begin{numitem1} \label{eq:l=t}
under shifting $x$ with weight $\tau_R(x)$ along $R\in\Rscr(x)$, yielding $x'$, at least one of the following four
\emph{events} happens: (I) some $e\in R^-$ becomes 0-valued: $x'(e)=0$; or (II)
some $e\in R^+$ becomes saturated: $x'(e)=b(e)$; or (III) there is $f\in F_R$
such that some legal $f$-pair $(a,c)$ occurring in $R$ is destroyed (i.e. $C_f(z+\onebf^{a})$ is different from $z+\onebf^{a}-\onebf^{c}$, where $z:=x_f+\tau_R(x) \chi{^R}\rest{E_f}$ and $z(a)<b(a)$); or (IV) some essential $W$-pair $(c,a)$ occurring in $R$ is destroyed.
  \end{numitem1}
  
Properties~\refeq{l<t} and~\refeq{l=t} imply that rotations with feasible
weights for $x$ commute. More precisely, using Lemmas~\ref{lm:acac} and~\ref{lm:partW}, one can obtain the following
\begin{corollary} \label{cor:commute}
Let $\Rscr'\subseteq\Rscr(x)$ and let $\lambda:\Rscr'\to \Zset_+$ be such
that $\lambda(R)\le \tau_R(x)$ for each $R\in\Rscr'$. Then the g-matching
$x':=x+\sum(\lambda(R) \chi^R \colon R\in\Rscr')$ is stable,
each $R\in \Rscr'$ with $\lambda(R)<\tau_R(x)$ is an applicable rotation for $x'$ having the maximal feasible weight $\tau_R(x')=\tau_R(x)-\lambda(R)$, and each $R'\in \Rscr(x)-\Rscr'$ is an applicable rotation for $x'$ with $\tau_{R'}(x')=\tau_{R'}(x)$. In particular, rotations in $\Rscr(x)$ can be applied in any order.
\end{corollary}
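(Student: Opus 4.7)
The plan is to induct on $K := \sum_{R \in \Rscr'} \lambda(R)$, applying one unit shift along a single rotation at each step; the base case $K = 0$ is immediate. For the inductive step, choose $R \in \Rscr'$ with $\lambda(R) \ge 1$ and set $\mu(R) := \lambda(R) - 1$, $\mu(R'):=\lambda(R')$ for $R'\ne R$. The induction hypothesis applied to $\mu$ produces a stable g-matching $y := x + \sum_{R^* \in \Rscr'} \mu(R^*)\chi^{R^*}$ in which $R$ still belongs to $\Rscr(y)$ with $\tau_R(y) = \tau_R(x) - \mu(R) \ge 1$ (because $\mu(R) < \tau_R(x)$), and every other rotation of $\Rscr(x)$ that has not been used up by $\mu$ also sits in $\Rscr(y)$ with its expected maximal weight. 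Setting $x' := y + \chi^R$, Proposition~\ref{pr:xxp} guarantees that $x'$ is stable and coincides with the vector of the corollary.

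If $\tau_R(y) \ge 2$, so that $\lambda(R) < \tau_R(x)$, the single unit shift of $R$ on $y$ is covered by~\refeq{l<t}: the active graph is preserved, whence $\Rscr(x') = \Rscr(y)$, $\tau_R(x') = \tau_R(y) - 1 = \tau_R(x) - \lambda(R)$, and $\tau_{R^*}(x') = \tau_{R^*}(y)$ for every other $R^*$. Combining this with the induction hypothesis delivers all the assertions claimed for $\lambda$ in this subcase.

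The main obstacle is the subcase $\tau_R(y) = 1$, i.e.\ $\lambda(R) = \tau_R(x)$, in which~\refeq{l<t} no longer applies and an event of~\refeq{l=t} may genuinely alter the active graph of $y$. Here $R$ itself is used up by $\lambda$, so only the survival of every other $R^* \in \Rscr(y) - \{R\}$ as a rotation for $x'$ with $\tau_{R^*}(x') = \tau_{R^*}(y)$ needs to be established. Since distinct rotations of $\Rscr(y)$ arise from vertex-disjoint components of the cleaned graph $\Dscr(y)$, $R$ and $R^*$ are edge-disjoint in $G$ and meet only at common vertices. At a shared vertex $f \in F$, the legal $f$-pairs $(a,c)$ of $R$ and $(a',c')$ of $R^*$ involve four distinct edges; applying Lemma~\ref{lm:acac} at $y_f$ with $k=2$, $I=\{1\}$ yields $C_f(x'_f+\onebf^{a'})=x'_f+\onebf^{a'}-\onebf^{c'}$, so $(a',c')$ remains a legal $f$-pair at $x'_f$. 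At a shared vertex $w \in W$, Corollary~\ref{cor:essent_pairs} applied analogously to the edge-disjoint essential $w$-pairs of $R$ and $R^*$ shows the $R^*$-pair remains essential under $x'_w$ and that no edge of $U^+_F$ becomes newly interesting for $w$; together with~\refeq{e_noninterest} for the edges outside any tandem, these facts give $R^* \in \Rscr(x')$.

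The equality $\tau_{R^*}(x')=\tau_{R^*}(y)$ is then obtained by iterating the same argument along each of the $\tau_{R^*}(y)$ potential unit shifts of $R^*$: since $R$- and $R^*$-edges are disjoint, successive unit shifts of $R^*$ from $x'$ produce the same sequence of stable g-matchings as from $y$ with the fixed $\chi^R$ increment superimposed, and the terminating event of~\refeq{l=t} for $R^*$ occurs at the same weight in both sequences. The final ``in any order'' clause is then immediate, since the constructed $x'$ depends only on $\lambda$.
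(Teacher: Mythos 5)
Your proof follows the route the paper implicitly intends: an induction on the total weight $\sum\lambda(R)$, a case split around whether the current unit shift of $R$ is the last one (so that~\refeq{l<t} either applies or fails), and, in the hard subcase $\tau_R(y)=1$, an appeal to Lemma~\ref{lm:acac} and Corollary~\ref{cor:essent_pairs} to show the tandems of any other rotation $R^*$ survive the shift along $R$. The paper gives no proof of this corollary beyond pointing to properties~\refeq{l<t},~\refeq{l=t} and Lemmas~\ref{lm:acac},~\ref{lm:partW}, so your write-up supplies useful detail in the same spirit.

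There is, however, a genuine gap in the closing sentence of the $\tau_R(y)=1$ subcase, where you assert that the terminating event of~\refeq{l=t} for $R^*$ occurs at the same weight in both sequences. Your iteration argument --- using~\refeq{l<t} to keep the $R$- and $R^*$-tandems legal at $y+j\chi^{R^*}$ for $j<\tau_{R^*}(y)$ and then Lemma~\ref{lm:acac}/Corollary~\ref{cor:essent_pairs} at each such $j$ --- establishes the inequality $\tau_{R^*}(x')\ge\tau_{R^*}(y)$. Events~(I) and~(II) of~\refeq{l=t} concern only $R^*$-edges, which are untouched by $\chi^R$, so they indeed trigger simultaneously. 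But for events~(III) and~(IV) the converse inequality $\tau_{R^*}(x')\le\tau_{R^*}(y)$ requires showing that a tandem of $R^*$ destroyed at $y+\tau_{R^*}(y)\chi^{R^*}$ is also destroyed at $x'+\tau_{R^*}(y)\chi^{R^*}=\bigl(y+\tau_{R^*}(y)\chi^{R^*}\bigr)+\chi^R$, and Lemma~\ref{lm:acac} as stated only runs in the direction ``legality is preserved under adding another legal pair,'' not ``illegality is preserved.'' Moreover, at that weight one can no longer invoke~\refeq{l<t} to guarantee the $R$-tandem at the shared vertex is still legal, which is precisely what Lemma~\ref{lm:acac} presupposes. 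Closing this requires a separate argument --- e.g.\ arguing by contradiction that $\tau_{R^*}(x')>\tau_{R^*}(y)$ would yield, via the $W$--$F$ polarity and the reverse rotations, that $y+(\tau_{R^*}(y)+1)\chi^{R^*}$ is stable, contradicting the maximality of $\tau_{R^*}(y)$.
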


One more notion will be of importance for us. This is close to the definition of a maximal (or connected, or dense) chain in a lattice.
 \medskip

\noindent\textbf{Definition 7.} Let $\Tscr$ be a sequence $x_0,x_1,\ldots,x_N$
of stable g-matchings such that each $x_{i}$ ($1\le i\le N$) is obtained from $x_{i-1}$ by shifting with a feasible weight $\lambda_i>0$ along a rotation $R_i\in\Rscr(x_{i-1})$. In particular, $x_0\prec_F\cdots\prec_F x_N$. We call $\Tscr$ a \emph{route} from $x_1$ to $x_N$, and liberally say that a rotation $R_i$ as above \emph{is used}, or \emph{occurs}, in $\Tscr$. This route is called \emph{non-excessive} if $i<j$ and $R_i=R_j$ imply $\lambda_i=\tau_{R_i}(x_{i-1})$. When $\lambda_i=\tau_{R_i}(x_{i-1})$ for all $i$, we say that $\Tscr$ is \emph{principal}. A principal route from $\xmin$ to $\xmax$ is called \emph{full}. 
 \medskip

Using Corollary~\ref{cor:commute}, one can see that
  \begin{numitem1} \label{eq:non-excess}
for any $x,y\in\Sscr$ with $x\prec_F y$, there exists a non-excessive route from $x$ to $y$.
  \end{numitem1}

\noindent\textbf{Remark~1.} A reasonable question is how large can be  the length $N$ of a non-excessive route $\Tscr$. It is known (see e.g.~\cite{DM}) that in case of stable allocation problem each rotation (as a cycle in $G$) can be used in a non-excessive $\Tscr$ at most once, and the number of rotations does not exceed $|E|/4$; so $|\Tscr|$ is $O(|E|)$. However, already for the generalized allocation problem with integer data considered in~\cite{karz3} (where the side $W$ is endowed with linear orders, while $F$ with general CFs), the same rotation may be used in a non-excessive $\Tscr$ more than once, possibly many times; as a consequence, the length $|\Tscr|$ can be ``large''. Such a behavior is demonstrated by an example in~\cite[Appendix]{karz3} in which the basic graph $G$ has six vertices, there are only two rotations at all, and these rotations are intermixed in a non-excessive route $\Tscr$ of length $|\Tscr|=\qmax+1$, where $\qmax$ is the maximum quota on $W$ (which can be arbitrarily large). 
 \medskip 
 
Return to a general case of SGMM. For a non-excessive route $\Tscr$, let $\Rscr(\Tscr)$ denote the set of \emph{different} rotations used in $\Tscr$. For $R\in\Rscr(\Tscr)$, let $\Pi_R(\Tscr)$ denote the family, with possible replications, of the pairs $(R,\lambda)$ (\emph{weighted rotations}) used in $\Tscr$. (The same pair $(R,\lambda)$ may be repeated in $\Pi_R(\Tscr)$ many times.) Define $\Pi(\Tscr):=\cup(\Pi_R(\Tscr)\colon R\in \Rscr(\Tscr))$.

Extending well-known nice properties of rotations, one can show that the family $\Pi(\Tscr)$ is the same for all full routes $\Tscr$. (Initially an invariance property of this sort was revealed by Irving and Leather~\cite{IL} for (unweighted) rotations in the classical stable marriages problem and subsequently was  demonstrated in the literature for more general models of stability. This is analogous, in a sense, to the fundamental fact in Birkhoff~\cite{birk} that the set of prime ideals constructed by use of a maximal chain in a finite distributive lattice does not depend on the chain; it will be reviewed in the next section.) It is convenient to us to extend such an invariance property to arbitrary pairs of comparable stable g-matchings in our model, not necessarily $(\xmin,\xmax)$.

  \begin{prop} \label{pr:invar_rot}
Let $x,y\in \Sscr$ and $x\prec_F y$. Then for all non-excessive routes $\Tscr$ going from $x$ to $y$, the family $\Pi(\Tscr)$ is the same. A similar property is valid relative to principal routes as well (when $y$ is reachable from $x$ by a principal route).
 \end{prop}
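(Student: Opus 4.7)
The plan is to induct on the length $N$ of $\Tscr$ and show that every other non-excessive route $\Tscr'$ from $x$ to $y$ satisfies $\Pi(\Tscr')=\Pi(\Tscr)$. The base case $N=0$ forces $x=y$ and both routes are empty. For the step, write $\Tscr=(x=x^0,\ldots,x^N=y)$ with steps $(R_i,\lambda_i)$ and set $(R,\lambda):=(R_1,\lambda_1)$, so $x^1=x+\lambda\chi^R$. The reduction I would drive is: starting from $\Tscr'$, produce a non-excessive route $\hat{\Tscr}'$ from $x$ to $y$ whose first step is also $(R,\lambda)$ and with $\Pi(\hat{\Tscr}')=\Pi(\Tscr')$. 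Stripping off this shared first step leaves non-excessive routes from $x^1$ to $y$; the tail of $\Tscr$ has length $N-1$, so the induction hypothesis applies and yields equal $\Pi$ on the tails, hence $\Pi(\Tscr)=\Pi(\hat{\Tscr}')=\Pi(\Tscr')$.

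The engine for building $\hat{\Tscr}'$ is Corollary~\ref{cor:commute}, which tells us that rotations belonging to a common $\Rscr(z)$ commute and preserve each other's maximal feasible weight. Writing $\Tscr'=(x=z^0,\ldots,z^M=y)$ with steps $(S_j,\mu_j)$, I would first locate the earliest index $j$ with $S_j=R$ and then migrate this $R$-step to the front by swapping past $S_{j-1},S_{j-2},\ldots,S_1$ one step at a time. As long as $R\in\Rscr(z^{i-1})$ at the stage in question, Corollary~\ref{cor:commute} licenses the replacement of the adjacent pair $(S_i,\mu_i),(R,\cdot)$ by $(R,\cdot),(S_i,\mu_i)$, preserving both $\Pi$ and non-excessiveness; note that by~\refeq{l=t} consecutive steps of any non-excessive route must use distinct rotations, since applying a rotation with its maximal weight destroys it as a member of the active graph. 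Once $R$ has reached the front, a final adjustment aligns its weight with $\lambda$: any excess or deficit is shuttled into or out of a subsequent occurrence of $R$ in $\hat{\Tscr}'$, which is available because the aggregate weight of $R$ needed to realize the displacement $y-x$ is the same along both routes---itself a consequence of Corollary~\ref{cor:commute} combined with the event dichotomy~\refeq{l=t}.

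The principal-route assertion is an immediate consequence, since principal routes form a subclass of non-excessive ones. I expect the main obstacle to arise when $R\notin\Rscr(z^{i-1})$ but $R\in\Rscr(z^i)$, i.e.\ when the step $S_i$ is precisely what creates $R$'s tandems in the active graph; in that configuration Corollary~\ref{cor:commute} does not directly license the swap. The remedy is to inspect, via~\refeq{l=t}, which of the four events---a saturated positive edge, a vanished negative edge, a destroyed legal $F$-pair, or a destroyed essential $W$-pair---underlies the emergence of $R$ at $z^i$, and to use that structural information either to split off the portion of the weight of $S_i$ that is ``responsible'' for the creation of $R$, or to reorganize a short surrounding block of steps into a configuration to which Corollary~\ref{cor:commute} applies. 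Carrying this case analysis through while keeping track of weights so that $\Pi$ is preserved at every move is the technical heart of the proof.
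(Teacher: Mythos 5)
Your strategy (induct on route length, migrate the first rotation of~$\Tscr$ to the front of~$\Tscr'$ by adjacent swaps, then strip and recurse) is genuinely different from the paper's and, as written, has gaps that are not where you think they are.

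The obstacle you flag---that $R\notin\Rscr(z^{i-1})$ while $R\in\Rscr(z^i)$, so the swap is blocked---does not actually occur. Here $R=R_1\in\Rscr(x)=\Rscr(z^0)$, and Corollary~\ref{cor:commute} says that applying any $S_i\in\Rscr(z^{i-1})$ with $S_i\ne R$ leaves $R$ applicable at $z^i$ with \emph{unchanged} maximal weight. Since $j$ is the earliest index with $S_j=R$, induction gives $R\in\Rscr(z^i)$ for all $i<j$ with $\tau_R(z^i)=\tau_R(x)$, so every swap you want is licensed. Your proposed remedy for the nonexistent obstacle (a case analysis on~\refeq{l=t}) is therefore a detour.

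The real gaps are these. (1)~You ``locate the earliest index $j$ with $S_j=R$'' in $\Tscr'$, but nothing you cite shows that $R$ occurs in $\Tscr'$ at all. Your justification---that ``the aggregate weight of $R$ needed to realize the displacement $y-x$ is the same along both routes''---is not a consequence of Corollary~\ref{cor:commute} plus~\refeq{l=t}; rotations applied at different intermediate g-matchings may share edges, so $y-x=\sum\lambda_i\chi^{R_i}$ does not determine the multiset of $(R_i,\lambda_i)$'s. This aggregate-weight invariance is essentially (a projection of) the proposition you are trying to prove, so invoking it here is circular. (2)~The weight-alignment step conflicts with non-excessiveness: if $\lambda<\tau_R(x)$, a non-excessive route that opens with $(R,\lambda)$ is \emph{forbidden} from using $R$ again, so ``shuttling the excess into a subsequent occurrence of $R$'' is not available; and shuttling the \emph{other} way (increasing the front weight to $\tau_R(x)$) changes $\Pi$ on the tail in a way you then still have to reconcile.

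The paper avoids the migration machinery entirely by arguing from the top: pick a bad $x'$ of maximal height (all strictly higher $z\in\Xscr$ already good), take two non-excessive routes from $x'$ to $y$ with first steps $(R,\lambda)$, $(R',\lambda')$, use Corollary~\ref{cor:commute} once to form the common $z''=x'+\lambda\chi^R+\lambda'\chi^{R'}$, and then invoke goodness of $z$ and $z'$ to equate the tails, contradicting badness of $x'$. This is a Newman's-lemma-style local-confluence argument; it never needs to know whether $R$ appears in the other route or how to match weights, which is exactly what makes it work where your plan stalls. Your approach could probably be salvaged along those same lines, but as written it has unproved claims that do real load-bearing work.
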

 \begin{proof}
Let $\Xscr$ denote the set of stable g-matchings $x'$ such that there is a non-excessive route from $x$ to $y$ passing $x'$ (in particular, $x\preceq_F x'\preceq_F y$). Let us say that $x'\in\Xscr$ is \emph{bad} if there exist two non-excessive routes $\Tscr,\Tscr'$ from $x'$ to $y$ such that $\Pi(\Tscr)\ne \Pi(\Tscr')$, and \emph{good} otherwise. One has to show that $x$ is good (when $y$ is fixed).

Suppose this is not so, and consider a bad g-matching $x'\in\Xscr$ of maximal height, in the sense that each g-matching $z\in\Xscr$ with $x'\prec_F z\preceq_F
y$ is already good. In any non-excessive route from $x'$ to $y$, the first
g-matching  after $x'$ is obtained from $x'$ by applying a certain weighted
rotation from $\Rscr(x')$. 
Since $x'$ is bad, there are two non-excessive routes $\Tscr,\Tscr'$ from $x'$ to $y$ such that $\Pi(\Tscr)\ne\Pi(\Tscr')$. Let $z$ ($z'$) be the first g-matching in $\Tscr$ (resp. $\Tscr'$) after $x'$. Then $z$ is obtained from $x'$ by shifting with some weight $\lambda$ along a rotation $R\in\Rscr(x')$, and similarly, $z'$ is obtained from $x'$ by shifting with some weight $\lambda'$ along a rotation $R'\in\Rscr(x')$. We have $z,z'\in \Xscr$ and $R\ne R'$. The maximal choice of $x'$ implies that both $z$ and $z'$ are good.

At the same time, the pairs $(R,\lambda)$ and $(R',\lambda')$ commute at $x'$
(cf. Corollary~\ref{cor:commute}), i.e. $R$ taken with weight $\lambda$ is applicable to $z'$, and $R'$ taken with weight $\lambda'$ is applicable to $z$. Let $z''$ be obtained from $x'$ by applying $(R,\lambda)$ followed by applying $(R',\lambda')$ (or vice versa). Then $z''\prec_F y$. Let $\Tscr''$ be a non-excessive route from $z''$ to $y$. 

Now form the route $\tilde \Tscr$ ($\tilde\Tscr'$) from $x'$ to $y$ that begins with $x',z,z''$ (resp. $x',z',z''$) and then continues as $\Tscr''$. One can see that both routes $\tilde \Tscr$ and $\tilde\Tscr'$ are non-excessive. (Indeed, it suffices to check the weights of $R$ and $R'$. If $\lambda=\tau_R(x')$ and $\lambda'=\tau_{R'}(x')$, we are done. And if, say, $\lambda<\tau_R(x')$, then $R$ cannot be used in the part of $\Tscr$ after $z$ (since $\Tscr$ is non-excessive), and therefore it cannot be used in $\Tscr''$ (since $z$ is good).) We have $\Pi(\tilde\Tscr)=\Pi(\tilde\Tscr')$. Since both $z,z'$ are good, there must be
$\Pi(\tilde\Tscr)=\Pi(\Tscr)$ and $\Pi(\tilde\Tscr')=\Pi(\Tscr')$. But then
$\Pi(\Tscr)=\Pi(\Tscr')$; a contradiction.

For principal routes, the argument is similar (even simpler).
 \end{proof}

It follows that for all full routes $\Tscr$, the family $\Pi(\Tscr)$  of weighted rotations (with possible replications) is the same; we denote it by $\Pi=\Pi_{G,b,C}$ (so all possible rotations applicable to stable g-matchings occur in $\Pi$). Also we write $\Rscr$ for the set of different (unweighted) rotations $\Rscr(\Tscr)$, and for a fixed $R\in\Rscr$, write $\Pi_R$ for the family $\Pi_R(\Tscr)$ of pairs $(R,\lambda)$ used in $\Tscr$.
  \medskip
  
\noindent\textbf{Definition 8.} A stable g-matching $x\in\Sscr$ is called \emph{principal} if there is a principal route from $\xmin$ to $x$. The set of principal g-matchings is denoted by $\Sscr^\ast$, and the restriction of $\prec_F$ to $\Sscr^\ast$ is denoted by $\prec_F^\ast$, or briefly by $\prec^\ast$.
One can see that 
\begin{numitem1} \label{eq:S_ast}
$(\Sscr^\ast,\prec^\ast)$ forms a distributive sublattice of $(\Sscr,\prec)$.
 \end{numitem1}

In the next section we explain how to arrange a partial order on $\Pi$ so as to obtain the desired representations for the lattices $(\Sscr^\ast,\prec^\ast)$ and $(\Sscr,\prec)$.


\section{Poset of rotations} \label{sec:poset_rot}

Our construction of poset representation for $(\Sscr,\prec_F)$ relies on a somewhat simpler construction of a representation for the principal sublattice $(\Sscr^\ast,\prec^\ast=\prec_F^\ast)$.

Let $x\in\Sscr^\ast$. By Proposition~\ref{pr:invar_rot}, for any $y\in \Sscr^\ast$ with $y\succ x$, the family $\Pi(\Tscr)$ of pairs (weighted rotations) $(R,\tau_R)$, with possible replications, is the same for all principal routes $\Tscr$ from $x$ to $y$. We denote it by $\Pi^{x,y}$. Also we denote the set $\Rscr(\Tscr)$ of \emph{different} rotations used in $\Tscr$ by $\Rscr^{x,y}$, and for $R\in\Rscr^{x,y}$, we write $\Pi_R^{x,y}$ for $\Pi_R(\Tscr)$. When $x=\xmin$, we may abbreviate such notation as $\Pi^y$, $\Rscr^y$, $\Pi_R^y$, respectively. When, in addition, $y=\xmax$, we use further abbreviations $\Pi,\Rscr,\Pi_R$.

Let us associate with $x$ the numerical function $\omega^x:\Rscr\to \Zset_+$ taking values $\omega^x(R):=|\Pi_R^x|$ for $R\in\Rscr^x$, and 0 for $R\in\Rscr-\Rscr^x$. A nice property is that $\omega^x$ determines $x$.

\begin{lemma} \label{lm:omega_x}
Let $x,x'\in\Sscr^\ast$ and let $\omega^x=\omega^{x'}$. Then $x=x'$.
  \end{lemma}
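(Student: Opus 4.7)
The plan is to argue by induction on $N := \sum_{R \in \Rscr} \omega^x(R)$, which equals the length of any principal route from $\xmin$ to $x$. For the induction to close cleanly, I will actually prove the slightly more general statement: for any $z \in \Sscr^\ast$ and any $x, x' \in \Sscr^\ast$ with $z \preceq x$ and $z \preceq x'$, if the rotation-occurrence multiplicity functions along principal routes from $z$ to $x$ and from $z$ to $x'$ coincide (these are well-defined by Proposition~\ref{pr:invar_rot}), then $x = x'$. The lemma is the special case $z = \xmin$. The base case $N = 0$ is immediate, since no rotation is applied and $x = z = x'$.

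For the inductive step ($N \geq 1$), fix principal routes $\Tscr : z = y^0, \ldots, y^N = x$ and $\Tscr' : z = (y')^0, \ldots, (y')^N = x'$. Let $R$ be the first rotation of $\Tscr$, applied with weight $\tau_R(z)$; since the multiplicity counts agree and $R$ contributes to the one for $\Tscr$, the rotation $R$ must occur somewhere in $\Tscr'$ as well, and I let $j \geq 1$ be its first position there. The key sub-claim is that $\Tscr'$ can be reorganized into a principal route from $z$ to $x'$ that starts with $R$.

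To prove the sub-claim, I first show by induction on $i$ that $R \in \Rscr((y')^i)$ with $\tau_R((y')^i) = \tau_R(z)$ for $i = 0, 1, \ldots, j-1$: the inductive step applies Corollary~\ref{cor:commute} at $(y')^{i-1}$ with the singleton family $\{R'_i\}$, which is legitimate since $R'_i \neq R$ by the minimality of $j$. Next, I bubble $R$ from position $j$ back to position $1$ by $j - 1$ successive swaps; each swap is justified by invoking Corollary~\ref{cor:commute} at the common preceding state $(y')^{i-1}$ with the pair $\{R, R'_i\}$, both applicable there with their maximal weights. Commutativity ensures that the state at the end of the swapped segment is unchanged, the portion of the route beyond position $j$ is untouched, and every step in the new sequence still uses its maximal weight, so the rearranged route $\Tscr''$ is indeed a principal route from $z$ to $x'$.

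Now $\Tscr$ and $\Tscr''$ share their first step, ending at the common g-matching $z' := z + \tau_R(z)\chi^R$. Their tails are principal routes from $z'$ to $x$ and from $z'$ to $x'$ of length $N - 1$, and their multiplicity functions are obtained from the originals by subtracting the indicator of $R$; hence they still agree, and the inductive hypothesis yields $x = x'$. The main obstacle I anticipate is the bubbling step: one must use both parts of Corollary~\ref{cor:commute} carefully—commutativity of simultaneously applicable rotations and preservation of the maximal weights of the untouched ones—to guarantee that the rearranged route is a valid principal route still terminating at $x'$ and that each intermediate maximal weight is undisturbed.
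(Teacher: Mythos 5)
Your proof is correct, but it takes a genuinely different route from the paper's. The paper argues by contradiction: it sets $y := x\curlywedge^{\!\ast} x'$ and shows that if $R'_1$ is the first rotation of a principal route $\Tscr'$ from $y$ to $x'$, then $R'_1$ also occurs in the route $\Tscr$ from $y$ to $x$ (since the multiplicities agree), and by repeatedly commuting $R'_1$ past $R_1, R_2, \ldots$ via Corollary~\ref{cor:commute} one finds that $x'_1$ lies below both $x$ and $x'$ while being strictly above $y$, contradicting that $y$ is the meet. Your proof instead runs a direct induction on the common route length $N$: you pick the first rotation $R$ of $\Tscr$, show via Corollary~\ref{cor:commute} that $R$ remains applicable with unchanged maximal weight at every intermediate point of $\Tscr'$ before its first occurrence, and then bubble $R$ to the front of $\Tscr'$, so that both routes share their first step and the induction hypothesis applies to the tails. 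The commutation machinery (Corollary~\ref{cor:commute}) is the engine in both arguments; what differs is the wrapper. The paper's version is shorter but relies on the lattice structure of $(\Sscr^\ast,\prec^\ast)$ to take the meet $y$; yours is more constructive and self-contained, needing only reachability and commutation rather than the meet, at the modest cost of the extra bookkeeping in the bubbling step (which you handle carefully and correctly: the key checks are that $\tau_R$ stays at $\tau_R(z)$ along the prefix, that each bypassed $R'_i$ keeps its maximal weight after $R$ is applied first, and that the state after the bubbled block is unchanged).
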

\begin{proof} 
~Suppose, for a contradiction, that $x\ne x'$. Let $y:=x\curlywedge^{\!\ast} x'$ (the greatest lower bound for $x,x'$ in $(\Sscr^\ast,\prec^\ast$)). Consider a principal route $\Tscr=(y=x_0,x_1,\ldots, x_N)$ from $y$ to $x$ and a principal route $\Tscr'=(y,x'_1,\ldots, x'_{N})$ from $y$ to $x'$; then $\omega^x=\omega^{x'}$ implies $N=N'$ and $\Rscr^{y,x}=\Rscr^{y,x'}$. Let $(R_1,\ldots,R_N)$ and $(R'_1,\ldots,R'_N)$ be the sequences of rotations used in $\Tscr$ and $\Tscr'$, respectively. Then $R_1$ and $R'_1$ are different rotations commuting at $y$ (cf. Corollary~\ref{cor:commute}). So we can apply $(R'_1,\tau_{R'_1})$ to $x_1$ (or, equivalently, $(R_1,\tau_{R_1})$ to $x'_1$), obtaining a principal g-matching; denote it by $z_2$. The case $z_2\in\Tscr$ (equivalently, $R'_1=R_2$) would imply $y\prec x'_1\prec x,x'$, contradicting the definition of $y$. So $z_2\notin\Tscr$, and therefore, $R_2$ and $R'_1$ are different rotations applicable to $x_1$.

Using similar reasonings for $x_1,R_2,R'_1$, we then observe that the principal g-matching  obtained by applying  $(R'_1,\tau_{R'_1})$ to $x_2$ (or, equivalently, $(R_2,\tau_{R_2})$ to $z_2$) cannot belong to $\Tscr$ (or, equivalently, $R'_1\ne R_3$), for otherwise we would have $y\prec x'_1\prec x,x'$ again. Continuing this process, we sooner or later come to the situation when $R'_1=R_i$ for some $i\le N$, thus coming to a contradiction with the definition of $y$. 
\end{proof}
  
Arguing in a similar spirit, one can conclude that
  \begin{numitem1} \label{eq:omega-omega}
  any $x,x'\in\Sscr^\ast$ satisfy $\omega^{\,x\curlywedge^{\!\ast} x'}=\omega^{x}\wedge \omega^{x'}$ and $\omega^{\,x\curlyvee^{\ast} x'}=\omega^x\vee\omega^{x'}$,
  \end{numitem1}
where $\curlywedge^{\!\ast}$ and $\curlyvee^{\!\ast}$ mean the meet and join  in $(\Sscr^\ast,\prec^\ast)$. In other words, $\omega$ determines an isomorphism between the principal lattice $(\Sscr^\ast,\prec^\ast)$ and the vector sublattice $(\omega^{\Sscr^\ast},<)$ in $\Zset^\Rscr_+$. We will use this correspondence in parallel with Birkhoff's constructions.
 
Next we give a review of constructions and facts from~\cite{birk} that are important to us.


\subsection{Birkhoff's constructions} \label{ssec:birk}

Let $(\Lscr,\prec)$ be a finite distributive lattice with join $\curlyvee$, meet $\curlywedge$, minimal element $O$ and maximal element $I$. A set $\Pscr\subseteq \Lscr$ is called an \emph{ideal} of $(\Lscr,\prec)$ if
   $$
   X\curlyvee Y\in\Pscr\;\; \mbox{and}\;\; X\curlywedge Z\in\Pscr\;\;\mbox{for any}\;\; X,Y\in\Pscr\;\;\mbox{and}\;\; Z\in \Lscr,
   $$
and a set $\Dscr\subseteq \Lscr$ is called a \emph{dual ideal}, or a \emph{filter}, if
   $$
   X\curlywedge Y\in\Dscr\;\; \mbox{and}\;\; X\curlyvee Z\in\Dscr\;\;\mbox{for any}\;\; X,Y\in\Dscr\;\;\mbox{and}\;\; Z\in \Lscr.
   $$
Then any ideal $\Pscr$ (filter $\Dscr$) is determined by an element $X\in\Lscr$ and consists of all $Y\in\Lscr$ such that $Y\preceq X$ (resp. $Y\succeq X$); we denote this $X$ as ${\rm MAX}(\Pscr)$ (resp. ${\rm MIN}(\Dscr)$).

An ideal $\Pscr$ is called \emph{prime} if its complement $\Lscr-\Pscr$ is a filter. Following~\cite{birk}, these especial ideals can be extracted by handling an arbitrary maximal (dense) chain in $(\Lscr,\prec)$, i.e. a sequence $\Cscr=(X_0,X_1,\ldots, X_n)$ in $\Lscr$ such that $X_0=O$, $X_n=I$, and each $X_i$ ($i>0$) is an immediate successor of $X_{i-1}$. More precisely, consider $i\in[n]$. The distributivity of $\Lscr$ and the absence of elements of $\Lscr$ between $X_{i-1}$ and $X_i$ imply that
  \begin{numitem1} \label{eq:XiYXi}
for any $Y\in\Lscr$, the element $X_{i-1}\curlyvee(Y\curlywedge X_i)$ coincides with $(X_{i-1}\curlyvee Y)\curlywedge X_i$ and is equal to either $X_{i-1}$ or $X_i$.
 \end{numitem1}
 
Let $\Pscr_i:=\{Y\in\Lscr\colon (X_{i-1}\curlyvee Y)\curlywedge X_i=X_{i-1}\}$ and $\Dscr_i:=\{Y\in\Lscr\colon (X_{i-1}\curlyvee Y)\curlywedge X_i=X_i\}$. Using~\refeq{XiYXi}, we can see that $P_i$ is an ideal and $\Dscr_i$ is a filter. Also $\Dscr_i$ is complementary to $\Pscr_i$. So $\Pscr_i$ is a prime ideal. The following fact is of importance.
  \begin{prop} \label{pr:Pscr_i} {\rm(\cite[Th.~4]{birk})}
~$\Pscr_1,\ldots,\Pscr_n$ are exactly the prime ideals of $(\Lscr,\prec)$.
  \end{prop}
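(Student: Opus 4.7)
The plan is to use the characterization of ideals in a finite distributive lattice as principal down-sets (so each ideal $\Pscr$ has a maximum $M = {\rm MAX}(\Pscr)$, and $\Pscr$ is prime iff for all $X,Y \in \Lscr$, $X \curlywedge Y \preceq M$ implies $X \preceq M$ or $Y \preceq M$). The ingredient $\Pscr_i$ being an ideal and a prime one has already been observed just above the proposition (it is immediate from~\refeq{XiYXi} and the complementarity of $\Pscr_i$ and $\Dscr_i$). So two things remain: (a) the $\Pscr_i$ are pairwise distinct; and (b) any prime ideal coincides with some $\Pscr_i$.

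For distinctness, I would simply verify that the index $i$ can be recovered from $\Pscr_i$ by looking at the chain itself. For $j < i$ the element $X_j$ lies in $\Pscr_i$ since $X_{i-1} \curlyvee X_j = X_{i-1}$ and $X_{i-1} \curlywedge X_i = X_{i-1}$; whereas $X_i \notin \Pscr_i$ since $(X_{i-1} \curlyvee X_i) \curlywedge X_i = X_i \neq X_{i-1}$. Hence $i$ is the least index with $X_i \notin \Pscr_i$, which already rules out $\Pscr_i = \Pscr_j$ for $i \ne j$.

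For exhaustiveness, let $\Pscr$ be an arbitrary prime ideal with maximum $M = {\rm MAX}(\Pscr)$. Since $\Pscr$ is a proper ideal, $I \not\preceq M$ while $O \preceq M$, so there is a smallest index $i \in [n]$ with $X_i \not\preceq M$, and then $X_{i-1} \preceq M$. I claim $\Pscr = \Pscr_i$. In one direction, if $Y \preceq M$ then $X_{i-1} \curlyvee Y \preceq M$, so $(X_{i-1} \curlyvee Y) \curlywedge X_i$ cannot equal $X_i$ (else $X_i \preceq M$), and by~\refeq{XiYXi} it must be $X_{i-1}$, giving $Y \in \Pscr_i$. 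Conversely, suppose $Y \in \Pscr_i$. Distributivity applied to $(X_{i-1} \curlyvee Y) \curlywedge X_i = X_{i-1}$ yields $(Y \curlywedge X_i) \curlyvee (X_{i-1} \curlywedge X_i) = X_{i-1}$, i.e. $Y \curlywedge X_i \preceq X_{i-1} \preceq M$, so $Y \curlywedge X_i \in \Pscr$. Primality of $\Pscr$ now forces $Y \in \Pscr$ or $X_i \in \Pscr$; the latter contradicts $X_i \not\preceq M$, so $Y \in \Pscr$.

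The main (mild) obstacle is keeping the direction $(\Leftarrow)$ of the last step clean: one has to invoke distributivity to extract $Y \curlywedge X_i$ from the defining relation of $\Pscr_i$ and then use primality of $\Pscr$ to eliminate the alternative $X_i \in \Pscr$. Once distributivity and primality are wielded together, both inclusions become symmetric consequences of the trichotomy in~\refeq{XiYXi}, and the proposition follows.
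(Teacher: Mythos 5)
The paper does not actually prove Proposition \ref{pr:Pscr_i}: it cites it directly to Birkhoff's 1937 paper (\cite[Theorem~4]{birk}), having only verified in the lines immediately above that each $\Pscr_i$ is a prime ideal. So there is no in-paper argument to compare against; what you have supplied is a self-contained proof of the cited classical fact.

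Your proof is correct. For distinctness, recovering $i$ as the least index $k$ with $X_k\notin\Pscr_i$ works exactly as you say: for $j<i$, $X_j\preceq X_{i-1}$ gives $(X_{i-1}\curlyvee X_j)\curlywedge X_i = X_{i-1}$, and $(X_{i-1}\curlyvee X_i)\curlywedge X_i=X_i\ne X_{i-1}$. For exhaustiveness, the choice of $i$ as the least index with $X_i\not\preceq M$ is well-defined because $\Pscr$ is (tacitly, by the standard convention both you and the paper follow) proper and nonempty. The forward inclusion $\Pscr\subseteq\Pscr_i$ uses only the trichotomy~\refeq{XiYXi} and $X_i\not\preceq M$; the reverse inclusion $\Pscr_i\subseteq\Pscr$ correctly unwinds $(X_{i-1}\curlyvee Y)\curlywedge X_i=X_{i-1}$ via distributivity to $Y\curlywedge X_i\preceq X_{i-1}\preceq M$ and then invokes primality of $\Pscr$ (in the equivalent form ``$\Lscr-\Pscr$ closed under meet'') to kill the alternative $X_i\in\Pscr$. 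One small remark: you silently use that in a finite lattice every nonempty ideal is a principal down-set, which the paper itself also asserts without proof in its preamble, so this is fair.
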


So $(\Lscr,\prec)$ has as many prime ideals as the number $n$ of ``links'' in a maximal chain $\Cscr$, and the set of these can be constructed as above by taking such a $\Cscr$ arbitrarily.  
 
Next, following terminology of~\cite{birk}, by a \emph{prime factor} of the lattice $(\Lscr,\prec)$ one means any ``symbol'' $Y/X$, where $X,Y\in\Lscr$ and $X$ immediately precedes $Y$; for brevity, we will call $Y/X$ a \emph{p-factor}. Clearly for each p-factor $Y/X$, there is a maximal chain $(X_0,X_1,\ldots,X_n)$ such that $X=X_{i-1}$ and $Y=X_i$ for some $i\in[n]$. So there is a prime ideal $\Pscr$ such that $X\in \Pscr$ and $Y\in\Lscr-\Pscr$, and this $\Pscr$ is determined uniquely.

It follows that the set of all p-factors can be partitioned into $n$ groups, where each group is formed by the p-factors $Y/X$ related to the same prime ideal $\Pscr$; we call it the \emph{girdle} associated with $\Pscr$ and denote it as $\Gscr=\Gscr(\Pscr)$. One can see that $\Gscr(\Pscr)$ is exactly the set of p-factors $Y/X$ such that $X\in\Pscr$ and $Y\in\Lscr-\Pscr$. Note also that the p-factors in a girdle are pairwise disjoint, i.e.
  \begin{numitem1} \label{eq:distinct_cleav}
if $Y/X$ and $Y'/X'$ are distinct p-factors in $\Gscr(\Pscr)$, then $X\ne X'$ and $Y\ne Y'$.
  \end{numitem1}
Indeed, suppose that $X=X'$ and $Y\ne Y'$. Then $X\in\Pscr$ and $Y,Y'\in\Lscr-\Pscr$. By~\refeq{XiYXi} applied to $Y$ and $Y'/X$, we have $(X\curlyvee Y)\curlywedge Y'=Y'$. But $X\curlyvee Y=Y$ and $Y\curlywedge Y'\ne Y'$; a contradiction. The case $X\ne X'$ and $Y=Y'$ is examined similarly.

Now we arrange the useful partial order $\lessdot$ on the set $\frakG$ of girdles by setting $\Gscr\lessdot\Gscr'$ if for each maximal chain $\Cscr$, a representative (p-factor) of $\Gscr$ occurs in $\Cscr$ earlier than that of $\Gscr'$. For a girdle $\Gscr=\Gscr(\Pscr)$, let $\mu(\Gscr)$ denote the maximal element ${\rm MAX}(\Pscr)$ of $\Pscr$. The next assertion will be  important for our further purposes.
  \begin{lemma} \label{lm:two_girdles}
Let $\Gscr,\Gscr'\in\frakG$. Then $\Gscr\lessdot\Gscr'$ if and only if $\mu(\Gscr)\prec \mu(\Gscr')$.
 \end{lemma}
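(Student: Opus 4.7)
The key observation I would establish first is that for any prime ideal $\Pscr$ of $(\Lscr,\prec)$ and any maximal chain $\Cscr=(X_0,\ldots,X_n)$, the set $\{k : X_k\in\Pscr\}$ is an initial segment $\{0,1,\ldots,k(\Pscr,\Cscr)\}$. This uses that $\Pscr$ is downward closed ($Z\preceq X$ and $X\in\Pscr$ give $Z=Z\curlywedge X\in\Pscr$). The representative of $\Gscr(\Pscr)$ in $\Cscr$ is then the unique link $X_{k(\Pscr,\Cscr)+1}/X_{k(\Pscr,\Cscr)}$ at which $\Cscr$ exits $\Pscr$; by Proposition~\ref{pr:Pscr_i} every girdle has exactly one representative in $\Cscr$.

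\emph{Sufficiency} ($\mu(\Gscr)\prec\mu(\Gscr')\Rightarrow\Gscr\lessdot\Gscr'$). Writing $M:=\mu(\Gscr)$ and $M':=\mu(\Gscr')$, the hypothesis gives $\Pscr:=\{Y:Y\preceq M\}\subsetneq\Pscr':=\{Y:Y\preceq M'\}$. For any maximal chain $\Cscr$, the initial segment inside $\Pscr$ is contained in the one inside $\Pscr'$, so $k(\Pscr,\Cscr)\le k(\Pscr',\Cscr)$; since $\Gscr\ne\Gscr'$ forces distinct indices, the $\Gscr$-representative strictly precedes the $\Gscr'$-representative in $\Cscr$, yielding $\Gscr\lessdot\Gscr'$.

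\emph{Necessity} ($\Gscr\lessdot\Gscr'\Rightarrow\mu(\Gscr)\prec\mu(\Gscr')$). Suppose $\Gscr\lessdot\Gscr'$ but $M\not\prec M'$. Since $M\ne M'$, either $M'\prec M$ or $M,M'$ are incomparable. In the first subcase, sufficiency applied with roles swapped yields $\Gscr'\lessdot\Gscr$, which together with $\Gscr\lessdot\Gscr'$ is impossible in any maximal chain (the two distinct representatives cannot both come first). In the incomparable subcase, set $M'':=M\curlywedge M'$, so $M''\prec M$ and $M''\prec M'$ strictly. Refine any chain $O,\ldots,M'',\ldots,M,\ldots,I$ to a maximal chain $\Cscr$. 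Elements of $\Cscr$ past $M$ are $\succ M$, hence outside $\Pscr$, so the last element of $\Cscr$ inside $\Pscr$ is $M$ itself, putting the $\Gscr$-representative at the link right after $M$. As for $\Pscr'$: the portion $O,\ldots,M''$ lies in $\Pscr'$ (as $M''\preceq M'$); any $Y$ strictly between $M''$ and $M$ in $\Cscr$ satisfies $M''\prec Y\preceq M$, and $Y\in\Pscr'$ would give $Y\preceq M\curlywedge M'=M''$, a contradiction; finally, no element past $M$ is in $\Pscr'$ (else $M\preceq M'$). Thus the last element of $\Cscr$ inside $\Pscr'$ is $M''$, so the $\Gscr'$-representative occurs at the link right after $M''$, strictly before that of $\Gscr$, contradicting $\Gscr\lessdot\Gscr'$.

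The main obstacle is the incomparable subcase, where I need to exhibit an explicit maximal chain along which $\Gscr'$ comes first; this is what forces me to refine a chain through the meet $M\curlywedge M'$ and then through $M$, using downward closure of prime ideals at both steps. Everything else is a bookkeeping exercise about where a maximal chain exits a given prime ideal.
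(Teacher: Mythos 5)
Your proof is correct, and it organizes the argument somewhat differently from the paper's. The paper proceeds in two pieces: (a) \emph{incomparable} girdles force \emph{incomparable} maximal elements (using the two witness chains coming from the definition of incomparability of $\lessdot$), and (b) $\Gscr\lessdot\Gscr'$ implies $\mu(\Gscr)\prec\mu(\Gscr')$ (by running a maximal chain through $\mu(\Gscr)$ and reading off where the $\Gscr'$-representative must land). Your proof instead isolates the initial-segment observation as a standing tool, proves sufficiency by the containment $\Pscr\subsetneq\Pscr'$ (a clean direct argument that the paper reaches only by contraposing its part (a)), and handles necessity by contradiction, explicitly building a chain through $M\curlywedge M'$ and $M$ to exhibit a chain on which $\Gscr'$ comes first. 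The net effect is that you give a concrete witness for the failure of $\Gscr\lessdot\Gscr'$ where the paper appeals to the defining disjunction of incomparability, and you trade the paper's slick part (b) for a case split; both are valid and roughly the same length. One small point worth spelling out when you write it up: after establishing $k(\Pscr,\Cscr)\le k(\Pscr',\Cscr)$, you invoke ``$\Gscr\ne\Gscr'$ forces distinct indices'' — this is right, but the underlying reason is that the two girdles partition the set of p-factors and each maximal chain meets every girdle exactly once (Proposition~\ref{pr:Pscr_i}), so equal indices would make the same link belong to two girdles; stating this explicitly would make the step airtight.
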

  \begin{proof}
~Let $\Gscr=\Gscr(\Pscr)$ and $\Gscr'=\Gscr(\Pscr')$ be incomparable. Then there are maximal chains $\Cscr=(X_0,X_1,\ldots,X_n)$ and $\Cscr'=(Y_0,Y_1,\ldots,Y_n)$ that meet representatives of $\Gscr$ and $\Gscr'$ in different orders, say, $\Gscr$ contains $X_i/X_{i-1}$ and $Y_j/Y_{j-1}$, $\Gscr'$ contains $X_k/X_{k-1}$ and $Y_\ell/Y_{\ell-1}$, where $i<k$ and $\ell<j$. 

Suppose that the maximal elements of $\Pscr$ and $\Pscr'$ are comparable, say, $\mu(\Gscr)\prec \mu(\Gscr')$. We have $Y_\ell\preceq Y_{j-1}$ (since $\ell<j$) and $Y_{j-1}\preceq\mu(\Gscr)$ (since $Y_{j-1}\in\Pscr$ and $\mu(\Gscr)={\rm MAX}(\Pscr)$). This implies $Y_\ell\prec \mu(\Gscr')$. But $\mu(\Gscr')$ belongs to $\Pscr'$, while $Y_\ell$ does to the filter $\Lscr-\Pscr'$; a contradiction. Thus, $\mu(\Gscr)$ and $\mu(\Gscr')$ are incomparable, as required.

Now let $\Gscr$ and $\Gscr'$ be comparable, say, $\Gscr\lessdot \Gscr'$. Then each maximal chain $\Cscr$ meets $\Gscr$ earlier than $\Gscr'$. Considering $\Cscr$ that contains the element $\mu(\Gscr)$ and taking the member $Y/X$ of $\Gscr'$ occurring in $\Cscr$, we have $\mu(G)\prec X\preceq \mu(\Gscr')$, as required.  
  \end{proof} 
  
One more useful property is as follows:
 \begin{numitem1} \label{eq:MIN-MAX}
for prime ideals $\Pscr,\Pscr'$ and the filters $\Dscr:=\Lscr-\Pscr$ and $\Dscr':=\Lscr-\Pscr'$, if ${\rm MAX}(\Pscr)\prec {\rm MAX}(\Pscr')$, then ${\rm MIN}(\Dscr)\prec {\rm MIN}(\Dscr')$, and vice versa.
 \end{numitem1}
Indeed, ${\rm MAX}(\Pscr)\prec {\rm MAX}(\Pscr')$ implies that $\Pscr\subset \Pscr'$. Then $\Dscr\supset \Dscr'$, whence ${\rm MIN}(\Dscr)\prec {\rm MIN}(\Dscr')$. The converse is easy as well.
  \smallskip
  
For a girdle $\Gscr=\Gscr(\Pscr)$, we will write $\nu(\Gscr)$ for ${\rm MIN}(\Lscr-\Pscr)$. Lemma~\ref{lm:two_girdles} and~\refeq{MIN-MAX} imply the following
  \begin{corollary} \label{cor:agree}
~$\Gscr,\Gscr'\in\frakG$ satisfy $\Gscr\lessdot \Gscr'$ if and only if $\nu(\Gscr)\prec \nu(\Gscr')$.
 \end{corollary}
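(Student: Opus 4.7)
The plan is to derive the corollary by directly chaining the two results immediately preceding it, namely Lemma~\ref{lm:two_girdles} and the observation~\refeq{MIN-MAX}. Write $\Gscr=\Gscr(\Pscr)$ and $\Gscr'=\Gscr(\Pscr')$, so $\mu(\Gscr)={\rm MAX}(\Pscr)$, $\mu(\Gscr')={\rm MAX}(\Pscr')$ and $\nu(\Gscr)={\rm MIN}(\Lscr-\Pscr)$, $\nu(\Gscr')={\rm MIN}(\Lscr-\Pscr')$. Let $\Dscr:=\Lscr-\Pscr$ and $\Dscr':=\Lscr-\Pscr'$ denote the filters complementary to the prime ideals.

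The key step is to form the chain of equivalences
\[
\Gscr\lessdot\Gscr'\;\Longleftrightarrow\; \mu(\Gscr)\prec\mu(\Gscr')\;\Longleftrightarrow\;{\rm MIN}(\Dscr)\prec{\rm MIN}(\Dscr')\;\Longleftrightarrow\;\nu(\Gscr)\prec\nu(\Gscr'),
\]
where the first equivalence is Lemma~\ref{lm:two_girdles} applied to the girdles $\Gscr,\Gscr'$, the middle equivalence is~\refeq{MIN-MAX} applied to the prime ideals $\Pscr,\Pscr'$ (this is legitimate since $\Gscr,\Gscr'\in\frakG$ are precisely associated with prime ideals), and the last equivalence is the defining notation for $\nu$.

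There is essentially no obstacle here: the corollary is just the ``dual'' restatement of Lemma~\ref{lm:two_girdles} via the bijection $\Pscr\leftrightarrow\Lscr-\Pscr$ between prime ideals and their complementary filters. The only minor point to verify is that the comparisons are strict throughout, which follows because ${\rm MAX}(\Pscr)\prec{\rm MAX}(\Pscr')$ forces $\Pscr\subsetneq\Pscr'$ and hence $\Dscr\supsetneq\Dscr'$, yielding ${\rm MIN}(\Dscr)\prec{\rm MIN}(\Dscr')$ strictly (and symmetrically for the converse direction). Since Lemma~\ref{lm:two_girdles} and~\refeq{MIN-MAX} are both biconditional, no further argument is needed.
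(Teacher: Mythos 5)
Your proof is correct and is exactly the argument the paper intends: chain Lemma~\ref{lm:two_girdles} ($\Gscr\lessdot\Gscr'\Leftrightarrow\mu(\Gscr)\prec\mu(\Gscr')$) with property~\refeq{MIN-MAX} (duality between the prime ideal and its complementary filter), unwinding the notation $\mu={\rm MAX}(\Pscr)$, $\nu={\rm MIN}(\Lscr-\Pscr)$. The paper simply states that these two facts imply the corollary, so your explicit chaining and strictness check match its reasoning.
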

 
The above properties enable us to obtain a poset representation for a finite distributive lattice $(\Lscr,\prec)$ that we will essentially use for our stability model (this representation is somewhat different from the one in assertion $(9\alpha)$ of~\cite{birk}). It is established in terms of closed subsets in the poset $(\frakG,\lessdot)$, where a subset $\Ascr\subseteq \frakG$ is called \emph{closed} if $\Gscr\in \Ascr$, $\Gscr'\in\frakG$ and $\Gscr'\lessdot \Gscr$ imply $\Gscr'\in\Ascr$. 

For an element $X\in\Lscr$ and a maximal chain $\Cscr$ containing $X$, let $\sigma(X)$ denote the set of girdles that meet $\Cscr$ before $X$ (including the p-factor of the form $X/X'$). (This $\sigma(X)$ does not depend on $\Cscr$; for one can see that a girdle $\Gscr$ meets the part of $\Cscr$ between $O$ and $X$ if and only if it does so for any other maximal chain passing $X$.)

The collection $\frakC $ of closed sets in $(\frakG,\lessdot)$ preserves under taking intersections and unions, forming a distributive lattice (a ``ring of sets'' in $\frakG$) with $\emptyset$ and $\frakG$ as the minimal and maximal element, respectively. We can show the following (cf.~\cite[$(9\alpha)$]{birk})
 \begin{prop} \label{pr:poset_repr}
~The correspondence $X\mapsto\sigma(X)$ gives a bijection between $\Lscr$ and $\frakC$. Furthermore, $X\prec Y$ implies $\sigma(X)\subset \sigma(Y)$, and vice versa (i.e. $\sigma$ establishes an isomorphism between the lattices $(\Lscr,\prec)$ and $(\frakC,\subset)$).
  \end{prop}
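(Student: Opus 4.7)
The plan is to first give a chain-free description of $\sigma(X)$, then prove the key identity $X = \curlyvee\{\nu(\Gscr) : \Gscr \in \sigma(X)\}$, and read off bijectivity and monotonicity from this identity.

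First I would observe that any prime ideal $\Pscr$ is downward closed and that a maximal chain $\Cscr=(X_0,\ldots,X_n)$ runs from $O\in \Pscr$ to $I\notin \Pscr$; hence $\Gscr(\Pscr)$ has exactly one representative in $\Cscr$, located at the unique step where the chain leaves $\Pscr$. For a chain through $X$ this representative sits at or before $X$ precisely when $X\notin \Pscr$, i.e.\ $\nu(\Gscr(\Pscr))\preceq X$. So
$$
\sigma(X)=\{\Gscr\in\frakG : \nu(\Gscr)\preceq X\},
$$
which makes independence from $\Cscr$ manifest. Closedness of $\sigma(X)$ in $(\frakG,\lessdot)$ is immediate from Corollary~\ref{cor:agree}: if $\Gscr\lessdot\Gscr'\in\sigma(X)$, then $\nu(\Gscr)\prec\nu(\Gscr')\preceq X$.

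Next I would prove by induction along a maximal chain $(X_0,\ldots,X_n)$ through $X=X_k$ that
$$
X_k=\curlyvee\{\nu(\Gscr):\Gscr\in\sigma(X_k)\}.
$$
For the step, let $\Gscr_k$ be the girdle containing the p-factor $X_k/X_{k-1}$. Then $\nu(\Gscr_k)\preceq X_k$ but $\nu(\Gscr_k)\not\preceq X_{k-1}$, so $X_{k-1}\curlyvee\nu(\Gscr_k)$ strictly exceeds $X_{k-1}$ yet is bounded by $X_k$, and therefore equals $X_k$ since $X_k$ immediately succeeds $X_{k-1}$. Combined with the inductive hypothesis and the disjoint decomposition $\sigma(X_k)=\sigma(X_{k-1})\cup\{\Gscr_k\}$ (read off from the chain characterization), this yields the identity. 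Injectivity of $\sigma$ is immediate, and the non-strict monotonicity $X\preceq Y\Longleftrightarrow \sigma(X)\subseteq \sigma(Y)$ follows: the forward direction from the characterization of $\sigma$, the reverse by taking joins on both sides of the inclusion. The strict versions follow at once from injectivity.

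The main obstacle is surjectivity: given a closed $\Ascr\subseteq\frakG$, set $X_\Ascr:=\curlyvee\{\nu(\Gscr):\Gscr\in\Ascr\}$ and argue $\sigma(X_\Ascr)=\Ascr$. The nontrivial inclusion $\sigma(X_\Ascr)\subseteq\Ascr$ will rest on two auxiliary facts. First, each $\nu(\Gscr^*)$ is join-prime in $(\Lscr,\prec)$: the complement $\Dscr^*$ of the associated prime ideal is a filter, so $A\curlyvee B\in\Dscr^*$ forces $A\in\Dscr^*$ or $B\in\Dscr^*$ (otherwise both would lie in the ideal, forcing their join there too), which translates to $\nu(\Gscr^*)\preceq A\curlyvee B\Rightarrow \nu(\Gscr^*)\preceq A$ or $\nu(\Gscr^*)\preceq B$. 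Second, $\Gscr\mapsto\nu(\Gscr)$ is injective, since in a finite lattice every prime filter is principal (determined by its minimum). Applying join-primeness iteratively to the defining expression of $X_\Ascr$, any $\Gscr^*\in\sigma(X_\Ascr)$ satisfies $\nu(\Gscr^*)\preceq \nu(\Gscr)$ for some $\Gscr\in\Ascr$; by injectivity either $\Gscr^*=\Gscr\in\Ascr$, or else $\nu(\Gscr^*)\prec\nu(\Gscr)$, whence $\Gscr^*\lessdot\Gscr$ by Corollary~\ref{cor:agree} and closedness of $\Ascr$ forces $\Gscr^*\in\Ascr$.
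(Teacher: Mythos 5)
Your proof is correct and takes essentially the same approach as the paper: the heart of both surjectivity arguments is the interplay between the join-closure of prime ideals and the assignment $\Gscr\mapsto\nu(\Gscr)$ (the paper phrases it directly as "all $\nu(\Gscr)$ lie in $\Pscr'$, hence so does their join", whereas you make the underlying join-primeness of the elements $\nu(\Gscr)$ explicit — these are the same fact). Your version is somewhat more complete than the paper's, which only verifies closedness and surjectivity and leaves injectivity and the order-isomorphism implicit; the chain-free description $\sigma(X)=\{\Gscr:\nu(\Gscr)\preceq X\}$ and the identity $X=\curlyvee\{\nu(\Gscr):\Gscr\in\sigma(X)\}$ are a clean organizing device that makes those remaining claims immediate.
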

\begin{proof}
~One direction is easy: for each $X\in\Lscr$, the set $\sigma(X)$ is closed. Indeed, take a maximal chain $\Cscr$ passing $X$. If a girdle $\Gscr$ meets the part of $\Cscr$ from $O$ to $X$, then any $\Gscr'$ with $\Gscr'\lessdot \Gscr$ does so as well.

To see the other direction, consider a closed set $\Ascr\subseteq \frakG$. For  a girdle $\Gscr=\Gscr(\Pscr)\in\Ascr$, take the minimal element $\nu(\Gscr)$ in the filter $\Lscr-\Pscr$, and consider an arbitrary maximal chain $\Cscr$ passing $\nu(\Gscr)$. Let $(X_0,X_1,\ldots,X_k)$ be the part of $\Cscr$ from $O=X_0$ to $\nu(\Gscr)=X_k$, and for each $i\in[k]$, let $\Gscr_i=\Gscr(\Pscr_i)$ be the girdle containing $X_i/X_{i-1}$. One can see that $\Gscr=\Gscr_k$ and that the girdles $\Gscr'$ satisfying $\Gscr'\lessdot \Gscr$ are exactly $\Gscr_1,\ldots,\Gscr_{k-1}$.

It follows that $\sigma(X_k)=\{\Gscr_1,\ldots,\Gscr_k\}$. Now take the join $X\in\Lscr$ of elements $\nu(\Gscr)$ over $\Gscr\in\Ascr$ (equivalently, over the maximal girdles in $\Ascr$). Consider a grid $\Gscr'\in \sigma(X)$ and let $\Gscr'=\Gscr(\Pscr')$; then $X$ belongs to the filter $\Dscr':=\Lscr-\Pscr'$. Suppose there is $\Gscr\in\Ascr$ such that $\nu(\Gscr)\in\Dscr'$. Then $\Gscr'\,\underline{\lessdot}\,\, \Gscr$ (by reasonings above), and therefore, $\Gscr'\in\Ascr$. Now suppose that $\nu(\Gscr)\notin \Dscr'$ for all $\Gscr\in\Ascr$. Then all elements $\nu(\Gscr)$, $\Gscr\in\Ascr$, belong to $\Pscr'$, whence their join $X$ is in $\Pscr'$ as well; a contradiction. Thus, $\sigma(X)=\Ascr$.
\end{proof}

We finish this subsection with two more useful facts (needed to us later).
  \begin{lemma} \label{lm:cross}
Girdles $\Gscr=\Gscr(\Pscr)$ and $\Gscr'=\Gscr(\Pscr')$ are incomparable if and only if there are $X,Y,Z\in\Lscr$ such that $Y/X\in\Gscr$ and $Z/X\in\Gscr'$.
  \end{lemma}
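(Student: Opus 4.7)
The plan is to work inside the poset representation of $(\Lscr,\prec)$ provided by Proposition~\ref{pr:poset_repr}: the map $\sigma:(\Lscr,\prec)\to(\frakC,\subset)$ is a lattice isomorphism onto the closed (down-closed) subsets of $(\frakG,\lessdot)$. The central fact I will use repeatedly is that whenever $Y$ immediately covers $X$ in $\Lscr$, one has $\sigma(Y)=\sigma(X)\cup\{\Gscr\}$, where $\Gscr$ is the girdle containing the p-factor $Y/X$, and $\Gscr\notin\sigma(X)$. This is immediate from evaluating the (chain-independent) definition of $\sigma$ on a maximal chain passing through the cover $X\prec Y$: the last girdle to enter $\sigma$ is precisely the one holding $Y/X$.

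For the sufficiency direction $(\Leftarrow)$, I would assume $X,Y,Z$ are as in the statement and read off $\sigma(Y)=\sigma(X)\cup\{\Gscr\}$ and $\sigma(Z)=\sigma(X)\cup\{\Gscr'\}$; in particular $\Gscr,\Gscr'\notin\sigma(X)$. Suppose, for contradiction, that $\Gscr\lessdot\Gscr'$. Then $\Gscr\ne\Gscr'$ (strictness of $\lessdot$), and closedness of $\sigma(Z)$ forces $\Gscr\in\sigma(Z)=\sigma(X)\cup\{\Gscr'\}$, so $\Gscr\in\sigma(X)$; a contradiction. The symmetric argument excludes $\Gscr'\lessdot\Gscr$, so $\Gscr$ and $\Gscr'$ are incomparable.

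For the necessity direction $(\Rightarrow)$, starting from incomparable $\Gscr,\Gscr'\in\frakG$, I would build $X,Y,Z$ inside the representation. Set
\[
\Ascr:=\{\Gscr_1\in\frakG:\Gscr_1\lessdot\Gscr\}\cup\{\Gscr_1\in\frakG:\Gscr_1\lessdot\Gscr'\};
\]
by transitivity of $\lessdot$, $\Ascr$ is closed. By incomparability, $\Gscr,\Gscr'\notin\Ascr$, and since every $\Gscr_1\lessdot\Gscr$ (resp.\ $\Gscr_1\lessdot\Gscr'$) already lies in $\Ascr$, both $\Gscr$ and $\Gscr'$ are minimal elements of $\frakG\setminus\Ascr$. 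Consequently, $\Ascr\cup\{\Gscr\}$ and $\Ascr\cup\{\Gscr'\}$ are again closed and each covers $\Ascr$ in $(\frakC,\subset)$. Using Proposition~\ref{pr:poset_repr}, take $X,Y,Z\in\Lscr$ with $\sigma(X)=\Ascr$, $\sigma(Y)=\Ascr\cup\{\Gscr\}$, and $\sigma(Z)=\Ascr\cup\{\Gscr'\}$. Because $\sigma$ is a lattice isomorphism, $Y$ and $Z$ each immediately cover $X$ in $\Lscr$; and since $\sigma(Y)\setminus\sigma(X)=\{\Gscr\}$ (resp.\ $\sigma(Z)\setminus\sigma(X)=\{\Gscr'\}$), the p-factor $Y/X$ lies in $\Gscr$ and $Z/X$ in $\Gscr'$, as required.

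The only delicate point is the identification of the girdle of $Y/X$ with the singleton $\sigma(Y)\setminus\sigma(X)$, which I would dispatch by unpacking the chain-based definition of $\sigma$ recalled just before Proposition~\ref{pr:poset_repr}. Beyond that I foresee no difficulty: at heart the lemma is the familiar order-theoretic fact that two elements of a poset are incomparable exactly when some down-set admits both of them as minimal extensions.
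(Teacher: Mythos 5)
Your proof is correct, but it takes a genuinely different route from the paper's. The paper argues directly with prime ideals and chains: for necessity, it sets $X:=\mu(\Gscr)\curlywedge\mu(\Gscr')$, picks maximal chains through $X$ to $\mu(\Gscr)$ and to $\mu(\Gscr')$, and shows that the p-factors immediately above $X$ in those chains must belong to $\Gscr$ and $\Gscr'$ respectively; for sufficiency, it reads off directly from $X\in\Pscr\cap\Pscr'$, $Y\in(\Lscr-\Pscr)\cap\Pscr'$, $Z\in(\Lscr-\Pscr')\cap\Pscr$ that $\mu(\Gscr)\in\Lscr-\Pscr'$ and $\mu(\Gscr')\in\Lscr-\Pscr$, forcing incomparability of $\mu(\Gscr)$, $\mu(\Gscr')$ and hence of $\Gscr$, $\Gscr'$ by Lemma~\ref{lm:two_girdles}. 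You instead funnel everything through the isomorphism $\sigma$ of Proposition~\ref{pr:poset_repr}, reducing the lemma to the purely order-theoretic observation that two elements of a poset are incomparable exactly when some down-set admits both as one-element extensions. This is legitimate since Proposition~\ref{pr:poset_repr} precedes the lemma and its proof does not invoke it; the only place you are slightly terse is in justifying $\Gscr(Y/X)\notin\sigma(X)$ when $Y$ covers $X$ (it follows because the prime ideals $\Pscr_1,\ldots,\Pscr_n$ attached to the links of a maximal chain are pairwise distinct by Proposition~\ref{pr:Pscr_i}, or alternatively because $X\in\Pscr$ forces every $X_j\preceq X$ into $\Pscr$). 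Your approach is cleaner and more conceptual; the paper's is self-contained at a lower level, using only Lemma~\ref{lm:two_girdles} and chain manipulations, which keeps the dependence footprint smaller.
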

  \begin{proof}
Let $\Gscr=\Gscr(\Pscr)$ and $\Gscr'=\Gscr(\Pscr')$ be incomparable. Consider the elements $\mu(\Gscr)$ ($={\rm MAX}(\Pscr)$) and $\mu(\Gscr')$ ($={\rm MAX}(\Pscr')$), and let $X$ be their meet $\mu(\Gscr)\curlywedge \mu(\Gscr')$. Take maximal chains $\Cscr,\Cscr'$ such that $\Cscr$ passes $X$ and $\mu(\Gscr)$, and $\Cscr'$ passes $X$ and $\mu(\Gscr')$. The incomparability of $\Gscr$ and $\Gscr'$ (equivalently, of $\mu(\Gscr)$ and $\mu(\Gscr')$, by Lemma~\ref{lm:two_girdles}) implies that $\Gscr$ meets $\Cscr'$ with a p-factor contained in its part from $X$ to $\mu(\Gscr')$, while $\Gscr'$ meets $\Cscr$ with a p-factor contained in its part from $X$ to $\mu(\Gscr)$. 

Moreover, one can see that $\Gscr$ contains $Y/X$ and $\Gscr'$ contains $Z/X$, where $Y$ ($Z$) is the element of $\Cscr'$ (resp. $\Cscr$) next to $X$. (For if, say, $\Gscr$ meets $\Cscr'$ with a p-factor $Y'/X'$ such that $X'\ne X$, then $X\prec X'\prec \mu(\Gscr')$. Also $X'\in\Pscr$ implies $X'\prec\mu(\Gscr)$. Then $X$ cannot be $\mu(\Gscr)\curlywedge\mu(\Gscr')$; a contradiction.) So $X,Y,Z$ are as required. 

Conversely, suppose there are $X,Y,Z$ such that $Y/X\in \Gscr=\Gscr(\Pscr)$ and $Z/X\in\Gscr'=\Gscr(\Pscr')$. Let $\Dscr:=\Lscr-\Pscr$ and $\Dscr':=\Lscr-\Pscr'$. Then $X\in\Pscr\cap\Pscr'$, $Y\in\Dscr\cap \Pscr'$ and $Z\in\Dscr'\cap\Pscr$. It follows that $\mu(\Gscr)\in\Dscr'$ and $\mu(\Gscr')\in\Dscr$. This is possible only if $\mu(\Gscr)$ and $\mu(\Gscr')$ are incomparable (under $\prec$). Then $\Gscr,\Gscr'$ are incomparable as well.
  \end{proof}.
 \begin{lemma} \label{lm:immed_suc}
Let $\Gscr=\Gscr(\Pscr)\in\frakG$, $X:=\mu(\Gscr)$ and $Y/X\in\Gscr$. Let $Z_1,\ldots,Z_k$ be the immediate successors of $Y$ in $(\Lscr,\prec)$. Then the girdles containing the p-factors $Z_1/Y,\ldots, Z_k/Y$ are exactly the immediate successors of $\Gscr$ in $(\frakG,\lessdot)$.
 \end{lemma}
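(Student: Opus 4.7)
The plan is to pass through Corollary~\ref{cor:agree}, which turns the order $\lessdot$ on girdles into the order $\prec$ on their $\nu$-values. First I would observe that the hypothesis $X=\mu(\Gscr)$ together with $Y/X\in\Gscr$ forces $Y=\nu(\Gscr)$; this follows from a general fact I would briefly record, namely that for every girdle the elements $\mu$ and $\nu$ are an immediate cover in $\Lscr$ (any strictly intermediate element would lie in $\Pscr$ or in $\Lscr-\Pscr$, contradicting maximality of $\mu$ or minimality of $\nu$). After this reduction, the claim becomes: the values $\nu(\Gscr_i)$, with $\Gscr_i$ the girdle of $Z_i/Y$, range exactly over the immediate successors of $Y=\nu(\Gscr)$ in $\Lscr$. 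Note that distinct $Z_i$ give distinct $\Gscr_i$ by~\refeq{distinct_cleav}.

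The crux of the argument is the identity $\mu(\Gscr_i)=Y$ and $\nu(\Gscr_i)=Z_i$. Writing $\Gscr_i=\Gscr(\Pscr_i)$ with complementary filter $\Dscr_i$, the memberships $Y\in\Pscr_i$ and $Z_i\in\Dscr_i$ give $Y\preceq\mu(\Gscr_i)$ and $\nu(\Gscr_i)\preceq Z_i$. Combining with $\mu(\Gscr_i)\prec\nu(\Gscr_i)$ being an immediate cover in $\Lscr$, we obtain a chain
$$
Y\preceq \mu(\Gscr_i)\prec \nu(\Gscr_i)\preceq Z_i
$$
squeezed inside the immediate cover $Y\prec Z_i$. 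Since $\mu(\Gscr_i)\in\Pscr_i$ and $Z_i\in\Dscr_i$ are distinct, the only way to collapse this chain is $\mu(\Gscr_i)=Y$ and $\nu(\Gscr_i)=Z_i$. This squeeze is the only genuinely nontrivial step, and I expect to be the main obstacle; everything else then follows formally from Corollary~\ref{cor:agree}.

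With this identity in hand, the forward direction is immediate: Corollary~\ref{cor:agree} gives $\Gscr\lessdot\Gscr_i$, and any intermediate $\Gscr''$ with $\Gscr\lessdot\Gscr''\lessdot\Gscr_i$ would satisfy $Y\prec\nu(\Gscr'')\prec Z_i$, contradicting the immediacy of $Z_i$ over $Y$ in $\Lscr$. For the converse, given an immediate successor $\Gscr'$ of $\Gscr$ and setting $Y'=\nu(\Gscr')$, I would show $Y'$ is an immediate successor of $Y$ in $\Lscr$: if not, take $W$ with $Y\prec W\prec Y'$ and a maximal chain through $Y$, $W$, $Y'$; the p-factor just above $Y$ on that chain has the form $Z/Y$ for some immediate successor $Z$ of $Y$ with $Z\preceq W$, and applying the main identity to $Z$ yields a girdle $\Gscr''$ with $\nu(\Gscr'')=Z$, producing $\Gscr\lessdot\Gscr''\lessdot\Gscr'$ strictly and contradicting immediacy of $\Gscr'$ over $\Gscr$. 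Hence $Y'=Z_i$ for some $i$, and since a girdle is determined by its $\nu$-value (distinct prime filters, being upward closures of their minima, have distinct minima), $\Gscr'=\Gscr_i$, completing the proof.
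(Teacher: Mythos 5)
Your proposal rests on a claim that is false in general: that for every girdle $\Gscr=\Gscr(\Pscr)$ the elements $\mu(\Gscr)={\rm MAX}(\Pscr)$ and $\nu(\Gscr)={\rm MIN}(\Lscr-\Pscr)$ form an immediate cover in $\Lscr$. Already in the Boolean lattice $\Lscr=\{\emptyset,\{a\},\{b\},\{a,b\}\}$, for the prime ideal $\Pscr=\{\emptyset,\{b\}\}$ one has $\mu(\Gscr)=\{b\}$ and $\nu(\Gscr)=\{a\}$, which are incomparable; the parenthetical justification you give tacitly assumes $\mu\prec\nu$, which need not hold. Consequently the two deductions you build on this — that $X=\mu(\Gscr)$ with $Y/X\in\Gscr$ forces $Y=\nu(\Gscr)$, and the ``crux'' identity $\mu(\Gscr_i)=Y$ obtained by the squeeze $Y\preceq\mu(\Gscr_i)\prec\nu(\Gscr_i)\preceq Z_i$ — are also false, and the squeeze argument itself has no starting point. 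A concrete counterexample to $\mu(\Gscr_i)=Y$: take $\Lscr$ to be the lattice of down-sets of the poset $a<b$, $a<c$, let $\Gscr$ be the girdle of the prime ideal $\{\emptyset\}$, so $X=\emptyset$, $Y=\{a\}$, $Z_1=\{a,b\}$; the girdle $\Gscr_1$ of $Z_1/Y$ corresponds to the prime ideal $\{\emptyset,\{a\},\{a,c\}\}$ with $\mu(\Gscr_1)=\{a,c\}\neq Y$.

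The paper's proof follows a different route that does not pass through any $\mu$--$\nu$ cover fact. It establishes forward containment from Lemma~\ref{lm:cross} (the $\Gscr_i$ are pairwise incomparable) together with Lemma~\ref{lm:two_girdles} ($Y\in\Pscr_i$ and $Y\succ X=\mu(\Gscr)$ give $\Gscr\lessdot\Gscr_i$), and then handles an arbitrary $\Gscr'\gtrdot\Gscr$ by a case split on whether some $Z_i$ escapes $\Pscr'$; the hard case (all $Z_i\in\Pscr'$) is excluded by a lattice-theoretic argument that compares two dense chains from $Y$ up to $\mu(\Gscr_1)$ and $\mu(\Gscr_2)$ and invokes~\refeq{XiYXi} and~\refeq{distinct_cleav}. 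What your proposal does salvage correctly is the observation that distinct $Z_i$ give distinct $\Gscr_i$ via~\refeq{distinct_cleav}, and the related fact (worth keeping) that $X=\mu(\Gscr)$ has a unique immediate successor in $\Lscr$, but the bulk of the argument needs to be rebuilt along the paper's lines.
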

   \begin{proof}
~For $i=1,\ldots,k$, let $\Gscr_k=\Gscr(\Pscr_k)$ be the girdle containing $Z_i/Y$. The cases $k=0,1$ are trivial, so assume that $k\ge 2$. By Lemma~\ref{lm:cross}, the girdles $\Gscr_1,\ldots,\Gscr_k$ are pairwise incomparable. Since each prime ideal $\Pscr_i$ contains the element $Y$ which is greater than $X=\mu(\Gscr)$, all girdles $\Gscr_i$ satisfy $\Gscr\lessdot \Gscr_i$.  To see that they form exactly the set of  immediate successors of $\Gscr$, consider an arbitrary girdle $\Gscr'=\Gscr(\Pscr')$ such that $\Gscr\lessdot \Gscr'$. Then $Y\in\Pscr'$ and $\{Z_1,\ldots,Z_k\}\cap\Pscr'\ne\emptyset$. Two cases are possible: (a) there is $Z_i$ not in $\Pscr'$, and (b) $\{Z_1,\ldots,Z_k\}\subseteq\Pscr'$.

In case~(a), we have $Z_i/Y\in\Gscr'$, whence $\Gscr'=\Gscr_i$. So assume that we are in case~(b). Then $\Gscr'\ne\Gscr_i$ for all $i$. If there is $i\in[k]$ such that $\mu(\Gscr_i)\in \Pscr'$, then $\Gscr_i\lessdot\Gscr'$, whence $\Gscr'$ is not an immediate successor of $\Gscr$. Now suppose that $\mu(\Gscr_i)\notin \Pscr'$ for all $i$. We assert that this is not the case. To show this, take a dense chain $\Cscr_1=(U_0,\ldots,U_p)$ from $U_0=Y$ to $U_p=\mu(\Gscr_1)$, and a dense chain $\Cscr_2=(V_0,\ldots,V_q)$ from $V_0=Y$ to $V_q=\mu(\Gscr_2)$. Since $\mu(\Gscr_i)\notin \Pscr'$ for $i=1,2$, there are $1\le\alpha<p$ and $1\le\beta<q$ such that $U_\alpha,V_\beta\in \Pscr'$ but $U_{\alpha+1},V_{\beta+1}\in \Lscr-\Pscr'=:\Dscr'$. 
Consider the elements 
 \begin{equation} \label{eq:ABCUV}
 A:=U_\alpha\curlyvee V_\beta,\quad B:=A\curlyvee U_{\alpha+1}\quad\mbox{and}\quad C:=A\curlyvee V_{\beta+1}.
 \end{equation}
Then $U_\alpha,V_\beta\in\Pscr'$ implies $A\in\Pscr'$. Also $U_{\alpha+1}\in\Dscr'$ and $U_{\alpha+1}\preceq B$ imply $B\in \Dscr'$. Similarly $C\in \Dscr'$. We can conclude from~\refeq{ABCUV} that  $B,C$ are immediately successors of $A$. Then $B/A,C/A\in\Gscr'$. This implies $B=C$, by~\refeq{distinct_cleav}. Using this, we observe that
  $$
  V_\beta\curlyvee U_{\alpha+1}=V_\beta\curlyvee (U_\alpha\curlyvee U_{\alpha+1})=(V_\beta\curlyvee U_\alpha)\curlyvee U_{\alpha+1}=A\curlyvee U_{\alpha+1}=B=C\succeq V_{\beta+1}.
  $$
On the other hand, by reasonings as in the proof of Lemma~\ref{lm:cross}, $\mu(\Gscr_1)\curlywedge \mu(\Gscr_2)=Y$. It follows that $U_{\alpha+1}\curlywedge V_{\beta+1}=Y\prec V_\beta$.

So we obtain $(V_\beta\curlyvee U_{\alpha+1})\curlywedge V_{\beta+1}=V_{\beta+1}$, and at the same time, $V_\beta\curlyvee (U_{\alpha+1}\curlywedge V_{\beta+1})=V_{\beta}$. This contradicts~\refeq{XiYXi} for $V_\beta/V_{\beta+1}$ and $U_{\alpha+1}$, yielding the result.
   \end{proof}
   
\noindent\textbf{Remark 2.}
Assume that the lattice $(\Lscr,\prec)$ is given via an oracle that, being asked of an element $X\in\Lscr$, outputs the set $I(X)$ of immediate successors of $X$ in the lattice. Then arguing as above, one can explicitly construct the set $\Mscr$ of maximal elements of all prime ideals and the corresponding partial order on $\Mscr$, obtaining a poset isomorphic to $(\frakG,\lessdot)$. A procedure for this task is as follows. 

Initially, using oracle calls, we construct, step by step, an arbitrary maximal chain $\Cscr_0=(X_0,X_1,\ldots,X_n)$. This gives the set of p-factors $\{X_i/X_{i-1}\colon i=1,\ldots,n\}$, and for each $i$, we find the maximal element $M_i:={\rm MAX}(\Pscr_i)$ of the prime ideal $\Pscr_i$ corresponding to $X_i/X_{i-1}$  (i.e. satisfying $X_{i-1}\in\Pscr_i\not\ni X_i$). This is reduced to constructing two sequences $X'_0,X'_1,\ldots$ and $Y'_0,Y'_1,\ldots$ as follows. Put $X'_0:=X_{i-1}$ and $Y'_0:=X_i$. At the first step, if $I(X'_0)$ consists of a unique element (namely, $Y'_0=X_i$), then $X'_0$ is just the required $M_i$. Otherwise take as $X'_1$ an arbitrary element in $I(X'_0)$ different from $Y'_0$, and put $Y'_1:=X'_1\curlyvee Y'_0$. Then $Y'_1/X'_1$ belongs to the girdle $\Gscr_i:=\Gscr(\Pscr_i)$. At the second step we handle $X'_1,Y'_1$ in a similar way, either obtaining $M_i=X'_1$ (when $|I(X'_1)|=1$), or constructing the next elements $X'_2$ and $Y'_2$. And so on, until at some, $p$-th say, step, we obtain $X'_p$ with $|I(X'_p)|=1$, which is just $M_i$.

Now the desired partial order on $\Mscr$ is constructed by appealing to Lemma~\ref{lm:immed_suc}. Namely, for each $i\in[n]$, take the single element $Y$ of $I(M_i)$, compute the set $I(Y)$, say, $\{Z_1,\ldots,Z_k\}$, and for each $j\in[k]$, acting as above, find the maximal element $M_{\alpha(j)}$ of the prime ideal $\Pscr_{\alpha(j)}$ corresponding to $Z_j/Y$ (i.e. such that  $Y\in\Pscr_{\alpha(j)}\not\ni Z_j$). Then $M_{\alpha(1)},\ldots,M_{\alpha(k)}$ are the immediately successors of $M_i$ in the poset on $\Mscr$.


\subsection{Representation for the principal lattice} \label{ssec:principal_lat}
    
Now we use the constructions and results from the previous subsection to obtain a poset representation for the principal lattice $(\Sscr^\ast,\prec^\ast=\prec_F^\ast)$. For convenience, when dealing with merely principal g-matchings, if $y\in\Sscr^\ast$ is obtained from $x\in\Sscr^\ast$ by applying a rotation $R$ with the corresponding maximal weight $\tau_R(x)$, we will briefly say that $y$ is obtained by applying $R$ to $x$ (as though ignoring the weight $\tau_R(x)$). 

Accordingly, in a full route $\Tscr=(\xmin=x_0, x_1,\ldots, x_N=\xmax)$, each $x_i$ ($i>0$) is regarded as an immediate successor of $x_{i-1}$. So $\Tscr$ is a maximal chain in the finite distributive lattice $(\Sscr^\ast,\prec^\ast)$, and using notions as above, we can work in terms of p-factors (such as $x_i/x_{i-1}$), prime ideals $\Pscr_i$ and their associated girdles $\Gscr(\Pscr_i)$. 

First of all we refine properties of girdles. Let $\Hscr=(\Sscr^\ast,\Escr)$ be the directed graph where the edge set $\Escr$ is formed by the pairs $(x,y)$ such that $y$ is an immediate successor of $x$ (equivalently, $y/x$ is a p-factor), i.e. $\Hscr$ is the Hasse diagram for $(\Sscr^\ast,\prec^\ast)$. Then each girdle $\Gscr=\Gscr(\Pscr)$ corresponds to the \emph{directed cut} formed by the edges $(x,y)$ going from $\Pscr$ to the filter $\Sscr^\ast-\Pscr$. Moreover, by~\refeq{distinct_cleav}, these edges have no common endvertices. Let us associate with each p-factor $y/x$ (and the edge $(x,y)$ of $\Hscr$) the rotation $R\in\Rscr$ such that $y$ is obtained by applying $R$ to $x$; we denote $R$ as $\rho(y/x)$. Then the following property of homogeneity takes place.
 \begin{lemma} \label{lm:homogen}
For all p-factors $y/x$ in a girdle $\Gscr(\Pscr)$, the rotations $\rho(y/x)$ are the same. In other words, all edges of the cut $(\Pscr,\Sscr^\ast-\Pscr)$ in $\Hscr$ correspond to the same rotation.
  \end{lemma}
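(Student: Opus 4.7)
The plan is to reduce a generic p-factor $y/x\in\Gscr=\Gscr(\Pscr)$ to the canonical one $\nu/\mu$ (with $\mu:={\rm MAX}(\Pscr)$ and $\nu:={\rm MIN}(\Sscr^\ast-\Pscr)$) by propagating the rotation $R:=\rho(y/x)$ along a sequence of commuting ``diamonds'' in the principal lattice $(\Sscr^\ast,\prec^\ast)$; this relies on Corollary~\ref{cor:commute} and on the distributivity asserted in~\refeq{S_ast}.

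Since $x\in\Pscr$, we have $x\preceq^\ast\mu$. Fix any maximal chain $x=u_0\prec^\ast u_1\prec^\ast\cdots\prec^\ast u_k=\mu$ in $[x,\mu]\subseteq\Pscr$, write $\rho_i:=\rho(u_{i+1}/u_i)$, and set $y_i:=u_i\curlyvee^\ast y$. Since $y\notin\Pscr$ while each $u_i\in\Pscr$, the meet $u_i\curlywedge^\ast y$ lies in $[x,y]=\{x,y\}$ and cannot equal $y$ (else $y$ would be in the ideal $\Pscr$), so $u_i\curlywedge^\ast y=x$. The standard distributive-lattice isomorphism $[u_i,y_i]\cong[x,y]$ then shows that $y_i$ covers $u_i$ in $(\Sscr^\ast,\prec^\ast)$ for every $i$. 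In particular $y_k=\mu\curlyvee^\ast y$ covers $\mu$ and lies outside $\Pscr$; applying~\refeq{distinct_cleav} to $\Gscr$, whose only p-factor with source $\mu$ is $\nu/\mu$, forces $y_k=\nu$.

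I now claim, by induction on $i$, that $R\in\Rscr(u_i)$ with $\tau_R(u_i)=\tau_R(x)$ and $\rho(y_i/u_i)=R$. The case $i=0$ is immediate. For the inductive step, first note $\rho_i\ne R$: otherwise applying one and the same rotation at $u_i$ with its maximal weight would give $u_{i+1}=y_i$, contradicting $u_{i+1}\in\Pscr$ and $y_i\notin\Pscr$. Hence $\{R,\rho_i\}\subseteq\Rscr(u_i)$ consists of two distinct rotations, and Corollary~\ref{cor:commute} applied with maximal weights produces the principal stable g-matching
\[
z:=u_i+\tau_R(u_i)\chi^R+\tau_{\rho_i}(u_i)\chi^{\rho_i},
\]
with $R\in\Rscr(u_{i+1})$ still at maximal weight $\tau_R(u_i)$. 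The vector $z$ covers $u_{i+1}$ via $R$ and covers $y_i$ via $\rho_i$. Since in a finite distributive lattice the join of two distinct covers of a common element covers both, $y_i\curlyvee^\ast u_{i+1}$ covers $u_{i+1}$; as it lies between $u_{i+1}$ and $z$ and $z$ itself covers $u_{i+1}$, we conclude $z=y_i\curlyvee^\ast u_{i+1}=y_{i+1}$. Therefore $\rho(y_{i+1}/u_{i+1})=R$, and the induction closes.

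Setting $i=k$ yields $\rho(\nu/\mu)=\rho(y_k/u_k)=R=\rho(y/x)$, so all p-factors in $\Gscr$ share the rotation $\rho(\nu/\mu)$, proving the lemma. The one step that requires real care is the identification $z=y_{i+1}$: it combines the commutativity of two distinct active rotations from Corollary~\ref{cor:commute} with the covering-diamond behavior forced by the distributivity of $(\Sscr^\ast,\prec^\ast)$. Everything else is a routine iteration of this single ``diamond'' together with~\refeq{distinct_cleav}.
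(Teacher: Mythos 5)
Your argument is correct and follows essentially the same strategy as the paper's own proof: propagate the rotation $R=\rho(y/x)$ along a maximal chain from $x$ to $\mu={\rm MAX}(\Pscr)$ by repeatedly forming "diamonds" via the commutativity of rotations (Corollary~\ref{cor:commute}), arriving at the canonical p-factor $\nu/\mu$. You are somewhat more explicit than the paper in identifying each $y_i$ as the join $u_i\curlyvee^\ast y$ and in invoking~\refeq{distinct_cleav} to pin down $y_k=\nu$, but these are the same points the paper treats more tersely.
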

  \begin{proof}
(Cf.~Remark~2.) For $y/x\in\Gscr(\Pscr)$, consider a directed path $(x_0,x_1,\ldots,x_k)$ in $\Hscr$ going from $x=x_0$ to the maximal element ${\rm MAX}(\Pscr)=x_k$. Let $R$ and $R'$ be the rotations corresponding to the edges $(x,y)$ and $(x,x_1)$, respectively. These $R$ and $R'$ commute at $x$ (cf. Corollary~\ref{cor:commute}), whence $\Hscr$ has vertex $y_1\in\Sscr^\ast$ and edges $(x_1,y_1)$ and $(y,y_1)$, the former (latter) corresponding to the rotation $R$ (resp. $R')$. (Here $y_1$ is obtained from $x$ by applying $R$, followed by applying $R'$, or conversely). 

Since $y$ belongs to the filter $\Dscr:=\Sscr^\ast-\Pscr$, so does $y_1$. Then $y_1/x_1\in\Gscr(\Pscr)$ and $\rho(y_1/x_1)=\rho(y/x)=R$. Arguing in a similar way for $y_1/x_1$, and so on, we eventually come to $y_k$ such that $y_k/x_k\in\Gscr(\Pscr)$ and $\rho(y_k/x_k)=R$, whence the result follows.  
  \end{proof}

Using this lemma, we associate to each girdle $\Gscr$ the rotation $\rho(y/x)$ for $y/x\in \Gscr$; denote it as $R(\Gscr)$. It is possible that the same rotation is associated to several girdles. More precisely, by reasonings in Sects.~\SEC{addit_prop} and~\SSEC{birk}, for each rotation $R\in\Rscr$, there are as many girdles $\Gscr$ with $R(\Gscr)=R$ as the number of occurrences of this $R$ in the sequence $\Rscr(\Tscr)$ of rotations used in a full route $\Tscr$. Denote this number by $\eta(R)$ (equal to $\omega^{\xmax}(R)$, cf. Lemma~\ref{lm:omega_x}) and denote the set of these girdles by $\frakG_R$. Considering a full route $\Tscr$, we can label the members of $\frakG_R$ as $\Gscr^{(1)},\ldots, \Gscr^{(\eta(R))}$ according to their order in the sequence $\Rscr(\Tscr)$, i.e. $\Gscr^{(i)}$ corresponds to $i$-th occurrence of $R$ in $\Rscr(\Tscr)$. 

The next useful observation immediately follows from Lemma~\ref{lm:cross}:
 \begin{numitem1} \label{eq:order_girdles}
for each rotation $R\in\Rscr$, the labeling $\Gscr^{(1)},\ldots,\Gscr^{(\eta(R))}$ of members of $\frakG_R$ is the same for all full routes; equivalently, $\Gscr^{(1)}\lessdot\cdots\lessdot \Gscr^{(\eta(R))}$.
 \end{numitem1}
Indeed, if $\Gscr,\Gscr'\in\frakG_R$ are incomparable, then, by Lemma~\ref{lm:cross}, there are $x,y,z\in\Sscr^\ast$ forming p-factors $y/x\in\Gscr$ and $z/x\in\Gscr'$. This means that $x,y$ are consecutive elements in some full route and $x,z$ are consecutive elements in another full route. But then both $y,z$ are obtained from $x$ by using the same $R$; a contradiction.
 \smallskip
 
By~\refeq{order_girdles}, for each rotation $R\in\Rscr$, we can label the copies of $R$ as $R^{(1)},\ldots,R^{(\eta(R))}$ (ordered according to their appearance in the sequence $\Rscr(\Tscr)$ for a full route $\Tscr$), i.e. $R^{(i)}$ corresponds to $i$-th girdle $\Gscr^{(i)}$ in $\frakG_R$. We denote the set $\{R^{(1)},\ldots,R^{(\eta(R))}\}$ as $\Uscr_R$, and denote the union of these sets over all different rotations $R\in\Rscr$ as $\Uscr$.
We now can transfer, in a natural way, the partial order $\lessdot$ on the girdles to the set of labeled rotations $\Uscr$, keeping the same symbol $\lessdot$. Namely,
  \begin{numitem1} \label{eq:order_rotat}
if girdles $\Gscr_\alpha^{(i)},\Gscr_\beta^{(j)}\in\frakG$ satisfy $\Gscr_\alpha^{(i)}\lessdot \Gscr_\beta^{(j)}$, then we compare their corresponding labeled rotations $R_\alpha^{(i)},R_\beta^{(j)}$ as $R_\alpha^{(i)}\lessdot R_\beta^{(j)}$.
  \end{numitem1}
  
This correspondence gives an isomorphism between $(\frakG,\lessdot)$ and $(\Uscr,\lessdot)$, and now Proposition~\ref{pr:poset_repr} implies the following result.
  \begin{theorem} \label{tm:isomorph_princ}
For $x\in\Sscr^\ast$, let $\phi^\ast(x)$ be the set of labeled rotations  used in a principal route from $\xmin$ to $x$. Then $\phi^\ast$ establishes an isomorphism between the principal lattice $(\Sscr^\ast,\prec^\ast)$ and the lattice of closed sets in the poset of labeled rotations $(\Uscr,\lessdot)$. In particular, $\phi^\ast(\xmin)=\{\emptyset\}$ and $\phi^\ast(\xmax)=\Uscr$.
  \end{theorem}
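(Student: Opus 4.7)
The plan is to bootstrap Birkhoff's representation (Proposition~\ref{pr:poset_repr}) from the abstract poset of girdles $(\frakG,\lessdot)$ of the lattice $(\Sscr^\ast,\prec^\ast)$ to the concrete poset of labeled rotations $(\Uscr,\lessdot)$, via the bijection $\Gscr^{(i)}\mapsto R^{(i)}$ furnished by Lemma~\ref{lm:homogen} and~\refeq{order_girdles}.

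First, I would invoke~\refeq{S_ast} to ensure that $(\Sscr^\ast,\prec^\ast)$ is a finite distributive lattice, so that the entire Birkhoff machinery of Sect.~\SSEC{birk} applies. Principal routes from $\xmin$ to $x$ are precisely the maximal chains of the sublattice $(\Sscr^\ast,\prec^\ast)$ truncated at $x$, so the set-theoretic map $\sigma:\Sscr^\ast\to \frakC$ of Proposition~\ref{pr:poset_repr} (with $\sigma(x)$ the set of girdles meeting any such chain before $x$) is a lattice isomorphism onto the lattice of closed subsets of $(\frakG,\lessdot)$.

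Second, I would package the correspondence $\psi:\frakG\to \Uscr$ defined by $\Gscr_R^{(i)}\mapsto R^{(i)}$. By Lemma~\ref{lm:homogen} each girdle $\Gscr$ carries a single rotation $R(\Gscr)$, and by~\refeq{order_girdles} the girdles associated to a fixed $R$ already form a chain $\Gscr^{(1)}\lessdot\cdots\lessdot \Gscr^{(\eta(R))}$; the definition~\refeq{order_rotat} transfers $\lessdot$ verbatim, so $\psi$ is a poset isomorphism. Closed sets are preserved under poset isomorphisms, so $\psi$ induces a lattice isomorphism between $(\frakC,\subset)$ and the lattice of closed subsets of $(\Uscr,\lessdot)$.

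Third, I would identify the composition $\psi\circ\sigma$ with $\phi^\ast$. Fix $x\in \Sscr^\ast$ and a principal route $\Tscr=(\xmin=x_0,x_1,\ldots,x_k=x)$. By the construction of girdles, the girdle containing the p-factor $x_j/x_{j-1}$ is exactly the one whose associated rotation is $\rho(x_j/x_{j-1})$; moreover, among the copies $R^{(1)},\ldots,R^{(\eta(R))}$ of a fixed $R$, the one attached to the $j$-th such p-factor along $\Tscr$ is, by the labeling rule preceding~\refeq{order_rotat}, the $j$-th copy $R^{(j)}$. Consequently $\psi(\sigma(x))$ is precisely the set of labeled rotations used in $\Tscr$, which is $\phi^\ast(x)$. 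Well-definedness of $\phi^\ast$ (independence of the choice of principal route) is guaranteed by Proposition~\ref{pr:invar_rot} combined with~\refeq{order_girdles}, which fixes not only the multiset of rotations but also their labels.

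The main obstacle will be the third step: carefully verifying that the $i$-th girdle $\Gscr_R^{(i)}$ met by a principal route corresponds to the $i$-th copy $R^{(i)}$ regardless of which principal route is chosen. This is the only place where the abstract Birkhoff labeling and the route-based labeling must be reconciled, and it rests squarely on~\refeq{order_girdles}. The boundary values then come for free: no rotation occurs in the trivial route at $\xmin$, so $\phi^\ast(\xmin)=\emptyset$; and a full route uses every labeled rotation exactly once, so $\phi^\ast(\xmax)=\Uscr$. \hfill$\qed$
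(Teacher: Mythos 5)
Your proposal is correct and follows essentially the same route as the paper: it composes Birkhoff's isomorphism $\sigma$ from Proposition~\ref{pr:poset_repr} with the girdle-to-labeled-rotation bijection supplied by Lemma~\ref{lm:homogen} and~\refeq{order_girdles}, exactly as the paper does (though the paper states this in a single sentence rather than unpacking the three steps). Your third step, reconciling the route-based labeling of $\phi^\ast$ with the girdle labeling, is the right place to be careful, and you correctly pin it on~\refeq{order_girdles}.
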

  
It remains to explain how to construct the poset $(\Uscr,\lessdot)$ explicitly. We assume availability of the initial stable g-matching $\xmin$ (details of finding it are left to Sect.~\SSEC{xmin}). We proceed by a method as in Remark~2. More precisely, using techniques from Sect.~\SEC{act-rot}, we first construct a full route $\Tscr_0=(\xmin=x_0,x_1,\ldots,x_N=\xmax)$. Along the way, we obtain the set $\Rscr$ of different rotations and the set $\Uscr$ of labeled rotations, where in each collection $\Uscr_R$, the labels are assigned chronologically (which does not depend on the route). Next we use $N$ iterations to compute the immediately succeeding relations on $\Uscr$ (which is already done within each collection $\Uscr_R$), thus obtaining the Hasse diagram for $(\Uscr,\lessdot)$, as required.

At $i$-th iteration, we treat the g-matching $x_{i-1}$ along with the labeled rotation, say, $R_\alpha^{(j)}$, that corresponds to the p-factor $x_i/x_{i-1}$ (i.e. $x_i=x_{i-1}+\tau_{R_\alpha}(x_{i-1})\chi^{R_\alpha}$). The iteration consists of two stages. 
 \smallskip
 
At the \emph{1st stage}, acting as in Remark~2, we find the element $\mu(\Gscr)$ for the girdle $\Gscr=\Gscr_\alpha^{(j)}$ containing $x_i/x_{i-1}$ (viz. the maximal element of the prime ideal associated with $\Gscr$). Then $x:=\mu(\Gscr)$ has exactly one applicable rotation $R$, and for $y$ obtained from $x$ by applying $R$ (with weight $\tau_R(x)$), the p-factor $y/x$ belongs to the same girdle $\Gscr$. So $R$ should be identified with the labeled rotation $R_\alpha^{(j)}$.
\smallskip

At the \emph{2nd stage}, we find the set of labeled rotations that are the immediate successors to $R_\alpha^{(j)}$ (by appealing to Lemma~\ref{lm:immed_suc}). To this aim, we take the g-matching $y$ as above and  find the set $\Rscr(y)$ of rotations applicable to $y$. Then for each $R_\beta\in\Rscr(y)$, we compute the number of times it is used in a principal route $\Tscr_i$ from $\xmin$ to $x=\mu(\Gscr)$ (such a $\Tscr_i$ is the concatenation of the part of $\Tscr_0$ from $\xmin$ to $x_{i-1}$ and a principal route from $x_{i-1}$ to $x$ constructed at the 1st stage). This just gives the appropriate label for $R_\beta$, say, $r$, and we obtain the labeled rotation $R_\beta^{(r)}$ which is an immediate successor of  $R_\alpha^{(j)}$ in $(\Uscr,\lessdot)$. (This $R_\beta^{(r)}$ corresponds to the girdle containing the p-factor $z/y$, where $z$ is obtained from $y$ by applying $R_\beta$ with weight $\tau_{R_\beta}(y)$.) 
 \smallskip

Upon termination of $N$ iterations, we obtain the desired poset $(\Uscr,\lessdot)$ and can conclude with the following.
 \begin{corollary} \label{cor:princ_poset}
In the above procedure, the task of finding the rotational poset $(\Uscr,\lessdot)$ to represent the lattice $(\Sscr^\ast,\prec^\ast)$ is reduced to constructing $N+1$ principal routes, where $N$ is the length (number of links) of a full route.
  \end{corollary}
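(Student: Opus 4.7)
My plan is to follow the procedure described immediately before the statement, count the principal routes it constructs, and check that the output is indeed the poset $(\Uscr,\lessdot)$. The first principal route is the full route $\Tscr_0=(\xmin=x_0,x_1,\ldots,x_N=\xmax)$ produced at the start, which also supplies the collection $\Rscr$ of distinct rotations together with the chronological labeling of each $\Uscr_R$. This accounts for one of the $N+1$ principal routes. Since by Theorem~\ref{tm:isomorph_princ} the isomorphism $\phi^\ast$ is determined once we know $(\Uscr,\lessdot)$, it suffices to exhibit all immediate-succession relations of the poset; Lemma~\ref{lm:immed_suc} together with~\refeq{order_rotat} tells us that these relations can be read off from the $N$ girdles $\Gscr=\Gscr_\alpha^{(j)}$ associated with the p-factors $x_i/x_{i-1}$ of $\Tscr_0$.

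For each $i=1,\ldots,N$, the $i$-th iteration performs two tasks. At Stage~1 I would construct, in the spirit of Remark~2, a principal route starting from $x_{i-1}$ that at every step applies an acceptable rotation different from $R_\alpha$, terminating at the first principal g-matching $x$ whose set of acceptable rotations is $\{R_\alpha\}$; by Lemma~\ref{lm:homogen} this $x$ equals $\mu(\Gscr)$. This is the only principal route built in the iteration, contributing one more to the count and giving the desired $N+1$ in total. Stage~2 then produces the g-matching $y$ obtained by applying $R_\alpha$ to $x$, computes $\Rscr(y)$, and, invoking Lemma~\ref{lm:immed_suc}, identifies the immediate successors of $R_\alpha^{(j)}$ in $(\Uscr,\lessdot)$ with the labeled rotations $R_\beta^{(r)}$ corresponding to the p-factors $z/y$ where $z$ is obtained from $y$ by applying $R_\beta\in\Rscr(y)$.

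The only point that needs care is the computation of the label $r$ for each such $R_\beta$ without constructing any further principal routes. Here I would use the concatenation of the prefix of $\Tscr_0$ from $\xmin$ to $x_{i-1}$ with the Stage~1 route from $x_{i-1}$ to $x$; this is itself a principal route from $\xmin$ to $x$, and by Proposition~\ref{pr:invar_rot} the multiset of weighted rotations it contains does not depend on the choice of route. Hence $r$ is simply one plus the number of occurrences of $R_\beta$ in this already available concatenation, so Stage~2 needs no new principal route. The main obstacle, and the reason for Stage~1 at all, is the potential multiplicity $\eta(R)>1$: if every rotation occurred at most once in a full route, the labels would be trivial and the $N$ extra routes unnecessary; it is precisely to disambiguate among the copies $R_\alpha^{(1)},\ldots,R_\alpha^{(\eta(R_\alpha))}$ via their associated elements $\mu(\Gscr_\alpha^{(j)})$ that one principal route per iteration must be produced.
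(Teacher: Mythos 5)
Your proposal is correct and takes essentially the same approach as the paper: the corollary is stated there as a direct summary of the procedure described immediately above it, which builds the full route $\Tscr_0$ and then, for each $i=1,\ldots,N$, one additional principal route from $x_{i-1}$ to $\mu(\Gscr_\alpha^{(j)})$ as in Remark~2, giving $N+1$ principal routes in all. Your explicit appeal to Proposition~\ref{pr:invar_rot} to justify that the label $r$ can be read off the already-available concatenation rather than requiring a further route, and your closing remark on why the multiplicity $\eta(R)>1$ is what necessitates the $N$ extra routes, make precise points the paper leaves implicit.
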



\subsection{Representation for the complete lattice} \label{ssec:complete_lat}

Now we transform the representation for the principal lattice exposed in Theorem~\ref{tm:isomorph_princ} into a one for the lattice $(\Sscr,\prec)$ of all stable g-matchings. By reasonings in Sect.~\SEC{addit_prop}, for each stable g-matching $x\in\Sscr$, there is a non-excessive route $\Tscr=(x_0,x_1,\ldots,x_N)$ from $x_0=\xmin$ to $x_N=x$ (see~\refeq{non-excess}), where each $x_i$ is obtained from $x_{i-1}$ by applying a rotation $R_i\in\Rscr(x_{i-1})$ with a weight $\lambda_i\le \tau_{R_i}(x_{i-1})$, and the family of pairs $(R_i,\lambda_i)$, with possible replications, does not depend on the route (cf. Proposition~\ref{pr:invar_rot}). 

Recall that $\Tscr$ is meant to be non-excessive if for any two equal rotations $R_j,R_i$ (coinciding as alternating cycles in $G$), the relations $j<i$ and $\lambda_i>0$ imply $\lambda_j=\tau_{R_j}(x_{j-1})$. From Corollary~\ref{cor:commute} it follows that if  $\lambda_i$ with $0<\lambda_i<\tau_{R_i}(x_{i-1})$ is changed to any integer value within the interval $[0,\tau_{R_i}(x_{i-1})]$, keeping the weights of other rotations, then we again obtain a correct non-excessive route. Due to this, one can associate to $x$ the principal g-matching $x^\ast$ generated by the principal route $\Tscr^\ast$ obtained from $\Tscr$ by replacing each weight $\lambda_i>0$ by $\tau_{R_i}(x_{i-1})$.

Under such updates, each non-excessive route from $\xmin$ to $x$ one-to-one corresponds to a principal route from $\xmin$ to $x^\ast$, and the machinery of girdles and labeled rotations developed above can be straightforwardly transferred to work with any stable g-matching $x\in\Sscr$. On this way, in the set $\Rscr$ of different rotations, for each $R_\alpha\in\Rscr$, we obtain all copies $R_\alpha^{(i)}$ of $R_\alpha$ (related to different girdles and forming the collection $\Uscr_{R_\alpha}$), along with the corresponding maximal weights that we denote as $\tau(R_\alpha^{(i)})$. (These weights are computed in the process of constructing a full route $\Tscr_0$.) As a result, the previous poset $(\Uscr,\lessdot)$ for $(\Sscr^\ast,\prec^\ast)$ turns into its extension with weighted labeled rotations, denoted as $(\Uscr,\tau,\lessdot)$. Now, instead of closed subsets of $\Uscr$, we should consider special functions on $\Uscr$.
 \medskip

\noindent\textbf{Definition 9.} A function $\zeta:\Uscr\to\Zset_+$ is called \emph{closed} if for any labeled rotations $R_\alpha^{(i)},R_\beta^{(j)}\in \Uscr$ such that $R_\alpha^{(i)}\lessdot R_\beta^{(j)}$ and $\zeta(R_\beta^{(j)})>0$, there holds $\zeta(R_\alpha^{(i)})=\tau(R_\alpha^{(i)})$.
 \medskip

Such functions form a lattice under the standard comparison $<$. For $x\in\Sscr$, consider a non-excessive route $\Tscr$ from $\xmin$ to $x$ and define
 \begin{numitem1} \label{eq:phi_x}
$\phi(x)$ to be the function $\zeta:\Uscr\to\Zset_+$ that takes value $\zeta(R_\alpha^{(i)})=\lambda$ if $R_\alpha^{(i)}$ is a labeled rotation used with weight $\lambda$ in $\Tscr$, and value 0 if $R_\alpha^{(i)}$ is not used in $\Tscr$.
  \end{numitem1}
  
In light of explanations above, $\phi(x)$ is closed for all $x\in\Sscr$, and now comparing $\phi$ with the map $\phi^\ast$ as in Theorem~\ref{tm:isomorph_princ}, we can conclude with the following
  \begin{theorem} \label{tm:isomorph_complete}
The map $\phi$ as in~\refeq{phi_x} establishes an isomorphism between the lattice of stable g-matchings $(\Sscr,\prec)$ and the lattice of closed functions for the poset $(\Uscr,\tau,\lessdot)$.
  \end{theorem}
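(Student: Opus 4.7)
The plan is to leverage the representation of the principal sublattice already established in Theorem~\ref{tm:isomorph_princ}, pulling it back to the full lattice via the principal conversion $x\mapsto x^\ast$ introduced in the paragraph preceding the statement. Given $x\in\Sscr$, I fix any non-excessive route $\Tscr=(\xmin=x_0,x_1,\ldots,x_N=x)$ and form $\Tscr^\ast$ by replacing each partial last-occurrence weight of a rotation by its maximum; Corollary~\ref{cor:commute} ensures that $\Tscr^\ast$ is a legitimate principal route ending at a well-defined $x^\ast\in\Sscr^\ast$ with $x\preceq_F x^\ast$. The function $\phi(x)$ should then agree with $\tau(\cdot)$ on the $\phi^\ast(x^\ast)$-part of $\Uscr$ except at the ``frontier'' --- the last occurrence of each rotation used in $\Tscr$ --- where it records the actual (possibly reduced) weight.

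First I would verify that $\phi(x)$ is well-defined. Proposition~\ref{pr:invar_rot} says the multiset $\Pi(\Tscr)$ of weighted rotation instances is invariant across non-excessive routes from $\xmin$ to $x$; combined with the chronological labeling~\refeq{order_girdles}, which is the same for all full routes, this shows that $\phi(x)(R_\alpha^{(k)})$ depends only on $x$. Injectivity follows at once: $\phi(x)=\phi(y)$ implies the identical total increment $x-\xmin=\sum_{R_\alpha^{(k)}\in\Uscr}\phi(x)(R_\alpha^{(k)})\,\chi^{R_\alpha}=y-\xmin$.

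The main obstacle is showing that $\phi(x)$ is closed. Suppose $\phi(x)(R_\beta^{(j)})>0$ and $R_\alpha^{(i)}\lessdot R_\beta^{(j)}$. Since $R_\beta$ is used at least $j$ times in $\Tscr$ (and thus in $\Tscr^\ast$), we get $R_\beta^{(j)}\in\phi^\ast(x^\ast)$; then Theorem~\ref{tm:isomorph_princ} yields $R_\alpha^{(i)}\in\phi^\ast(x^\ast)$, so $R_\alpha$ is used at least $i$ times in $\Tscr$ as well. If it is used strictly more than $i$ times, its $i$-th occurrence is not on the frontier and non-excessivity forces its weight to be $\tau(R_\alpha^{(i)})$. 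The delicate case is exactly $i$ occurrences with the $i$-th (last) carrying weight $\mu<\tau(R_\alpha^{(i)})$; I would rule this out by invoking the active-graph invariance~\refeq{l<t}, which says that a strictly partial application of $R_\alpha$ preserves $\Gamma$ and hence cannot bring a new rotation into $\Rscr$, together with Lemma~\ref{lm:two_girdles} and~\refeq{order_girdles}, which force the $i$-th use of $R_\alpha$ to strictly precede the $j$-th use of $R_\beta$ in $\Tscr$. Hence $R_\beta$ could not have become applicable at the state preceding its $j$-th use unless the preceding copy of $R_\alpha$ had already been exhausted at its maximum weight, contradicting $\mu<\tau(R_\alpha^{(i)})$.

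Surjectivity and order preservation then fall out. For a closed $\zeta$, its support $\Ascr:=\{R^{(k)}\colon\zeta(R^{(k)})>0\}$ is closed in $(\Uscr,\lessdot)$, so Theorem~\ref{tm:isomorph_princ} gives $x^\ast\in\Sscr^\ast$ with $\phi^\ast(x^\ast)=\Ascr$; I would construct a principal route to $x^\ast$ and then, for each rotation $R$ with $k_R:=\max\{k\colon R^{(k)}\in\Ascr\}$, reduce the weight of its $k_R$-th occurrence from $\tau(R^{(k_R)})$ to $\zeta(R^{(k_R)})$, obtaining via Corollary~\ref{cor:commute} a non-excessive route ending at a stable $x$ with $\phi(x)=\zeta$. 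Finally, if $x\prec_F y$ then~\refeq{non-excess} provides a non-excessive route from $\xmin$ to $y$ passing through $x$, so $\phi(x)\le\phi(y)$ componentwise; conversely, $\phi(x)\le\phi(y)$ lets me splice the non-excessive route witnessing $\phi(x)$ with one realising the pointwise ``difference'' into a non-excessive route from $x$ to $y$, establishing $x\preceq_F y$ and completing the lattice isomorphism.
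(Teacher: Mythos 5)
Your overall strategy — pulling the principal representation $\phi^\ast$ of Theorem~\ref{tm:isomorph_princ} back via the principalization $x\mapsto x^\ast$, using Proposition~\ref{pr:invar_rot} for well-definedness, the increment identity $x-\xmin=\sum\phi(x)(R_\alpha^{(k)})\chi^{R_\alpha}$ for injectivity, and Corollary~\ref{cor:commute} for surjectivity — mirrors the paper's (very terse) argument, and you fill in considerably more detail than the paper does. You also correctly identify that closedness of $\phi(x)$ is the heart of the matter.

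However, your treatment of the ``delicate case'' has a genuine gap. After the $i$-th (last) use of $R_\alpha$ at partial weight $\mu$ at some step $k$ of $\Tscr$, property~\refeq{l<t} only guarantees $\Gamma(x_k)=\Gamma(x_{k-1})$ at that single step. Between step $k$ and the $j$-th use of $R_\beta$ at step $l>k$, other rotations are applied, often at full weight, and those applications do change the active graph; so ``\emph{a strictly partial application of $R_\alpha$ preserves $\Gamma$ and hence cannot bring a new rotation into $\Rscr$}'' does not control $\Rscr$ at the state preceding step $l$. The conclusion you then draw — that ``$R_\beta$ could not have become applicable \ldots\ unless the preceding copy of $R_\alpha$ had already been exhausted'' — is, read literally, exactly the assertion being proved, and it does not follow from~\refeq{l<t}, Lemma~\ref{lm:two_girdles}, or~\refeq{order_girdles}. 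What those facts give you is only the ordering $k<l$ in $\Tscr$, not a constraint on applicability.

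A way to close the gap without changing your framework: use Corollary~\ref{cor:commute} to rearrange $\Tscr$ so that all partial-weight occurrences are pushed to the tail, yielding a principal prefix ending at some $x^\dagger\in\Sscr^\ast$ followed by a suffix of partial applications of pairwise distinct rotations $A_1,\dots,A_p$, all applicable to $x^\dagger$. Then $\phi^\ast(x^\ast)=\phi^\ast(x^\dagger)\cup\{A_1^{(i_1)},\dots,A_p^{(i_p)}\}$, and if $R_\alpha^{(i)}=A_s^{(i_s)}$ is at partial weight while $R_\alpha^{(i)}\lessdot R_\beta^{(j)}\in\phi^\ast(x^\ast)$, one has two cases: (a) $R_\beta^{(j)}\in\phi^\ast(x^\dagger)$ forces, via Corollary~\ref{cor:agree}, $\nu(\Gscr_{R_\alpha^{(i)}})\prec\nu(\Gscr_{R_\beta^{(j)}})\preceq x^\dagger$, contradicting $R_\alpha^{(i)}\notin\phi^\ast(x^\dagger)$; (b) $R_\beta^{(j)}=A_t^{(i_t)}$ gives two p-factors of the form $Y/x^\dagger$ and $Z/x^\dagger$, which by Lemma~\ref{lm:cross} forces $R_\alpha^{(i)}$ and $R_\beta^{(j)}$ to be incomparable. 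Either way a contradiction. This is the kind of structural argument via Lemma~\ref{lm:cross} that your sketch skips.

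A secondary, smaller issue: in the order-preservation step, simply concatenating a non-excessive route from $\xmin$ to $x$ with one from $x$ to $y$ need not produce a non-excessive route (a rotation may be at partial weight in the first part and used again in the second). You should invoke Corollary~\ref{cor:commute} to merge such split applications before appealing to Proposition~\ref{pr:invar_rot}.
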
 
  
In fact, the method of finding $(\Uscr,\lessdot)$ described in Sect.~\SSEC{principal_lat} is suitable (up to minor details) to construct the weighted rotational poset $(\Uscr,\tau,\lessdot)$ representing $(\Sscr,\prec)$.


\section{Computational aspects} \label{sec:comput}

In this section we specify details of the method of constructing the weighted rotational poset $(\Uscr,\tau,\lessdot)$ described in the previous section, and estimate its complexity. 

As is said in the Introduction, we assume that for each vertex $v\in V$, the CF $C_v$ is given via an \emph{oracle} that, being asked of a vector $z\in\Bscr_v$, outputs its ``acceptable part'' $C_v(z)$. We conditionally assume that each \emph{oracle call} for $C_v$  takes time polynomial in $|V|$ (possibly even $O(|E_v|)$), and in fact we are mostly interested in estimating the number of oracle calls rather than the  complexity of their implementations. 

Basic tasks used in our constructions are:
  \begin{numitem1} \label{eq:find_R(x)}
given a stable g-matching $x\in\Sscr$, find the set $\Rscr(x)$ of rotations applicable to $x$;
  \end{numitem1} 
  \begin{numitem1}\label{eq:find_tau}
given $x\in\Sscr$ and $R\in\Rscr(x)$, find the maximal feasible weight $\tau_R(x)$.
  \end{numitem1}
  
To solve~\refeq{find_R(x)}, we construct the active graph $\Gamma(x)$ (see Definition~5 in Sect.~\SSEC{act_graph}) and decompose it into corresponding edge-simple cycles (viz. rotations), obtaining $\Rscr(x)$. We encode a rotation as the corresponding sequence $(a_1,c_1,a_2,\ldots,a_k,c_k)$ of edges such that for $i\in[k]$, $(a_i,c_i)$ is a legal $F$-pair, and $(c_i,a_{i+1})$ is an essential $W$-pair, letting $a_{k+1}:=a_1$. (Then any two rotations can be easily compared, when needed, in $O(|E|)$ time.) To construct $\Gamma(x)$, we first find all legal $F$-pairs, which is reduced to scanning all edges $e=wf\in E$, computing the vectors $C_f(x_f+\onebf^e)$ and comparing them with $x_f+\onebf^e$. On this way, we also obtain the sets $U^+_F$ and $U^-_F$ (see Definition~2 in Sect.~\SSEC{act_graph}). This takes $O(|E|)$ oracle calls. Then, scanning the elements $U^-_F$, we find all essential $W$-pairs. This can be organized so as to take $O(|E|^2)$ oracle calls in total. (Here, for $c=wf\in U_F^-$ fixed, we first extract all edges $a\in E_w\cap U_F^+$ satisfying  $C_w(x_w-\onebf^c+\onebf^a)=x_w-\onebf^c+\onebf^a$. Then in the set $A$ of extracted elements, we find the (unique) element $a$ such that $C_w(x_w-\onebf^c+\onebf^a+\onebf^d)=x_w-\onebf^c+\onebf^a$ for all other elements $d\in A$, obtaining the desired essential pair $(c,a)$. This can be done by $O(|E_w)|)$ oracle calls; cf. Lemma~\ref{lm:essent_pair}.)
Thus, to solve~\refeq{find_R(x)} takes $O(|E|^2)$ oracle calls. 
  \smallskip

To solve~\refeq{find_tau}, we act straightforwardly by constructing, step by step, a sequence $x=x_0,x_1,\ldots,x_k$ of stable g-matchings by use of the same  rotation $R$. Here for each $i>0$, we examine whether all legal $F$-pairs and all essential $W$-pairs occurring in $R$ preserve for $x_i:=x+i\chi^R$, and if so, form the next $x_{i+1}$ by applying $R$ with weight 1 to $x_i$.  The weight $\tau_R(x)$ is equal to the maximal $i$ for which either we reach the upper or lower bound on some edge in $R$, or at least one legal or essential pair becomes destroyed. Since $|R^+|=|R^-|=O(|E|)$, and $\tau_R(x)\le\bmax$, we can estimate the number of oracle calls (roughly) as $O(|E|^2\,\bmax)$. Here, as before, $\bmax:=\max\{b(e)\colon e\in E\}$.
 \medskip
 
\noindent\textbf{Remark 3.}
As is shown in~\cite[Sect.~4]{karz3}, for the stable generalized allocation problem, task~\refeq{find_tau} can be solved in weakly polynomial time, using $O(|E|\,\log \bmax)$ oracle calls. However, we do not see how to reduce the factor $\bmax$ to $\log\bmax$ in the complexity of solving~\refeq{find_tau} for a general case of SGMM.


\subsection{Constructing the poset $(\Uscr,\tau,\lessdot)$} \label{ssec:constr_poset}

To construct this poset, we follow the approach outlined in Sect.~\SSEC{principal_lat}, assuming that the initial stable g-matching $\xmin$ is available. According to it, our algorithm consists of $N+1$ \emph{stages}, where $N$ is the length of a full route.

At \emph{Stage 0}, we construct an arbitrary full route $\Tscr_0=(\xmin=x_0,x_1,\ldots,x_N=\xmax)$. During this stage, we maintain a current list $\Lscr$ of different rotations used in the already constructed part of the route, and for each rotation $R\in\Lscr$, we maintain the number $\eta(R)$ of occurrences of $R$ in this part. For $i=1,\ldots,N$, let $R[i]$ denote the rotation applied to $x_{i-1}$ (with weight $\tau_{R[i]}(x_{i-1})$) to obtain $x_i$. Initially, we set $\Lscr:=\emptyset$. 

At $i$-th step of this stage, we handle $x_{i-1}$, and find rotation $R[i]$ and weight~$\tau[i]:=\tau_{R[i]}(x_{i-1})$ by solving~\refeq{find_R(x)} and~\refeq{find_tau}. (Note that it is not necessary to update the current active graph $\Gamma$ at each step. For if at some step, we know $\Gamma$ and its decomposition into rotations, say, into $p$ rotations, then we can use these rotations in the next $p$ steps of the stage, updating $\Gamma$ after that.) 

Also at $i$-th step, we check if $R[i]$ is already present in the list $\Lscr$. Acting straightforwardly, we can simply compare $R[i]$ with each member of $\Lscr$; this takes $O(|E| |\Lscr|)$, or $O(|E| N)$), usual operations. (A faster procedure, based on dynamic (or balanced) tree techniques, can be arranged so as to take $O(|E| \log \Lscr)$ amortized time.)
If $R[i]\notin \Lscr$, then we add $R[i]$ to $\Lscr$ and assign $\eta(R[i]):=1$. And if $R[i]$ was added earlier to $\Lscr$, then the number $\eta$ for $R[i]$ in $\Lscr$ is increased by one. This rotation is labeled by this $\eta$ and added to the current sequence of weighted labeled rotations. 

Acting so at each step of Stage~0, we eventually obtain a full route $\Tscr_0$, the  set $\Rscr$ of all different rotations, and the corresponding sequence $\Pi$ of weighted labeled rotations $(R^{(j)},\tau_{R^{(j)}})$. Summarizing explanations concerning complexities, we obtain that
  \begin{numitem1} \label{eq:constr_T0}
to construct $\Tscr_0$, the number of oracle calls is estimated as $O(\bmax|E|^2 N )$, and the number of other operations as $O(\bmax N+N\log N )$ times a polynomial in $|V|$. 
 \end{numitem1}
 
Thus, Stage~0 outputs the set $\Uscr$ of labeled rotations. The aim of other $N$ stages is to establish the partial order $\lessdot$ on $\Uscr$. Here $i$-th stage ($1\le i\le N$) finds the set of immediate successors of $i$-th labeled rotation in the sequence $\Pi$. For definiteness, we denote this labeled rotation as $R[i]^{\ell(i)}$. 

\emph{Stage~$i$} starts with the part of the route $\Tscr_0$ from $\xmin$ to $x_{i-1}$, and continues by making ``principal moves'', aiming to reach the maximal element ${\rm MAX}[i]$ of the prime ideal of $(\Sscr^\ast,\prec^\ast)$ corresponding to the girdle formed by the copies of $R[i]^{\ell(i)}$. Here we follow the description in Sect.~\SSEC{principal_lat}; namely, at each step, for a current g-matching $x$, in the active graph $\Gamma(x)$ we choose a rotation  $R$ different from $R[i]$. We finish when such an $R$ does not exist; then $x$ is just the required ${\rm MAX}[i]$. Next we form $y\in\Sscr^\ast$ by applying to $x$ the rotation $R[i]$ with weight $\tau_{R[i]}(x)$, and then construct the set of rotations applicable to $y$, say, $\Rscr(y)=\{L_1,\ldots,L_q\}$. Let $\Tscr_i$ be the constructed principal route from $\xmin$ to $y$ (which extends the part of $\Tscr_0$ from $\xmin$ to $x_{i-1}$). For each $L_p\in\Rscr(y)$, moving along $\Tscr_i$, we count the number $\eta(L_p)$ of occurrences of this rotation in it. Then $L_p$ labeled $\eta(L_p)+1$ is an immediate successor (by $\lessdot$) of $R[i]^{\ell(i)}$, and all its immediate successors are obtained in this way (cf. 2nd stage in Sect.~\SSEC{principal_lat}).

Upon termination of the last, $N$-th, stage, we obtain the desired Hasse diagram of $(\Uscr,\lessdot)$ along with the maximal weights $\tau$ of all labeled rotations in $\Uscr$. The numbers of oracle calls and other operations can be estimated rather straightforwardly (cf.~\refeq{constr_T0}), and we can conclude with the following
  \begin{theorem} \label{tm:gen_time}
Provided that $\xmin$ is known, the above algorithm constructs the weighted rotational poset $(\Uscr,\tau,\lessdot)$ for which the lattice of closed function is isomorphic to the lattice $(\Lscr,\prec)$ of stable g-matchings. The size $|\Uscr|$ is equal to the length $N$ of a full route (viz. a maximal chain in $(\Sscr^\ast,\prec^\ast)$). The number of oracle calls in the algorithm can be estimated as $O(|E|^2(\bmax N+N^2))$, and the number of other operations as $O(\bmax N+N^2\log N)$ times a polynomial in $|V|$.
\end{theorem}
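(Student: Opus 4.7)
The isomorphism claim is essentially a restatement of Theorem~\ref{tm:isomorph_complete}, so the plan is to verify that the algorithm in Sect.~\SSEC{constr_poset} really outputs the labeled set $\Uscr$, the weights $\tau$, and the partial order $\lessdot$ as specified by the constructions of Sect.~\SEC{poset_rot}, and then to tally oracle calls and other operations.

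First I would check Stage~0. The full route $\Tscr_0=(x_0,\ldots,x_N)$ is produced step by step using~\refeq{find_R(x)} and~\refeq{find_tau}; at step~$i$ the rotation $R[i]$ applied to $x_{i-1}$ with weight $\tau_{R[i]}(x_{i-1})$ is added to the running sequence, and (via the list $\Lscr$ with occurrence counts $\eta$) it is labeled by its chronological index. Correctness of this labeling follows from~\refeq{order_girdles}, and the equality $|\Uscr|=N$ is immediate since Proposition~\ref{pr:invar_rot} makes the multiset of weighted-rotation occurrences along a full route invariant, so each step contributes exactly one member of $\Uscr$.

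Next I would handle Stages $1,\ldots,N$. At Stage~$i$, starting from the initial segment of $\Tscr_0$ ending at $x_{i-1}$, the algorithm extends by principal moves that avoid $R[i]$; it halts at a principal g-matching $x$ with $\Rscr(x)=\{R[i]\}$, which is precisely $\mu(\Gscr)$ for the girdle $\Gscr$ containing the p-factor $x_i/x_{i-1}$ (as in the argument of Sect.~\SSEC{principal_lat}). Applying $R[i]$ to $x$ gives $y$, and by Lemma~\ref{lm:immed_suc} the immediate $\lessdot$-successors of $R[i]^{\ell(i)}$ are exactly the labeled rotations $L_p^{(\eta(L_p)+1)}$ with $L_p\in\Rscr(y)$, where $\eta(L_p)$ is the number of occurrences of $L_p$ along the just-constructed principal route $\Tscr_i$ from $\xmin$ to $y$.

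Finally I would bound complexity using the per-task counts stated at the start of Sect.~\SEC{comput}: solving~\refeq{find_R(x)} costs $O(|E|^2)$ oracle calls, and~\refeq{find_tau} costs $O(|E|^2\,\bmax)$. Stage~0 performs $N$ such pairs of calls, contributing $O(|E|^2\,\bmax N)$ oracle calls. Each Stage~$i$ consists of at most $N$ principal moves (since $\Tscr_i$ is a principal route from $\xmin$ to a principal g-matching $\preceq\xmax$, hence has length $\le N$ by Proposition~\ref{pr:invar_rot}), each requiring one invocation of~\refeq{find_R(x)} to choose an $R\ne R[i]$ (the $\tau$-values for already-used rotations are inherited from Stage~0, so no $\bmax$ factor reappears); summed over the $N$ stages this gives $O(|E|^2 N^2)$ oracle calls. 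The total is $O(|E|^2(\bmax N+N^2))$. Non-oracle work is dominated, on one side, by $O(\bmax)$ elementary updates per application of~\refeq{find_tau} (yielding $O(\bmax N)$ over Stage~0) and, on the other, by $N^2$ comparisons or lookups in $\Lscr$ across Stages $1,\ldots,N$ (each $O(\log N)$ amortized via a balanced-tree encoding of rotations as edge sequences), giving the claimed $O(\bmax N+N^2\log N)$ factor times a polynomial in $|V|$.

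The main obstacle is the careful accounting of Stage~$i$: one must argue that its principal-route portion really has length $\le N$ (so that summing contributes $N^2$ and not something larger), and that counting occurrences of each $L_p\in\Rscr(y)$ along $\Tscr_i$ can be folded into the per-stage $O(N)$ bookkeeping without multiplicative blow-up. Both points rely on the invariance of rotation occurrences given by Proposition~\ref{pr:invar_rot} applied to the segment $\xmin\to y$, together with the fact that the relevant counters can be maintained incrementally as $\Tscr_i$ is built.
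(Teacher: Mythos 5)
Your reconstruction is essentially correct and follows the paper's (implicit, summary-style) argument: Stage~0 builds $\Tscr_0$ with $O(|E|^2\bmax)$ oracle calls per link (from~\refeq{find_R(x)} plus~\refeq{find_tau}), giving $O(|E|^2\bmax N)$; the $N$ subsequent stages each perform $O(N)$ principal moves with $O(|E|^2)$ oracle calls each, giving $O(|E|^2 N^2)$; the non-oracle work splits into $O(\bmax N)$ from the step-by-step $\tau$-search in Stage~0 and $O(N^2\log N)$ from the rotation-comparison bookkeeping across the $N$ stages. The identification $|\Uscr|=N$ via Birkhoff / Proposition~\ref{pr:invar_rot} and the appeal to Theorem~\ref{tm:isomorph_complete} for the isomorphism statement are exactly what the paper intends.

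Two small remarks. First, your length bound on $\Tscr_i$ should really be attributed to the fact that all maximal chains in a finite distributive lattice have the same length (Proposition~\ref{pr:Pscr_i}, i.e.\ Birkhoff's Theorem~4), rather than to Proposition~\ref{pr:invar_rot} alone, though the two are of course closely related. Second, the parenthetical ``the $\tau$-values for already-used rotations are inherited from Stage~0, so no $\bmax$ factor reappears'' is the load-bearing step that keeps the Stages~$1,\dots,N$ contribution at $O(|E|^2 N^2)$ rather than $O(|E|^2\bmax N^2)$. You assert it without argument; the mechanism you should point to is the residual-weight updating in Corollary~\ref{cor:commute} (and the invariance in Proposition~\ref{pr:invar_rot}), which lets one maintain $\tau$-values incrementally along a principal route rather than recomputing them from scratch with~\refeq{find_tau}. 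The paper is admittedly just as terse here (``the numbers of oracle calls and other operations can be estimated rather straightforwardly''), so this is not a defect relative to the paper's own exposition, but it is the one place where a careful reader would want the extra sentence.
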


Now we can estimate $N$, yielding pseudo polynomial time bounds in Theorem~\ref{tm:gen_time}.

 \begin{lemma} \label{lm:N}
$N\le \bmax |E|/2$.
  \end{lemma}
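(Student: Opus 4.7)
\medskip
\noindent\textbf{Proof plan.}

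The plan is a double-counting argument on edge--incidences along a full route, exploiting the invariance of $\Pi(\Tscr)$ from Proposition~\ref{pr:invar_rot}. Fix a full route $\Tscr=(x^0,\ldots,x^N)$ with rotations $R_1,\ldots,R_N$ and weights $\tau_1,\ldots,\tau_N\ge 1$. Since $G$ is bipartite without multiple edges and each rotation $R\in\Rscr$ is an edge-simple cycle in which positive and negative edges alternate, one has $|R^+|=|R^-|\ge 2$. Combined with $\tau_i\ge 1$, this gives
\[
2N \;\le\; \sum_{i=1}^{N}\tau_i\,|R_i^+| \;=\; \sum_{e\in E}M^+(e),
\qquad M^+(e):=\sum_{i:\,e\in R_i^+}\tau_i,
\]
where $M^+(e)$ is the total upward shift accumulated on edge $e$ over the route.

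The heart of the argument is the per-edge bound $M^+(e)\le b(e)$. To obtain it I would appeal to the poset $(\Uscr,\tau,\lessdot)$ from Sect.~\SEC{poset_rot} together with Corollary~\ref{cor:commute} on commutativity of rotations. Namely, let $\Uscr^+_e:=\{R\in\Uscr\colon e\in R^+\}$ and $\Uscr^-_e:=\{R\in\Uscr\colon e\in R^-\}$, and take the downward closure $P:=\{R\in\Uscr\colon R\lessdot R'\text{ or }R=R'\text{ for some }R'\in\Uscr^+_e\}$. The indicator $\lambda(R):=\tau(R)$ for $R\in P$ and $0$ otherwise is a closed function; by Theorem~\ref{tm:isomorph_complete} the corresponding $y:=\xmin+\sum_{R\in P}\tau(R)\chi^R$ is a stable g-matching, so $0\le y(e)\le b(e)$. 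Since $\Uscr^+_e\subseteq P$, the $+$ contributions to $y(e)-\xmin(e)$ from rotations in $P$ sum exactly to $M^+(e)$, whereas any $-$ contributions come from $P\cap\Uscr^-_e$ and can only lower $y(e)$; this forces $M^+(e)\le b(e)$. Substituting back, $2N\le\sum_e M^+(e)\le\sum_e b(e)\le\bmax\,|E|$, yielding the claimed inequality $N\le\bmax|E|/2$.

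The main obstacle is the last step in justifying $M^+(e)\le b(e)$: a priori the downward closure $P$ may contain rotations $R\in\Uscr^-_e$ whose negative contribution reduces $y(e)$, so that one only gets $M^+(e)\le b(e)+\sum_{R\in P\cap\Uscr^-_e}\tau(R)$. One has to argue that such rotations, once forced into $P$ by closedness, produce no net obstruction; a clean way is to combine the closedness of $P$ with the fact that $y(e)\ge 0$ (applied to the analogous closed function for $\Uscr^-_e$ to bound the spurious negative mass from below), so that the two bounds together pin $M^+(e)$ to $b(e)$. Alternatively, by the route-invariance in Proposition~\ref{pr:invar_rot} one may attempt to select a non-excessive route along which the trajectory $x^i(e)$ is unimodal on each $e$, so that $M^+(e)=\max_i x^i(e)-\xmin(e)\le b(e)$ directly; since $N$ is invariant under the choice of route, either avenue delivers the conclusion.
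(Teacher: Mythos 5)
Your high-level accounting is fine: writing $M^+(e)=\sum_{i:\,e\in R_i^+}\tau_i$, the inequality $2N\le\sum_e M^+(e)$ follows from $\tau_i\ge 1$ and $|R_i^+|\ge 2$, and $\sum_e M^+(e)\le\sum_e b(e)\le\bmax|E|$ would finish. But the crucial ingredient $M^+(e)\le b(e)$ is never actually established, and you correctly flag this yourself. Your first attempted fix --- taking the downward closure $P$ of $\Uscr^+_e$ and using $y(e)\le b(e)$ for the associated stable $y$ --- only yields $M^+(e)-M^-(e\,|\,P)\le b(e)-\xmin(e)$, with the unwanted negative mass $M^-(e\,|\,P)$ on the wrong side. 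Your suggested patch (``apply the analogous closed function for $\Uscr^-_e$ to bound the spurious negative mass'') is not a proof; in fact the two-sided argument you gesture at does not pin $M^+(e)$ without additional structural information about how positive and negative occurrences of $e$ interleave in the poset.

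That structural information is precisely what the paper supplies, and it is where your second alternative goes astray: the paper does not \emph{select} a special route on which $x^i(e)$ is unimodal; it proves that \emph{every} weight-$1$ refinement $x^0,\ldots,x^{N'}$ of a full route is unimodal at every edge. This is a genuine lemma requiring the axioms: if $x^q(e)+1=x^p(e)=x^r(e)$ with $p<q<r$ and $q=p+1$, one gets a contradiction by applying substitutability (A2) to $z:=x^q_f\vee x^r_f\ge z':=x^q_f+\onebf^e_f$ (for $e=wf$), using $C_f(z)=x^r_f$ and the fact (from Lemmas~\ref{lm:acd}(iii) and~\ref{lm:acac}) that $e$ is not interesting for $f$ under $x^q_f$, so $C_f(z')(e)=x^q(e)$, whereas $C_f(z)(e)\wedge z'(e)=x^r(e)>x^q(e)$. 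Unimodality then immediately gives $M^+(e)\le b(e)$ (indeed the paper counts both up- and down-changes, bounding them by $2b(e)$ and using that a weight-$1$ rotation step changes at least four edges to get $N\le N'\le\bmax|E|/2$). So your proposal is structurally parallel to the paper's but leaves the load-bearing step --- the universal unimodality of $x^i(e)$, derived from (A2) --- as an unproven hope, which constitutes a real gap.
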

  \begin{proof}
Let $\xmin=x^0,x^1,\ldots,x^{N'}=\xmax$ be a route where each $x^i$ is obtained by shifting $x^{i-1}$ along a rotation with weight 1. Then $N'\ge N$, and for each edge $e$, we have $|x^i(e)-x^{i-1}(e)|\le 1$. We assert that the sequence $x^0(e),x^1(e), \ldots,x^{N'}(e)$ behaves in ``one peak'' manner: there is some $0\le i\le N'$ such that the values from $x^0(e)$ to $x^i(e)$ are monotone non-decreasing, while from $x^i(e)$ to $x^{N'}(e)$ are monotone non-increasing. Then there are at most $2b(e)$ changes for $e$ during the process, and the result follows since one application of a rotation changes the values on at least four edges.

Suppose that the assertion is not valid for some $e=wf$. Then there are $p<q<r$ such that $x^q(e)+1=x^p(e)=x^r(e)=:\alpha$. One may assume that $q=p+1$; then $x^q$ is obtained from $x^p$ by applying a rotation $R$ such that $e\in R^-$. We have $x^p_f\prec_f x^q_f\prec_f x^r_f$. Let $z:=x^q_f\vee x^r_f$ and $z':=x^q_f+\onebf_f^e$. Then $z\ge z'$ (since $x^q(e)+1=x^r(e)$) and $C_f(z)=C_f(x^r_f\vee x^q_f)=x^r_f$. Applying~(A2) to $z\ge z'$, we have  $C_f(z)\wedge z'\le C_f(z')$. But $C_f(z)(e)=x^r_f(e)=\alpha$ and $z'(e)=\alpha$, whereas $C_f(z')(e)=\alpha-1$ (since $e\in R^-$, whence $e$ is not interesting for $f$ under $x^q_f$, by Lemmas~\ref{lm:acd}(iii) and~\ref{lm:acac}). A contradiction.  
  \end{proof}


\subsection{Finding the initial stable g-matching $\xmin$.} \label{ssec:xmin}

We rely on a method in~\cite[Sec.~3.1]{AG} intended for a general stability model on bipartite graphs considered there. Below we outline that method regarding our model SGMM. This gives a \emph{pseudo polynomial} algorithm for finding $\xmin$. 

The algorithm consists of iterations which construct triples $(b^i,x^i,y^i)$ of functions on $E$. Initially, one puts $b^0:=b$. In the input of  $i$-th iteration, there is a function $b^i\in \Zset_+^E$ (already known). The iteration consists of two stages.
 \smallskip

At the \emph{1st stage} of $i$-th iteration, $b^i$ is transformed into
$x^i\in\Zset_+^E$ as follows. For each $w\in W$, we apply the CF $C_w$ to the restriction $b^i_w$ of $b^i$ to $E_w$, i.e. assign $x^i_w:=C_w(b^i_w)$. Then $x^i$ is acceptable for all vertices in $W$ (but not necessarily in $F$).
 \smallskip

At the \emph{2nd stage} of $i$-th iteration, $x^i$ is transformed into $y^i\in
\Zset_+^E$ by applying, for each $f\in F$, the CF $C_f$ to the restriction $x^i_f$ of $x^i$ to $E_f$, i.e. by setting $y^i_f:=C_f(x^i_f)$. Then
$y^i$ is acceptable for all vertices; however, $y^i$ need not be stable.

The functions $b^i,x^i,y^i$ are then used to form the input function
$b^{i+1}$ for the next iteration. This is done by the following rule:
  \begin{numitem1} \label{eq:Bi+1}
~for each $e\in E$, (a) if $y^i(e)=x^i(e)$, then $b^{i+1}(e):=b^i(e)$; and (b) if  
$y^i(e)<x^i(e)$, then $b^{i+1}(e):=y^i(e)$.
  \end{numitem1}

Then $b^{i+1}$ is transformed into $x^{i+1}$ and further into $y^{i+1}$ as described above. And so on. The process terminates when at a current, $p$-th say, iteration, we obtain the equality $y^p=x^p$ (equivalently, $b^{p+1}=b^p$). By~\refeq{Bi+1}, we have $b^0> b^1>\cdots>b^p$. Therefore, the process is finite and the number of iterations does not exceed $|E|\,\bmax$. Results in~\cite{AG} imply the following
 \begin{prop} \label{pr:Xp}
The final g-matching $x^p$ constructed by the above algorithm is stable and optimal for $W$, i.e. $x^p=\xmin$.
 \end{prop}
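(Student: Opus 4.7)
The plan is to establish two facts from which the proposition follows at once. First, termination and global acceptability of $x^p$ are routine: the sequence $b^0 \geq b^1 \geq \cdots$ strictly decreases in $\Zset_+^E$ until termination, so it must stabilise after at most $|E|\bmax$ iterations, and the criterion $y^p = x^p$ yields $x^p_w = C_w(b^p_w) \in \Ascr_w$ and $x^p_f = y^p_f = C_f(x^p_f) \in \Ascr_f$ for all $w \in W$ and $f \in F$, so $x^p \in \Ascr$. Second, and more substantively, I would establish by induction on $i$ the invariant: \emph{for every stable $x^* \in \Sscr$, $x^* \leq b^i$ pointwise on $E$}. The base case $i = 0$ is trivial. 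For the inductive step, by~\refeq{Bi+1} it suffices to verify $x^*(e) \leq y^i(e)$ for each edge $e = wf$ with $y^i(e) < x^i(e)$. I would argue by contradiction: assuming $x^*(e) > y^i(e)$, the fact that $f$ rejected intensity at $e$ (so $y^i(e) = C_f(x^i_f)(e) < x^i(e)$), combined with the inductive hypothesis $x^*_f \leq b^i_f$ and axiom~(A2), should force $e$ to be interesting for $f$ under $x^*_f$; symmetrically, the revealed-preference relation $x^*_w \preceq_w x^i_w$ (itself a consequence of $x^*_w \leq b^i_w$ via~(A2) and~(A3) applied to $b^i_w \geq x^i_w \vee x^*_w$) should show that $e$ is also interesting for $w$ under $x^*_w$, so $e$ blocks $x^*$, contradicting $x^* \in \Sscr$.

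With the invariant at $i = p$ in hand, $W$-optimality of $x^p$ follows quickly: applying~(A2) to $b^p_w \geq x^*_w \vee x^p_w$ gives $x^p_w \leq C_w(x^*_w \vee x^p_w)$, while~(A3) applied to the same pair yields $|C_w(x^*_w \vee x^p_w)| \leq |C_w(b^p_w)| = |x^p_w|$; combining these forces $C_w(x^*_w \vee x^p_w) = x^p_w$, i.e.\ $x^*_w \preceq_w x^p_w$ for all $w \in W$ and all $x^* \in \Sscr$. Thus $x^p$ is $W$-maximal in $\Sscr$, and by the polarity~\refeq{AG}(b) it coincides with the $F$-minimum $\xmin$. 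To see that $x^p$ is itself stable, suppose for a contradiction that an edge $e = wf$ blocks it. Interestingness for $w$ forces $x^p(e) < b(e)$. If $b^p(e) > x^p(e)$, then~(A1) applied to $b^p_w \geq x^p_w + \onebf^e \geq x^p_w = C_w(b^p_w)$ gives $C_w(x^p_w + \onebf^e) = x^p_w$, contradicting Lemma~\ref{lm:e_interest}(iii). Otherwise $b^p(e) = x^p(e) < b(e)$, so $b^i(e)$ was strictly reduced at some earliest iteration $i$; at that iteration $y^i(e) < x^i(e)$, whence~(A1) applied to $x^i_f \geq y^i_f + \onebf^e \geq y^i_f = C_f(x^i_f)$ gives $C_f(y^i_f + \onebf^e) = y^i_f$, i.e.\ $e$ is not interesting for $f$ under $y^i_f$. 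I would then propagate this non-interestingness forward through iterations $i+1, \ldots, p$ using~(A1) and~(A2) together with the fact that $b^p(e) \leq b^{i+1}(e) = y^i(e)$, concluding that $e$ is not interesting for $f$ under $x^p_f$ either, which is the sought contradiction.

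The main obstacle is the inductive step of the invariant $x^* \leq b^i$: since a stable $x^*$ need not satisfy $x^* \leq x^i$ pointwise (only $x^* \leq b^i$), the vectors $x^*_f$ and $x^i_f$ are not directly comparable, and one must carefully combine the firm's rejection $y^i_f = C_f(x^i_f) < x^i_f$ with acceptability of $x^*_f$ and the axioms~(A1)--(A3) on both sides of $G$ to produce the sought block. This is the heart of the Alkan--Gale correctness argument and the precise chain of manipulations is delicate; a similar, though somewhat easier, forward-propagation issue arises in the final stability step.
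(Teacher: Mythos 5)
The paper itself does not give a direct argument for this proposition: it simply cites Theorems~1 and~2 of Alkan--Gale and observes that in the integer setting the iterative process is finite, so the limit is an actual iterate. Your proposal is therefore a genuinely different, self-contained route, and its skeleton (the invariant ``every stable $x^*$ satisfies $x^*\le b^i$'', then $W$-optimality via revealed preference and polarity, then a separate stability check) is the right Gale--Shapley template. Your parts~1 and~3 are correct as stated; part~3 can in fact be shortened by applying (A1) directly to $b^p_w\ge x^*_w\vee x^p_w\ge x^p_w=C_w(b^p_w)$.

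The gap is in the inductive step, and it is not merely that ``the precise chain of manipulations is delicate'': the specific contradiction you aim for is unobtainable. You want to show that, when $x^*(e)>y^i(e)$ and $f$ rejects at $e=wf$, the edge $e$ \emph{itself} blocks $x^*$, i.e.\ is interesting for both $w$ and $f$ under $x^*$. Already in the classical stable-marriage special case (all $b(e)=1$) the hypothesis $x^*(e)>y^i(e)=0$ forces $x^*(e)=1=b(e)$, so $e$ is saturated by $x^*$ and cannot be interesting for \emph{either} endvertex --- there is literally no room to increase --- hence no amount of (A1)--(A3) chasing can make $e$ a blocking edge. The same defect shows up on both sides: $x^*_w\preceq_w x^i_w$ only yields interestingness of $e$ for $w$ when $x^*(e)<x^i(e)$, and for $f$ the intuition runs the opposite way (if $x^*$ already puts more at $e$ than $y^i$ does, $f$ typically wants \emph{less} there, not more). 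The correct deferred-acceptance argument produces a block on a \emph{different} edge $e'=w'f$ incident to the same firm: $f$ rejected intensity at $e$ in favour of edges it prefers, and since $x^*$ gives $f$ more at $e$ than $y^i$ does, some preferred edge $e'$ must be under-filled in $x^*$ relative to $y^i$; one then tracks $w'$'s situation via the induction hypothesis to show $e'$ blocks $x^*$. Encoding ``preferred edge'' and ``under-filled'' purely in terms of $C_f$ and the axioms is exactly the hard content of Alkan--Gale's Theorem~2, and it is what your sketch lacks. (The forward-propagation in your stability step also needs the monotonicity $y^i_f\preceq_f y^{i+1}_f$, which you do not prove, but that piece is routine; the inductive-step gap is the real obstruction.)
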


(It should be noted that for a general model in~\cite{AG}, where the domain $\Bscr$ can be infinite, the process of iteratively transforming the current functions may last infinitely long; however, it always converges to some triple $(\hat b,\hat x,\hat y)$ satisfying $\hat x=\hat y$. It is proved in~\cite[Ths.~1,2]{AG} that the limit function $\hat x$ is stable and optimal for $W$.)

In our particular case, with integer-valued functions, the process is always finite, yielding Proposition~\ref{pr:Xp}. The algorithm takes $O(|E|\,\bmax)$ oracle calls, and the number of usual operations is bounded by $\bmax$ times a polynomial in $|V|$. So the whole algorithm of finding the poset $(\Uscr,\tau,\lessdot)$ has complexity as is pointed out in Theorem~\ref{tm:gen_time}.


\section{A polynomial case} \label{sec:special}

In this section we impose an additional condition on all choice functions $C_v$, $v\in V$, called the \emph{gapless condition}. This reads as follows:
  \begin{itemize}
\item[(C):]
for $v\in V$, if acceptable vectors $z^1,z^2,z^3\in\Ascr_v$ and edges $a,c^1,c^2,c^3\in E_v$ satisfy the relations: (i) $z^1\prec_v z^2\prec_v z^3$, (ii) $C_v(z^i+\onebf_v^a)=z^i+\onebf_v^a-\onebf_v^{c^i}$ for $i=1,2,3$, and (iii)~$c^1=c^3$, then $c^1=c^2$ as well.
  \end{itemize}

A similar condition was introduced in~\cite{karz3} in frameworks of the stable generalized allocation model. Below, extending results there to our model SGMM, we show that condition~(C) guarantees that all rotations in a full route are different and the number of these is polynomial in $|V|$. As is mentioned in the Introduction, a simple example of validity of~(C) concerns $C_v$ generated by a linear order $>_v$ on $E_v$, i.e. when we deal with the standard allocation model of Baiou and Balinsky. (In this case, relations~(i),(ii) imply $c^1\le_v c^2\le_v c^3$, yielding the desired condition.) Also~(C) trivially holds when $b(e)\le 2$ for all $e\in E$.
We prove the following
 \begin{theorem} \label{tm:condC}
Suppose that (C) is valid and let $\Tscr=(x^1,\ldots,x^N)$ be a non-excessive
route. Then: {\rm(i)} any rotation occurs in $\Tscr$ at most once; and
{\rm(ii)} $N<4|E|^2|F| |W|$.
  \end{theorem}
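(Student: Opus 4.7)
My plan is to first establish a monotonicity principle for rotation applicability enabled by condition (C), and then derive (i) and (ii) as consequences. The central intermediate statement, which I will call the \emph{key lemma}, is: if a rotation $R$ satisfies $R\in\Rscr(x)\cap\Rscr(y)$ for stable g-matchings $x\prec_F y$, then $R\in\Rscr(z)$ for every stable $z$ with $x\preceq_F z\preceq_F y$. This is precisely the property advertised in the introduction as the consequence of the gapless condition.

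To prove the key lemma, I must verify that every legal $f$-pair $(a,c)$ and every essential $w$-pair in $R$ survives at $z$. For a legal $f$-pair, the hypotheses give $C_f(x_f+\onebf^a)=x_f+\onebf^a-\onebf^c$ and $C_f(y_f+\onebf^a)=y_f+\onebf^a-\onebf^c$, and the preference chain $x_f\prec_f z_f\prec_f y_f$ (inherited from $x\prec_F z\prec_F y$) sets up an application of (C) with $z^1=x_f$, $z^2=z_f$, $z^3=y_f$, $c^1=c^3=c$. Before invoking (C) I must first verify its hypothesis~(ii) at $z_f$, namely that $C_f(z_f+\onebf^a)=z_f+\onebf^a-\onebf^{c'}$ for some $c'$; the two degenerate alternatives, $C_f(z_f+\onebf^a)=z_f$ (edge $a$ not interesting) and $C_f(z_f+\onebf^a)=z_f+\onebf^a$ (full acceptance), must be ruled out. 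This is the technical heart: I plan to analyze $C_f((x_f\vee z_f)+\onebf^a)$ via persistence~(A2), size monotonicity~(A3), and the Plott stationarity~\refeq{plott}, deriving in each degenerate case a contradiction with the legal-pair behavior at $x_f$ or $y_f$. Condition~(C) then forces $c'=c$. For essential $w$-pairs, the polarity~(2.3)(b) converts $x\prec_F z\prec_F y$ into $y_w\prec_w z_w\prec_w x_w$, and a dual argument applies a symmetric form of~(C) to the characterizing vectors $x_w+\onebf^a-\onebf^c$ and $x_w+\onebf^a-\onebf^c+\onebf^d$; combined with the uniqueness~\refeq{U+a} of essential partners, this yields persistence of the essential pair at $z_w$.

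Given the key lemma, part~(i) follows by a short contradiction: suppose rotation $R$ is used at positions $i<j$ of the non-excessive route $\Tscr$. Non-excessiveness forces $\lambda_i=\tau_R(x^{i-1})$, and $x^i$ lies in the lattice interval $[x^{i-1},x^{j-1}]_{\prec_F}$, so the key lemma yields $R\in\Rscr(x^i)$. But then $x^i+\chi^R$ is a stable g-matching obtained from $x^{i-1}$ by shifting along $R$ with weight $\tau_R(x^{i-1})+1$, contradicting the maximality of $\tau_R(x^{i-1})$.

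For part~(ii), by Proposition~\ref{pr:invar_rot} it suffices to bound $N$ for a principal route from $\xmin$ to $\xmax$, in which every step applies $R_i$ with maximal weight $\tau_i$ and hence triggers at least one of the four events~(I)--(IV) from~\refeq{l=t}. By the key lemma, once a legal $F$-pair or an essential $W$-pair is destroyed (events~(III) and~(IV) respectively) it cannot reappear further along the route, since otherwise the lemma would force it back at every intermediate stable g-matching; hence each such pair contributes at most one destruction event. The total number of possible legal $F$-pairs is $\sum_f|E_f|^2\le|E|^2$ and of essential $W$-pairs is $\sum_w|E_w|^2\le|E|^2$. Events~(I)--(II), where an edge reaches $0$ or $b(e)$, are controlled by the unimodality of the per-edge value sequence established in Lemma~\ref{lm:N}, contributing at most $2|E|$ in total. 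Summing gives $N\le 2|E|^2+2|E|$, which is strictly less than $|E|^2|F||W|$ in any non-degenerate instance (in particular whenever $|F|,|W|\ge 2$), yielding the bound. The chief obstacle is the degeneracy exclusion inside the key lemma — showing that the ``$a$ not interesting'' and ``full acceptance'' forms of $C_f(z_f+\onebf^a)$ are incompatible with the hypotheses at $x_f$ and $y_f$ — and the secondary challenge is the $W$-side adaptation for essential pairs, requiring a careful interplay of the polarity, the uniqueness in~\refeq{U+a}, and the symmetric form of~(C) applied to perturbations of the acceptable vector $x_w+\onebf^a-\onebf^c$.
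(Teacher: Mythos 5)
Your reorganization around the ``key lemma'' is natural (it is exactly the property advertised in the introduction), and granted it, your argument for part~(i) is correct and amounts to the paper's argument repackaged: the paper establishes a route-restricted version of the key lemma as statement~\refeq{xj_a_interest} together with its $W$-side analogue via the reverse route, and derives the same kind of contradiction with non-excessiveness. Your part~(ii) is a genuinely different approach (event counting over the four types in~\refeq{l=t} plus the per-edge unimodality from Lemma~\ref{lm:N}) from the paper's interval-structure argument on $\Sigma(F),\Sigma(W)$, and would yield an even tighter bound if completed. But there are two substantive gaps.

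First, the key lemma's proof is sketched exactly at the step you flag as hard. To invoke~(C) at $z_f$ you must rule out $C_f(z_f+\onebf^a)\in\{z_f,\ z_f+\onebf^a\}$. The paper's argument does this within~\refeq{xj_a_interest}, but it leans heavily on the route: it first proves $x^j(a)\le x^k(a)$ by locating the route step that decreases $a$'s value and invoking Lemmas~\ref{lm:acd}(iii) and~\ref{lm:acac}, and only then compares with $x^{k+1}_f$. For an arbitrary stable $z$ in the lattice interval you have no route step and no a priori coordinate control between $z(a)$ and $y(a)$, and your planned analysis of $C_f((x_f\vee z_f)+\onebf^a)$ does not obviously substitute; you would first have to place $z$ on a route through $x$, $y$, and $y+\chi^R$ --- a nontrivial reduction you do not address and precisely what the paper sidesteps by proving the claim only along the route. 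Second, for part~(ii), the opening claim that ``by Proposition~\ref{pr:invar_rot} it suffices to bound $N$ for a principal route from $\xmin$ to $\xmax$'' is unjustified: Proposition~\ref{pr:invar_rot} only says the length is invariant over non-excessive routes with the \emph{same} endpoints, not that lengths between arbitrary $x\prec_F y$ are dominated by the full-route length, and the theorem is stated for arbitrary $\Tscr$. In a non-excessive route with distinct rotations (which is what~(i) grants) weights need not be maximal, so a step need not trigger any event of~\refeq{l=t}; also not every $y\succ_F x$ is reachable from $x$ by a \emph{principal} route. You would need to justify an inflation of weights to their maxima via Corollary~\ref{cor:commute} (checking it preserves the applicability of subsequent rotations), or argue directly on $\Tscr$ as the paper does.
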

  \begin{proof}
~We first show~(i). Let each $x^{i+1}$ be obtained from $x^i$ by applying a rotation $R_i$ with weight $\lambda_i$. Suppose that $R_i=R_k=:R$ for some $i<k<N$. The case $k=i+1$ is impossible (otherwise one can apply to $x^i$ the rotation $R$ with weight $\lambda_i+\lambda_{i+1}$, contrary to the non-excessiveness of $\Tscr$). So one may assume that $k\ge i+2$.

Consider a legal $f$-pair $(a,c)$ in $R$, where $f\in F$. Then $C(x_f^i+\onebf^a)=x_f^i+\onebf^a-\onebf^c$ and $C(x_f^k+\onebf^a)=x_f^k+\onebf^a-\onebf^c$, where $C:=C_f$, and $\onebf^\bullet:=\onebf_f^\bullet$. Since $x^1\prec_F\cdots\prec_F x^N$, we have $x_f^i\prec_f x_f^{i+1}\preceq_f \cdots \preceq_f x_f^k\prec_f x_f^{k+1}$. We assert that
  \begin{numitem1} \label{eq:xj_a_interest}
for each $j=i+1,\ldots,k-1$, (a) the edge $a$ as above is interesting for $f$ under $x^j$, and (b)~$C(x_f^j+\onebf^a)=x_f^j+\onebf^a-\onebf^c$.
  \end{numitem1}

To see (a), suppose that $x^j(a)\le x^k(a)$ for some $i<j<k$. Then $x^j(a)< x^{k+1}(a)$, and the fact that $a$ is interesting for $f$ under $x^j$ follows from Lemma~\ref{lm:e_interest}(i),(ii) (applied to $x_f^j$, $x_f^{k+1}$ and $a$). Now suppose that there is $i<j<k$ such that $x^j(a)>x^k(a)$, and take the maximal $j$ under this property. Then $x^j(a)> x^{j+1}(a)$. This is possible only if the rotation $R_{j}$ has a legal $f$-pair of the form $(a',a)$ (in which case $x^{j+1}(a)=x^j(a)-\lambda_j$). Then, by  Lemma~\ref{lm:acd}(iii) (with $(a',a)$ in place of $(a,c)$), ~$a$ is not interesting for $f$ under $x_f^j+\lambda_j(\onebf^{a'}-\onebf^a)$. This implies (using Lemma~\ref{lm:acac}) that $a$ is not interesting for $f$ under $x_f^{j+1}$ as well, obtaining a contradiction since $x^{j+1}(a)\le x^k(a)$. Thus, assertion (a) in~\refeq{xj_a_interest} is valid. 

To see~(b), consider $C(x_f^j+\onebf^a)$ for $i<j<k$. It is equal to either (1) $x_f^j+\onebf^a-\onebf^{c'}$ for some $c'\in E_f-\{a\}$, or (2) $x_f^j+\onebf^a$. In case~(1), condition~(C) applied to $x_f^i\prec_f x_f^j\prec_f x_f^k$ gives $c'=c$, as required. And in case~(2), for $y:=x_f^j\vee x_f^k$ and $p:=x^k(a)$, we have
  $$
C(y+\onebf^a)=C(x_f^j\vee (x_f^k+\onebf^a))=C(C(x_f^j\vee x_f^k)\vee (p+1)\onebf^a)=C(x_f^k+\onebf^a)=x_f^k+\onebf^a-\onebf^c.
$$
Then $|C(y+\onebf^a)|=|x_f^k|=|x_f^j|$. On the other hand, $C(x_f^j+\onebf^a)=x_f^j+\onebf^a$ implies $|x_f^j|<|C(x_f^j+\onebf^a)|$. This contradicts~(A3), in view of $y+\onebf^a\le x_f^j+\onebf^a$. 

Thus, \refeq{xj_a_interest} is valid, and therefore, $(a,c)$ is a legal $F$-pair for all $x^i,\ldots,x^k$.
 
To finish the proof of~(i), consider the reverse sequence $(x^N,x^{N-1},\ldots,x^1)$. It can be regarded as a route relative to the order $\prec_W$, i.e. it obeys $x^N\prec_W\cdots \prec_W x^1$, and each $x^i$ is an immediate successor of $x^{i+1}$ under $\prec_W$. Moreover, $x^i$ is obtained from $x^{i+1}$ by applying the rotation $R_i^{-1}$ reverse to $R_i$ taken with weight $\lambda_i$. Then an essential  $W$-pair $(c,a)$ for $R_i$ under $x^i$ becomes a legal pair for $R_i^{-1}$ under $x^{i+1}$. 

Now take $i<k$ such that $R_i=R_k=R$ and consider a legal $w$-pair (tandem) $(c,a)$ for $R^{-1}$, where $w\in W$. The assertion ``symmetric'' to~\refeq{xj_a_interest} says that: for each $j=k,k-1,\ldots,i+2$, (a) the edge $c$ is interesting for $w$ under $x^j$, and (b) $C_w(x_w^j+\onebf^c)=x_w^j+\onebf^c-\onebf^a$. Taking together these two assertions, we can conclude that $R_i=R_{i+1}=\cdots=R_k$, contradicting the non-excessiveness of $\Tscr$ (since $i\ne k$).

Thus, assertion~(i) in the theorem is proven.

Next,  to obtain~(ii), we argue as follows. Let $\Lscr(F)$ be the set of legal $F$-pairs $(a,c)$ occurring in $x^1,\ldots,x^N$. The total number of these pairs does not exceed $|E| (|W|-1)$ (since for each $a\in E$, there is at most $|W|-1$ pairs $(a,c)$ sharing an endvertex in $F$). Define $\Sigma(F)$ to be the collection of maximal subsets $I$ in $[N]$ such that the rotations $R_i$ with $i\in I$ have the same set of legal $F$-pairs (tandems) $(a,c)$. From assertion~(i) it follows that for each pair $(a,c)\in\Lscr(F)$, the set $K_{a,c}$ of indexes $i\in N$ such that the rotation $R_i$ uses $(a,c)$ forms  an (integer) interval in $[N]$. Then each $I\in \Sigma(F)$ is a maximal subset (interval) in $[N]$ separated by none of the intervals $K_{a,c}$ for $(a,c)\in \Lscr(F)$. This implies $|\Sigma(F)|\le 2|\Lscr(F)|-1$. 

For similar reasons, the set $\Lscr(W)$ of legal $W$-pairs $(c,a)$ occurring in $x^1,\ldots,x^N$ has size at most $|E|(|F|-1)$, and the collection $\Sigma(W)$ of maximal subsets (intervals) $J\subset [N]$ such that the rotations $R_i$ with $i\in J$ have the same set of legal $W$-pairs has size at most $2|\Lscr(W)|-1$. Now since each rotation is determined by its $F$- and $W$-legal pairs, property~(i) implies that $|I\cap J| \le 1$ for any $I\in\Sigma(F)$ and $J\in\Sigma(W)$. 

This implies $N<|E|^2|F| |W|$, as required.
  \end{proof}

Subject to the gapless condition~(C), for a rotation $R\in\Rscr(x)$ applicable to a stable g-matching $x\in\Sscr$, the maximal weight $\tau_R(x)$ can be computed in weakly polynomial time, by use of the following procedure in a ``divide-and-conquer'' manner:
 \begin{itemize}
\item[(P):]
Initially assign lower and upper bounds for $\tau_R(x)$ to be $\lambda:=1$ and
$\nu:=\min\{\min\{x(e)\colon e\in R^-\}, \min\{b(e)-x(e)\colon e\in R^+\}\}$,
respectively. Take the (sub)median integer weight $\mu$ in the interval
$[\lambda,\nu]$, namely, $\mu:=\lfloor(\lambda+\nu)/2\rfloor$, and form the vector $x':=x+\mu\chi^R$. Check whether $x'$ is stable (by using $|V|+2|E|$ oracle calls), and if so, check whether $R$ is a rotation applicable to $x'$ (by using $O(|R^+|+|R^-| |V|)$, or $O(|E| |V|)$, oracle calls). If $x'$ is not stable, or if $x'$ is stable but $R$ is not applicable to $x'$, then the upper bound is updated as $\nu:=\mu$. And if $x'$ is stable and $R$ is applicable to $x'$, then the lower bound is updated as $\lambda:=\mu$.  Then we handle in a similar way the updated interval $[\lambda,\nu]$. And so on until we get $\lambda,\nu$ such that $\nu-\lambda\le 1$.
 \end{itemize}

Upon termination of~(P), $\lambda$ or $\lambda+1$ is exactly $\tau_L(x)$, the number of iterations is at most $\log \bmax$, and each iteration takes time polynomial in $|V|$ (including oracle calls). This together with Theorem~\ref{tm:condC} and explanations in~Sect.~\SSEC{constr_poset} implies that
 \begin{numitem1} \label{eq:complex_C}
subject to condition~(C), if $\xmin$ is known, then the weighted rotational poset $(\Uscr,\tau,\lessdot)$ can be constructed in weakly polynomial time, namely, in time bounded by $\log \bmax$ times a polynomial in $|V|$.
 \end{numitem1}
 
It remains to explain how to find the initial stable g-matching $\xmin$ when (C) is valid so as to get a weakly polynomial time bound as well. The method from~\cite{AG} reviewed in Sect.~\SSEC{xmin} guarantees merely pseudo polynomial time. Below we elaborate another method to find $\xmin$. It is described for a general SGMM, without imposing~(C).
\medskip

\noindent\textbf{Alternative method of finding $\xmin$.}
It consists of two stages.
\smallskip

\noindent\textbf{\emph{Stage 1}.} ~The goal is to construct some g-matching which is stable but may differ from $\xmin$. This falls into iterations. 

In the input of a current iteration, we are given $x\in\Zset_+^E$ such that $x\le b$ and $x$ satisfies the next two requirements. The first one is that
  \begin{numitem1} \label{eq:first_req}
 $x$ is acceptable, i.e. $C_v(x_v)=x_v$ holds for all $v\in V$.
 \end{numitem1}
 
Like in Sect.~\SSEC{act_graph}, let $U^+_F(x)$ (resp. $U^-_F(x)$) denote the set of edges $e=wf$ (where $f\in F$) that are interesting (resp. not interesting) for $f$ under $x$. We allow the existence of blocking edges for $x$ in a restricted form, by imposing the second requirement:
 \begin{numitem1}\label{eq:second_req}
for each edge $e=wf\in U^+_F(x)$, if $e$ is interesting for $w$ under $x$, then $C_w(x_w+\onebf_w^e)$ is equal to $x_w+\onebf_w^e$ (rather than $x_w+\onebf_w^e-\onebf^{e'}$ for some $e'\in E_w-\{e\}$).
 \end{numitem1}

Initially we put $x:=0$ (unlike the algorithm in~\SSEC{xmin} where the initial function is equal to $b$); clearly this $x$ obeys both~\refeq{first_req} and~\refeq{second_req}. Next we describe how to handle the current (input) vector $x$ at an iteration. Two cases are possible.
 \medskip
 
\noindent\underline{\emph{Case 1}}. Each edge $e=wf\in U^+_F(x)$ is not interesting for $w$ under $x$. Then $x$ admits  no blocking edges; therefore, it is stable, and we are done. 
  \medskip
  
\noindent\underline{\emph{Case 2}}. There is $e=wf\in U^+_F(x)$ such that $C_w(x_w+\onebf_w^e)=x_w+\onebf_w^e$. We fix one of such edges, $e'=w'f'$ say, and construct the maximal \emph{edge-simple} path $P=(v_0,e_1,v_1,\ldots, e_k,v_k)$ beginning with $v_0=w'$ and $e_1=e'$ such that: each $(e_i,e_{i+1})$ is a legal $F$-pair when $v_i\in F$ (i.e. $i$ is odd), and a quasi-essential $W$-pair when $v_i\in W$. Hereinafter for $w\in W$ and $c,a\in E_w$, we call $(c,a)$ a \emph{quasi-legal} $W$-pair for $x$ if
  \begin{numitem1} \label{eq:quasi-leg}
(i) $c\in U_F^-(x)$ and $a\in U_F^+(x)$; (ii) $a$ is not interesting for $w$ under $x$; and (iii) the vector $z:=x_w-\onebf^c+\onebf^a$ is acceptable, i.e. $C_w(z)=z$; 
\end{numitem1}
and call $(c,a)$ a \emph{quasi-essential} $W$-pair for $x$ if, in addition,
  \begin{numitem1} \label{eq:quasi-ess}
for any $d\in E_w-\{a\}$, if $(c,d)$ is another quasi-legal $W$-pair, then the edge $d$ is not interesting for $w$ under $z:=x_w-\onebf^c+\onebf^a$.
  \end{numitem1}
(We prefer to add the prefix ``quasi'' to formally differ from Definitions~3,4 in Sect.~\SSEC{act_graph} where the stability of $x$ implies that~(ii) in~\refeq{quasi-leg} is valid automatically.) Like property~\refeq{U+a} for essential pairs, one shows (by arguing as in the proof of Lemma \ref{lm:essent_pair}) that for $c$ fixed, the quasi-essential $W$-pair $(c,a)$ (if exists) is determined uniquely.

As a consequence, the path $P$ as above is constructed uniquely. Three subcases are possible: 

(a) the last vertex $v'':=v_k$ belongs to $F$ and the last edge $e'':=e_k$ (which is in $U^+_F(x)$) does not have legal $F$-pair, i.e. $C_{v''}(x_{v''}+\onebf_{v''}^{e''})=x_{v''}+\onebf_{v''}^{e''}$; 

(b) $v''$ belongs to $W$ and the edge $e''$ (which is in $U^-_F(x)$) has no quasi-legal $W$-pair (whence it does not have quasi-essential $W$-pair either); 

(c) $v_k$ coincides with some $v_i$ ($i<k$) and, moreover, the part of $P$ from $v_i$ to $v_k$ forms a cycle  $R$, called a \emph{quasi-rotation}, in which the pair $(e'',e_i)$ is a legal $F$-pair when $v_k\in F$, and a quasi-essential $W$-pair when $v_k\in W$.  

In subcases (a),(b), we  shift $x$ along $P$ with weight 1, i.e. transform $x$ into $y:=x+\chi^{P}$, where $\chi^P(e)$ takes value 1 ($-1$) if $e\in P^+$ (resp. $e\in P^-$), and 0 otherwise. And in subcase~(c), we shift $x$ along the quasi-rotation $R$ with weight 1. Below (in Lemma~\ref{lm:quasi-shift}) we show that the obtained function, $y$ say, satisfies:
 \begin{numitem1} \label{eq:correct_y}
(i) $y$ is acceptable; (ii) $y_f\succeq_f x_f$ for all $f\in F$;  and (iii) $y$ obeys~\refeq{second_req}. 
  \end{numitem1}
  
 For convenience, we further refer to paths $P$ and cycles (quasi-rotations) $R$ as above as \emph{augmenting} paths and cycles.

So, in Case~2, the current iteration consists in: choosing an edge $e=wf\in U^+_F(x)$ satisfying $C_w(x_w+\onebf_w^e)=x_w+\onebf_w^e$, constructing the corresponding augmenting path or cycle, and shifting $x$ along this path/cycle with weight 1. In view of~\refeq{correct_y}(ii), the sequence of iterations is finite and the process terminates as soon as the current vector, $x'$ say, becomes free of edges $e$ as above (w.r.t. $x'$). Then Case~1 takes place, implying that $x'$ is a stable g-matching, as required. This completes Stage~1.
 \medskip
 
\noindent\textbf{\emph{Stage 2}.}
Here the goal is to transform the stable g-matching $x$ found at Stage~1 into $\xmin$. To do so, we exchange the roles of $F$ and $W$, considering the reversed lattice $(\Lscr,\prec_W)$, and construct a principal route $\Tscr$ going from $x'$ to the maximal element of this lattice, which is just $\xmin$. 
\medskip

The above algorithm is pseudo polynomial. In order to accelerate it, subject to condition~(C), at Stage~2 we compute the maximal weighs of rotation by use of procedure~(P) and accordingly apply rotations with these weights. And at Stage~1, we act similarly to compute the maximal feasible weights of augmenting paths/cycles found on iterations of this stage and then apply them with these weights. Arguing as in the proof of Theorem~\ref{tm:condC}, one shows that during Stage~1 all augmenting paths/cycles are different and their number is bounded by a polynomial in~$|V|$ (we omit details here). This together with~\refeq{complex_C} and Theorem~\ref{tm:condC} gives the following result.
   \begin{theorem} \label{tm:complex_condC}
Subject to condition~(C), in the weighted rotational poset $(\Uscr,\tau,\lessdot)$, the size $|\Uscr|$ does not exceed $4|E|^2|F| |W|$, and the poset can be constructed in weakly polynomial time, namely, the number of oracle calls and usual operations in the above (combined) algorithm is bounded by $\log \bmax$ times a polynomial in $|V|$.
 \end{theorem}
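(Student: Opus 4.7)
The plan is to combine three ingredients that are essentially already in place in the preceding text: Theorem~\ref{tm:condC}, the construction algorithm of Section~\SSEC{constr_poset} accelerated via procedure (P), and the two-stage alternative method for finding $\xmin$ presented at the end of Section~\SEC{special}. The argument splits naturally into (a)~the size bound $|\Uscr|\le |E|^2|F||W|$, (b)~the weakly polynomial bound on building $(\Uscr,\tau,\lessdot)$ assuming $\xmin$ is given, and (c)~a weakly polynomial procedure producing $\xmin$.

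For~(a), by Theorem~\ref{tm:gen_time} we have $|\Uscr|=N$, the length of any full route $\Tscr_0=(\xmin=x_0,\ldots,x_N=\xmax)$. Such $\Tscr_0$ is in particular non-excessive, so Theorem~\ref{tm:condC}(ii) gives $N<|E|^2|F||W|$ at once. For~(b), the algorithm of Section~\SSEC{constr_poset} performs Stage~0 plus $N$ additional stages, each of which amounts to executing a principal route (Corollary~\ref{cor:princ_poset}). A principal route of length $\ell$ requires $\ell$ calls to the subroutines~\refeq{find_R(x)} and~\refeq{find_tau}; the former costs $O(|E|^2)$ oracle calls independently of $\bmax$, while the latter, under condition~(C), is handled by procedure~(P) in $O(\log\bmax)$ iterations, each itself costing a polynomial in $|V|$ number of oracle calls. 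Since $\ell\le N$ is polynomial in $|V|$ by~(a), the overall bound $\log\bmax$ times a polynomial in $|V|$ follows.

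For~(c), the method reviewed in Section~\SSEC{xmin} uses $O(|E|\bmax)$ iterations and is only pseudo polynomial, so I rely on the alternative two-stage construction. Stage~1 starts from $x:=0$ (which trivially satisfies~\refeq{first_req} and~\refeq{second_req}) and iteratively shifts $x$ along an augmenting edge-simple path or quasi-rotation; property~\refeq{correct_y}(ii) shows the sequence of iterates is strictly increasing in $\prec_F$, and terminates at a stable g-matching $x'$. The key claim to justify is that, subject to~(C), the augmenting paths/cycles that appear are pairwise distinct and their total number is bounded by a polynomial in $|V|$; this is proved by mimicking the proof of Theorem~\ref{tm:condC}, exploiting that legal $F$-pairs and quasi-essential $W$-pairs along a persistent edge must coincide across iterations because of the ``no gap'' condition on the choice functions. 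Combined with a variant of procedure~(P) applied to augmenting paths/cycles (replacing $\chi^R$ by the corresponding $\pm 1$ incidence vector and checking stability plus preservation of all pairs at the median weight), Stage~1 runs in $\log\bmax$ times a polynomial in $|V|$. Stage~2 then constructs a principal route from $x'$ to $\xmin$ in the reverse lattice $(\Sscr,\prec_W)$, which is governed by Theorem~\ref{tm:condC} (applied to $\prec_W$, which is legitimate by the polarity in~\refeq{AG}(b)) and by procedure~(P); the same analysis as in~(b) gives the desired weakly polynomial bound.

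The main obstacle is the Stage~1 analysis in~(c): namely, to show that under~(C) the augmenting paths and quasi-rotations behave like rotations do in Theorem~\ref{tm:condC}, in that a legal $F$-pair $(a,c)$ (resp.\ quasi-essential $W$-pair $(c,a)$) that appears at some iteration and survives until a later iteration must in fact be present at every intermediate iteration. I expect this to be proved by adapting~\refeq{xj_a_interest}: one fixes an edge $a$ which is interesting for $f$ at iterates $x^i$ and $x^k$ with $C_f(x_f^i+\onebf^a)-C_f(x_f^k+\onebf^a)$ witnessing the same $c$, and applies condition~(C) to the triple $x_f^i\prec_f x_f^j\prec_f x_f^k$ to force $c$ also at $x^j$; the symmetric argument handles quasi-essential $W$-pairs. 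Once this invariance is in hand, the same grouping of iterates into $\Sigma(F)$- and $\Sigma(W)$-intervals yields a bound polynomial in $|V|$ on the number of Stage~1 iterations, and assembling~(a), (b), (c) completes the proof.
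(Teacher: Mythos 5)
Your proposal matches the paper's argument essentially step for step: part (a) is exactly how the paper deduces $|\Uscr|=N<|E|^2|F||W|$ from Theorem~\ref{tm:gen_time} and Theorem~\ref{tm:condC}(ii); part (b) is the content of assertion~\refeq{complex_C}; and part (c) is the paper's own combination of the alternative two-stage $\xmin$-finding method with procedure~(P), including the same appeal to ``argue as in the proof of Theorem~\ref{tm:condC}'' for bounding Stage~1 (the paper itself leaves those details unspoken, as do you). No genuine divergence in strategy.
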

 
We finish this section with the next lemma that verifies the correctness of Stage~1 of the algorithm.

\begin{lemma} \label{lm:quasi-shift}
The vector $y$ obtained from $x$ at a current iteration satisfies~\refeq{correct_y}.
 \end{lemma}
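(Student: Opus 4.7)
The plan is to verify the three claims of~\refeq{correct_y} by a vertex-by-vertex analysis, treating separately the three construction subcases~(a)--(c) and, within each, distinguishing vertices outside the path/cycle, internal vertices, and endpoints of $P$ (subcases~(a),(b)) or of $R$ (subcase~(c), where every vertex is ``internal'' in the natural sense).

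For~(i), vertices $v$ outside $P$ (or $R$) inherit acceptability from $x$ via~\refeq{first_req}. For an internal $f\in F$ lying between an incoming positive edge $a$ and an outgoing negative edge $c$, the defining property of a legal $f$-pair gives $y_f=x_f+\onebf^a-\onebf^c=C_f(x_f+\onebf^a)$, and acceptability then follows from the idempotence $C_f\circ C_f=C_f$ implied by~(A1). Internal $w\in W$ are acceptable by clause~(iii) of~\refeq{quasi-leg} in the definition of a quasi-essential pair. The starting vertex $w'=v_0$ and, in subcase~(a), the terminal vertex $f''=v_k$, are acceptable by the very selection rules producing them (namely $C_{w'}(x_{w'}+\onebf^{e'})=x_{w'}+\onebf^{e'}$ and the analogous identity at $f''$). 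In subcase~(b), the terminal vertex $w=v_k$ has $y_w=x_w-\onebf^{e''}$: applying~(A2) to $x_w\ge x_w-\onebf^{e''}$ gives $x_w-\onebf^{e''}=C_w(x_w)\wedge(x_w-\onebf^{e''})\le C_w(x_w-\onebf^{e''})$, which combined with $C_w(z)\le z$ yields $C_w(x_w-\onebf^{e''})=x_w-\onebf^{e''}$.

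For~(ii), only vertices $f\in F\cap V_P$ need attention: internal vertices use Lemma~\ref{lm:acd}(ii) directly, while the terminal $f''$ in subcase~(a) uses the stationarity identity~\refeq{plott} to rewrite $C_{f''}(y_{f''}\vee x_{f''})=C_{f''}(x_{f''}+\onebf^{e''})=y_{f''}$, giving $x_{f''}\prec_{f''} y_{f''}$.

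The main obstacle will be~(iii), preservation of~\refeq{second_req}. The plan is to argue by contradiction: suppose there exist $e=wf\in U^+_F(y)$ interesting for $w$ under $y$ together with $e'\in E_w-\{e\}$ such that $C_w(y_w+\onebf^e)=y_w+\onebf^e-\onebf^{e'}$. Split on the position of $w$. If $w$ lies off the path/cycle, then $y_w=x_w$, so $e$ is interesting for $w$ under $x$ and $C_w(x_w+\onebf^e)=x_w+\onebf^e-\onebf^{e'}$; the extra interestingness of $e$ for $f$ at $y$ (relative to $x$) must come from the shift at some $f\in V_P$, and using axioms~(A2),(A3) together with~\refeq{plott} and Lemma~\ref{lm:e_interest} to transport interestingness back from $y_f$ to $x_f$, one recovers a violation of~\refeq{second_req} for the input $x$, contradicting the induction hypothesis. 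If $w$ is internal to $P$ (or lies on $R$), one uses Lemma~\ref{lm:partW} and property~\refeq{U+a} to show that the relation $C_w(y_w+\onebf^e)=y_w+\onebf^e-\onebf^{e'}$ transports back to give a \emph{new} quasi-legal $w$-pair at $x_w$ incompatible with the unique quasi-essential pair $(c,a)$ already used at $w$ in $P$, which is ruled out by the uniqueness asserted after~\refeq{quasi-ess}. If $w=v_k$ is the terminal vertex of subcase~(b), one shows that the pair $(e'',e)$ would in fact be quasi-legal at $x$ (via the same transport argument, exploiting the endpoint acceptability already established in~(i)), contradicting the termination condition that no such pair existed. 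The subtlest subsubcase will be internal $w$ when $e'$ equals neither $c$ nor $a$: there the conjunction of~(A2) applied to the vector $y_w+\onebf^e\vee(x_w+\onebf^e)$ and~(A3) applied along the chain between $x_w$ and $y_w$ should force $e'\in\{c,a\}$ after all, closing the case.
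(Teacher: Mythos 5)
Your proposal takes a genuinely different decomposition from the paper's. The paper does not argue directly about the final vector $y$; instead it factors the shift $x\mapsto y$ along $P$ (or $R$) into a sequence of \emph{atomic} operations --- applying one legal $F$-pair $(a,c)$ at a single firm, or one quasi-essential $W$-pair $(c,a)$ at a single worker --- and proves that each atomic step preserves the invariants~\refeq{first_req},~\refeq{second_req}. This is packaged as assertions (A),(B) about a single step $x'\mapsto x''$, and only (B)(b2) is proved in detail; (a1)--(a3), (b1), and the reduction from the lemma to (A),(B) are declared ``routine.'' The advantage of that factorization is locality: in the crucial assertion (b2), only $x'_w$ changes while every firm $f$ incident to a third edge $d\in E_w$ retains $x''_f=x'_f$, so $d\in U^+_F(x'')\Leftrightarrow d\in U^+_F(x')$ is immediate, and one can invoke~\refeq{second_req} at $x'$ with no transport argument. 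Your direct, vertex-by-vertex analysis of $y$ bypasses the need to justify the reduction to (A),(B), but it pays for that by having to do nontrivial interestingness-transport across the whole shift at firms of $V_P$ (the firm-side claim that an edge $e\notin\{a,c\}$ interesting for $f$ under $x_f+\onebf^a-\onebf^c$ was already interesting under $x_f$ is true and provable via~(A2),(A3) and~\refeq{plott}, but not immediate --- it is the content of the first chunk of your Case~1 sketch).

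A few caveats to make your route work. First, you invoke Lemma~\ref{lm:partW} and property~\refeq{U+a}, both of which are established under the hypothesis that $x$ is \emph{stable}; in this lemma $x$ only satisfies the weaker conditions~\refeq{first_req},~\refeq{second_req}, so you must re-derive their ``quasi'' analogues in this relaxed setting (the paper acknowledges this implicitly: it re-states uniqueness of quasi-essential pairs after~\refeq{quasi-ess} rather than citing~\refeq{U+a}). Second, your Case~2 contradiction is phrased as violating ``the uniqueness asserted after~\refeq{quasi-ess},'' but the actual contradiction one gets (as in the paper's Case~(i) of (b2)) is with the \emph{defining property}~\refeq{quasi-ess} itself: if $(c,d)$ turns out to be quasi-legal and $(c,a)$ is quasi-essential, then $d$ must fail to be interesting for $w$ under $x_w-\onebf^c+\onebf^a$, contradicting the assumption. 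Third, your part-(iii) case split omits the starting vertex $w'=v_0$, where $y_{w'}=x_{w'}+\onebf^{e_1}$ with no subtracted edge; it is a degenerate variant of your ``internal $w$'' case and is covered in the paper by the ``Also similar assertions hold for $x':=x$ and $a:=e_1$ (ignoring $c$)'' clause appended to~(B), but in your direct approach it needs an explicit line. With those three repairs, your argument is a viable alternative route to the lemma.
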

  \begin{proof}
~It is outlined in a sketched manner, leaving some details to the reader. We apply reasonings similar or close to ones in Sect.~\SEC{act-rot} (in particular, in the proofs of Lemmas~\ref{lm:acac} and~\ref{lm:partW}). On this way, one can realize that the lemma is reduced to the following assertions, where a vector $x'\in\Zset_+^E$ satisfies~\refeq{first_req} and~\refeq{second_req}.
\begin{itemize}
\item[(A)] For $f\in F$ and $a,c\in E_f$, let $(a,c)$ be a legal $F$-pair for $x'$, and let $x''$ be obtained by applying $(a,c)$ to $x'$ (i.e. $x''_f=x'_f+\onebf_f^a-\onebf_f^c$). Then: (a1) $x''_f\succ x'_f$; (a2) if $a'\in E_f-\{a\}$ is interesting for $f$ under $x'$, then so is under $x''$; and (a3) all legal $F$-pairs $(a',c')$ for $x'$ with $a'\ne a$ and $c'\ne c$ are legal for $x''$ as well.
\item[(B)] For $w\in W$ and $c,a\in E_w$, let $(c,a)$ be a quasi-essential $W$-pair, and let $x''$ be obtained by applying $(c,a)$ to $x'$ (i.e. $x''_w=x'_w-\onebf_w^c+\onebf_w^a$). Then: (b1) all quasi-essential $W$-pairs $(c',a')$ for $x'$ with $c'\ne c$ and $a'\ne a$ are quasi-essential for $x''$ as well; and (b2) if $d=wf\in E_w-\{a,c\}$ is interesting for both $f$ and $w$ under $x''$, then $C_w(x''_w+\onebf_w^d)=x''_w+\onebf_w^d$. Also similar assertions hold for $x':=x$ and $a:=e_1$ (ignoring $c$), where $e_1$ is the first edge in the path $P$.
  \end{itemize}
  
The proofs of (a1)--(a3) and (b1) are routine in essence (as being close to corresponding assertions from Sect.~\SEC{act-rot}). The crucial assertion is (b2); we prove it as follows. 

Let $C:=C_w$ and $\onebf^\bullet:=\onebf_w^\bullet$. Consider $z:=x''_w$ ($=x'_w-\onebf^c+\onebf^a$). Suppose that there is $d\in E_w-\{c,a\}$ violating (b2), i.e. $d\in U_F^+(x'')$ and $C(z+\onebf^d)=z+\onebf^d-\onebf^{c'}$ for some $c'$. Note that $d\in U_F^+(x'')$ implies $d\in U_F^+(x')$. Then either (i) $C(x'_w+\onebf^d)=x'_w$, or (ii) $C(x'_w+\onebf^d)=x'_w+\onebf^d$ (by~\refeq{second_req} applied to $x=x'$ and $e=d$). 

In Case~(i), let $z':=x-\onebf^c+\onebf^d$. Then $z+\onebf^d>z'$, and applying~(A2), we have $C(z+\onebf^d)\wedge z'\le C(z')$, which implies $C(z')=z'$. It follows that $(c,d)$ is quasi-legal for $x'$, in view of~(i). But then, by~\refeq{quasi-ess}, $d$ is not interesting for $w$ under $z$; a contradiction.

So we are in case~(ii). Let $\tilde z:=x'_w+\onebf^a+\onebf^d$. Then $\tilde z> x'_w+\onebf^d$ implies $|C(\tilde z)|\ge |x'_w+\onebf^d|$, by~(A3). In view of $C(x'_w+\onebf^a)=x'_w$, we have $C(\tilde z)=C(x'_w+\onebf^d)=x'_w+\onebf^d$. On the other hand, $\tilde z>z+\onebf^d$ implies $C(\tilde z)\wedge (z+\onebf^d)\le C(z+\onebf^d)=z+\onebf^d-\onebf^{c'}$. But $C(\tilde z)(c')=z(c')=x'(c')$, whereas  $C(z+\onebf^d)(c')=x'(c')-1$; a contradiction.

This completes the proof of the lemma.
\end{proof}


\section{Concluding remarks} \label{sec:concl}

We finish this paper with a brief outline of two related issues.
\medskip

\textbf{I.} As a natural generalization of SGMM, one can address the \emph{problem of finding a stable g-matching of minimum cost}: 
  \begin{numitem1} \label{eq:ast}
Given \emph{costs} $c(e)\in\Zset$ of edges $e\in E$, find $x\in\Sscr$ minimizing the total cost $cx:=\sum(c(e)x(e)\colon e\in E)$. 
  \end{numitem1}
  
By Theorem~\ref{tm:isomorph_complete}, $\Sscr$ is represented via the set of closed functions for the poset $\Pscr=(\Uscr,\tau,\lessdot)$, and we know that in a general case of SGMM, the size $|\Uscr|=N$ of this poset is estimated as $O(\bmax|E|)$ (by Theorem~\ref{tm:gen_time}  and Lemma~\ref{lm:N}). This enables us to solve problem~\refeq{ast} in time polynomial time in $O(\bmax|E|)$, whenever the poset $\Pscr$ is explicitly known. (This poset can be found in time polynomial in $\bmax,N,|V|$; cf. Theorem~\ref{tm:gen_time}.) To this aim, we apply an approach originally elaborated in Irving et al.~\cite{ILG} to minimize a linear function over stable marriages and subsequently extended to some other models of stability. It consists in a reduction to the standard minimum cut problem, by a method due to Picard~\cite{pic}. A brief outline of how it works in our case of SGMM is as follows.

First we compute the cost $c_R:=c(R^+)-c(R^-)$ of each rotation $R\in\Rscr$. Then for each $x\in\Sscr$ and the corresponding closed function $\zeta=\phi(x)$ on $\Uscr$ (cf. Definition~9), we have $cx=c\xmin+\sum(c_R\zeta(R^{(i)})\colon R^{(i)}\in \Uscr)$ (where $R^{(i)}$ is a labeled copy of $R$ in $\Uscr$).
This gives the problem equivalent to~\refeq{ast}, namely:
 \begin{numitem1} \label{eq:astast}
Find a closed function $\zeta$ minimizing the weight $c^\zeta:=\sum(c_R\zeta(R^{(i)})\colon R^{(i)}\in\Uscr)$. 
  \end{numitem1}
Clearly \refeq{astast} has an optimal solution determined by an ideal $\Iscr$ of $(\Uscr,\lessdot)$ (in this case $\zeta(R^{(i)})=\tau_{R^{(i)}})$ if $R^{(i)}\in \Iscr$, and 0 otherwise).

To solve \refeq{astast}, we act as in~\cite{pic} and transform the Hasse diagram $H=(\Uscr,\Escr)$ of the poset $\Pscr$ into directed graph $\hat H=(\hat\Uscr,\hat \Escr)$ with edge capacities $h$, as follows:

(a) add two vertices: ``source'' $s$ and ``sink'' $t$; 

(b) add the set $\Escr^+$ of edges $(s,R^{(i)})$ for all $R^{(i)}\in\Uscr^+:=\{R^{(i)}\in \Uscr\colon c_{R^{(i)}}>0\}$;

(c) add the set $\Escr^-$ of edges $(R^{(i)},t)$ for all $R^{(i)}\in\Uscr^-:=\{R^{(i)}\in\Uscr\colon c_{R^{(i)}}<0\}$;

(d) assign the capacities $h(s,R^{(i)}):=c_{R^{(i)}}\tau_{R^{(i)}}$ for $(s,R^{(i)})\in \Escr^+$,  $h(R^{(i)},t):=|c_{R^{(i)}}\tau_{R^{(i)}}|$ for $(R^{(i)},t)\in \Escr^-$, and $h(R^{(i)},{R'}^{(j)}):=\infty$ for $(R^{(i)},{R'}^{(j})\in\Escr$.
 \smallskip
 
Now it is rather straightforward to show that an ideal $\Iscr$ of $(\Uscr,\lessdot)$ determines an optimal solution to \refeq{astast} if and only if the set of edges of $\hat H$ going from $\{s\}\cup(\Uscr-\Iscr)$ to $\{t\}\cup\Iscr$ forms a minimum capacity cut among those separating $s$ from $t$. 

Thus,~\refeq{ast} is reduced to the minimum $s$--$t$ cut problem in the graph $\hat H=(\hat\Uscr,\hat \Escr)$ with nonnegative integer edge capacities $h$, for which there is known plenty of solution algorithms of complexity $O(n^3)$ and faster (where $n$ is the number of vertices of a graph). Summarizing explanations above, we obtain the following
  \begin{prop} \label{pr:cost_probl}
{\rm(i)} When the poset $(\Uscr,\tau,\lessdot)$ is available, problem~\refeq{ast} is solvable in time polynomial in $|\Uscr|$, or $\bmax|E|.$

{\rm(ii)} Given $G,b,C$ as above and $c\in\Zset^E$, if the gapless condition~(C) is valid, then  problem~\refeq{ast} is solvable in time $\log \bmax$ times a polynomial in $V$, including oracle calls for CFs (where the factor of $\log \bmax$ arises when computing weights $\tau$).
\end{prop}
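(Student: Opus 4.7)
The plan is to derive both parts from the poset representation in Theorem~\ref{tm:isomorph_complete} together with Picard's~\cite{pic} reduction of the selection of a minimum weight ideal in a finite poset to a minimum $s$--$t$ cut problem in an auxiliary directed graph.

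First I would verify the cost decomposition. For $x\in\Sscr$, let $\zeta=\phi(x)$ be the closed function on $(\Uscr,\tau,\lessdot)$ from~\refeq{phi_x}. By the construction of $\phi$ via a non-excessive route from $\xmin$ to $x$ and by $\chi^R=\chi^{R^+}-\chi^{R^-}$, one has $x=\xmin+\sum(\zeta(R^{(i)})\chi^R\colon R^{(i)}\in\Uscr)$, whence
\[
  cx=c\xmin+\sum(c_R\,\zeta(R^{(i)})\colon R^{(i)}\in\Uscr),
\]
where $c_R=c(R^+)-c(R^-)$ depends only on the (unlabeled) rotation $R$. Minimizing $cx$ over $\Sscr$ is therefore equivalent to~\refeq{astast}.

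Next I would argue that~\refeq{astast} admits an optimal solution determined by an ideal of $(\Uscr,\lessdot)$. Fix a closed $\zeta$ and scan $R^{(i)}\in\Uscr$: since the contribution $c_R\zeta(R^{(i)})$ is linear in $\zeta(R^{(i)})\in[0,\tau_{R^{(i)}}]$, one can replace $\zeta(R^{(i)})$ either by $0$ or by $\tau_{R^{(i)}}$ without increasing $c^\zeta$, provided closedness is preserved. Concretely, let $\Iscr:=\{R^{(i)}\colon \zeta(R^{(i)})>0\}$; closedness forces $\zeta(R^{(i)})=\tau_{R^{(i)}}$ whenever $R^{(i)}$ has a strictly smaller element of $\Iscr$ above it in $\lessdot$. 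Rounding down those $R^{(i)}\in\Iscr$ with $c_R>0$ and no successor in $\Iscr$, we obtain a new closed $\zeta'$ with $c^{\zeta'}\le c^\zeta$; iterating yields $\zeta^\ast$ such that $\Iscr^\ast:=\{R^{(i)}\colon \zeta^\ast(R^{(i)})>0\}$ is an ideal and $\zeta^\ast\equiv\tau$ on $\Iscr^\ast$. Thus~\refeq{astast} reduces to selecting an ideal $\Iscr\subseteq\Uscr$ minimizing $\sum(c_R\tau_{R^{(i)}}\colon R^{(i)}\in\Iscr)$.

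Then I would invoke Picard's construction: in $\hat H=(\hat\Uscr,\hat\Escr)$ with capacities $h$ as defined in the text, a finite-capacity $s$--$t$ cut must contain every edge of the Hasse diagram $H$ going from $\Uscr-\Iscr$ to $\Iscr$; for this to be finite, the ``sink side minus $\{t\}$'' of the cut must be an ideal $\Iscr$ of $(\Uscr,\lessdot)$. A routine computation (or direct citation of~\cite{pic}) shows that the capacity of such a cut equals $\sum(c_{R^{(i)}}\tau_{R^{(i)}}\colon R^{(i)}\in\Uscr^+)+\sum(c_R\tau_{R^{(i)}}\colon R^{(i)}\in\Iscr)$ up to the additive constant $\sum_{\Uscr^+}c_R\tau_{R^{(i)}}$. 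Hence minimum $s$--$t$ cuts in $\hat H$ biject with optimal ideals in $(\Uscr,\lessdot)$, and composing with $\zeta\mapsto\xmin+\sum\tau\chi^R$ yields an optimal $x\in\Sscr$. Since $\hat H$ has $|\Uscr|+2$ vertices and $O(|\Uscr|^2)$ edges, any polynomial-time max-flow/min-cut routine settles~(i); by Theorem~\ref{tm:gen_time} and Lemma~\ref{lm:N}, $|\Uscr|=O(\bmax|E|)$. For~(ii), Theorem~\ref{tm:complex_condC} together with the weakly polynomial construction of $\xmin$ described in Sect.~\SSEC{xmin} produce $(\Uscr,\tau,\lessdot)$ in time $\log\bmax$ times a polynomial in $|V|$; the subsequent min-cut step then runs in polynomial time in $|V|$, yielding the claimed bound.

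The main obstacle is the rounding step that converts an arbitrary optimal closed $\zeta$ into one supported on an ideal with value $\tau$, since one must check that the closedness constraint is preserved under the local modifications; the rest is bookkeeping plus standard applications of Picard's theorem and of the complexity estimates already established.
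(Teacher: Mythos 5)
Your proposal follows the same route as the paper: decompose $cx=c\xmin+\sum c_R\,\zeta(R^{(i)})$ via Theorem~\ref{tm:isomorph_complete}, reduce~\refeq{ast} to selecting an ideal minimizing $\sum c_R\tau$, and then apply Picard's min-cut construction on $\hat H$; the complexity claims are read off from Lemma~\ref{lm:N}, Theorem~\ref{tm:gen_time} and Theorem~\ref{tm:complex_condC}. The paper is quite terse on the ideal-reduction and capacity bookkeeping, and you supply more detail there, which is welcome.

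However, your rounding argument is incomplete. Setting $\Iscr=\{R^{(i)}\colon\zeta(R^{(i)})>0\}$, closedness already makes $\Iscr$ a (lower) ideal, so the only thing to fix is the value of $\zeta$ on the \emph{maximal} elements of $\Iscr$ (everything else is forced to equal $\tau$). You only describe \emph{rounding down} the maximal $R^{(i)}$ with $c_R>0$; this leaves untouched any maximal $R^{(i)}$ with $c_R\le 0$ and $0<\zeta(R^{(i)})<\tau(R^{(i)})$, for which the iteration terminates without reaching a $\{0,\tau\}$-valued function. You need the symmetric step: round \emph{up} to $\tau(R^{(i)})$ any maximal $R^{(i)}$ with $c_R\le 0$ (this does not increase $c^\zeta$ and preserves closedness, since all predecessors of $R^{(i)}$ are already at $\tau$). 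With both rounding directions, a single sweep over the maximal elements of $\Iscr$ produces the desired ideal-supported optimum; the ``iteration'' you invoke is otherwise unjustified.

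Two smaller blemishes: (1) the additive constant in the cut-capacity identity should come from $\Uscr^-$, not $\Uscr^+$ --- a direct computation gives ${\rm cap}(\Iscr)=c^\zeta+\sum(|c_R|\tau_{R^{(i)}}\colon R^{(i)}\in\Uscr^-)$, so minimum cuts correspond to optimal ideals (the conclusion you want is fine, but the formula as written is off). (2) For part~(ii), the weakly polynomial construction of $\xmin$ is the \emph{alternative} method of Sect.~\SEC{special}, not the method of Sect.~\SSEC{xmin} (which is only pseudo polynomial); in fact Theorem~\ref{tm:complex_condC} already subsumes the whole poset construction including $\xmin$, so no additional reference is needed.
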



\textbf{II.} By Lemma~\ref{lm:N}, the length $N$ of the full route does not exceed $\bmax|E|/2$. As is shown in~\cite{karz3}, there is a ``small'' graph $G=(V,E)$ with ``big'' capacities $b$ such that $N=\bmax$. We briefly outline this construction, referring for details to~\cite{karz3}. The graph $G$ is drawn in the left fragment of  the picture below.

\vspace{-0.2cm}
\begin{center}
\includegraphics[scale=0.8]{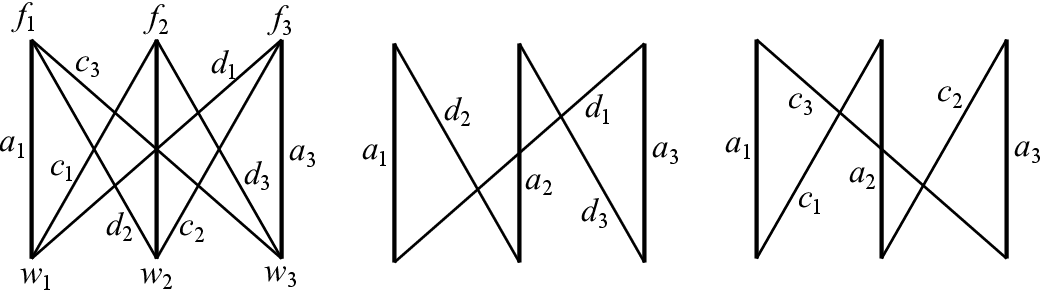}
\end{center}
\vspace{-0.3cm}

Here:
  \begin{numitem1}\label{eq:ex3x3}
\begin{itemize}
\item[(i)]
the worker set $W$ is $\{w_1,w_2,w_3\}$ and the firm set $F$ is $\{f_1,f_2,f_3\}$; for $i=1,2,3$, the set $E_{w_i}$ consists of three edges $w_if_i$, $w_if_{i+1}$, $w_if_{i-1}$, denoted as $a_i,c_i,d_i$, respectively (taking indexes modulo 3);
  \item[(ii)]
the capacities $b$ of vertical (bold) edges $w_if_i$ ($i=1,2,3$) are equal to the same even integer $q=2p$, and the capacities of other (thin) edges $w_if_j$, $i\ne j$, to the same number $p$; for each vertex $v$, the sum of values on $E_v$ is bounded by the quota $q$, and $\Bscr_v:=\{z\in \Zset_+^{E_v}\colon z(e)\le b(e)\; \forall e\in E_v\}$;
  \item[(iii)]
for each $w_i$, the choice function $C_{w_i}$ is subject to the linear order 
$c_i>d_i>a_i$ (cf.~Example~1 in Sect.~\SEC{defin});
  \item[(iv)]
for each $f_i$, the choice function $C:=C_{f_i}$ is defined by the following rule:  
for $z\in \Bscr_{f_i}$, (a) if $|z|\le q$, then $C(z)=z$; and (b) if $|z|>q$, then  $C(z)(a_i):=z(a_i)$, $C(z)(c_{i-1}):=\zeta$ and $C(z)(d_{i+1}):=\delta$, where $\zeta,\delta$ are such that $\zeta\le z(c_{i-1})$, ~$\delta\le z(d_{i+1})$, ~$z(a_i)+\zeta+\delta =q$, and there hold:
 \begin{itemize}
\item[(b1)]   $|\zeta-\delta|$ is minimum; and
\item[(b2)] subject to~(b1), if $\zeta<\delta$, then $z(c_{i-1})=\zeta$.
  \end{itemize}
 \end{itemize}
    \end{numitem1}

One shows that the CFs $C_{f_i}$ satisfy axioms~(A2),(A4). Also $\bmax=q$. We construct the sequence of g-matchings $x^0,x^1,\ldots, x^q$ as follows:
 \begin{numitem1} \label{eq:q_seq}
for $k=0,\ldots,q$ and $i=1,2,3$,

$x^k(a_i):=k$; for $k$ even, $x^k(c_i)=x^k(d_i):=p-i/2$; and for $k$ odd, 
$x^k(c_i):= p-(i-1)/2$, and $x^k(d_i):=p-(i+1)/2$.
 \end{numitem1}

In particular, $x^0(a_i)=0$ and $x^0(c_i)=x^0(d_i)=p$ for $i=1,2,3$, which is
nothing else than the initial (the best for $W$) stable g-matching $\xmin$. Moreover, one can check that: (a) each $x^k$ is stable, and $x^q=\xmax$ (taking the value $q$ on each $a_i$, and 0 on the $c_i$ and $d_i$); (b) the sequence $x^0,x^1,\ldots,x^q$ is the \emph{unique} full route $\Tscr$; (c) for $k$ even, $x^{k+1}$ is obtained from $x_k$ by shifting with unit weight along the rotation $R$ formed by the sequence of edges $a_1,d_2,a_2,d_3,a_3,d_1$ (where $R^+=\{a_1,a_2,a_3\}$), illustrated in the middle fragment of the above picture; and (d) for $k$ odd, $x^{k+1}$ is obtained from $x^k$ by shifting with unit weight along the rotation $R'$ formed by the sequence of edges $a_1,c_3,a_3,c_2,a_2,c_1$ (where $R'^+=\{a_1,a_2,a_3\}$), illustrated in the right fragment of the above picture. So $R$ and $R'$ alternate, and there is no other rotation used in $\Tscr$.

Thus, the corresponding rotational poset is viewed as a chain consisting of $q/2=\bmax/2$ copies of $R$ and $q/2$ copies of $R'$ (which alternate and never commute). This poset has $q+1$ closed sets, which correspond to the stable g-matchings $x^0,\ldots,x^q$. The even parameter $q>0$ can be chosen arbitrarily, and we conclude with the following
  \begin{corollary} \label{cor:Cnot}
In SGMM, if condition~(C) is not imposed, then there is a series of
instances with a fixed graph $G$ such that in the poset representations of the lattices of stable g-matchings, the posets have pseudo polynomial sizes (proportional to $\bmax$).
 \end{corollary}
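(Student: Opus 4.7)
The plan is to exhibit the concrete family of instances described in (7.3) and to verify, step by step, that it realizes all the properties claimed in the paragraph preceding the corollary; this will immediately yield a rotational poset of size $q=\bmax$. Since the graph $G$ is fixed (six vertices, nine edges) while the even parameter $q=2p$ is free, the claim about pseudo polynomial blow-up follows at once.

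First I would check the structural prerequisites. The worker-side choice functions $C_{w_i}$ are induced by a linear order and a quota, so they automatically satisfy (A1), (A2), (A4) by the discussion after Example~1. The nontrivial step is to verify that each $C_{f_i}$ defined in (7.3)(iv) also satisfies (A1), (A2) and (A4); here~(A4) is built into the definition (since $|C(z)|=\min\{|z|,q\}$), and (A1),(A2) can be checked by a finite case analysis on how the tie-breaking rule (b1),(b2) reacts to replacing $z$ by $z'\le z$ or by $z\ge z'\ge C(z)$. This is routine but is where one must be careful: the rule (b2) is asymmetric between $c_{i-1}$ and $d_{i+1}$, and this asymmetry is exactly what makes condition~(C) fail (the rotations $R$ and $R'$ will carry the same endpoints in $F$ but force different discarded edges depending on the current load). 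I would record a small lemma that~(C) is violated at $f_i$ using three suitably chosen $z^1\prec_{f_i}z^2\prec_{f_i} z^3$ along the sequence (7.4).

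Next, for the sequence $(x^0,\ldots,x^q)$ defined by (7.4), I would verify acceptability at every vertex (immediate from $|x^k_v|=q$ and the definitions of the CFs), and then stability: no edge $e=w_if_j$ is blocking $x^k$. For the worker side this is transparent because the linear order $c_i>d_i>a_i$ is respected; for the firm side, one checks by inspection of the tie-breaking rule that any candidate increase on $a_j$, $c_j$, or $d_j$ is blocked either by $w$'s or by $f$'s preferences. Then $x^0$ (zero on $a$-edges, full on $c$-edges and $d$-edges) can be identified with $\xmin$ by the method of Sect.~5.2 (or directly, because it is clearly the $W$-best acceptable point), and $x^q$ (full on $a$-edges, zero on $c,d$-edges) with $\xmax$.

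The decisive step is to show that the route $(x^0,x^1,\ldots,x^q)$ is non-excessive and, in fact, unique, with the rotations $R$ and $R'$ strictly alternating. The computation of the active graph $\Gamma(x^k)$ from Sect.~3 is explicit: using the tie-breaking rule~(iv) of (7.3), for $k$ even one finds exactly the legal $F$-pairs and essential $W$-pairs assembling into the cycle $R=(a_1,d_2,a_2,d_3,a_3,d_1)$, while for $k$ odd they assemble into $R'=(a_1,c_3,a_3,c_2,a_2,c_1)$; in either case $\tau_R(x^k)=\tau_{R'}(x^k)=1$, because one application of the rotation changes the parity of $k$ and therefore toggles which of $R,R'$ is active. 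Once this is established, Proposition~\ref{pr:invar_rot} ensures that every full route has the same multiset of weighted rotations, so $|\Uscr|=q=\bmax$. The main obstacle is the verification of~(C)'s failure together with the computation that $\Rscr(x^k)$ consists of exactly one rotation for every $k$ (otherwise the route would not be unique and $\Uscr$ could in principle be smaller); both rely on patiently unwinding the asymmetric rule (b2) in (7.3)(iv). Letting $q=2p$ grow with fixed $G$ gives the desired pseudo polynomial family.
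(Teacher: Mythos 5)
Your proposal is correct and follows essentially the same path as the paper: it unpacks the explicit six-vertex instance from~\refeq{ex3x3}, checks that the firm-side CFs satisfy (A1),(A2),(A4) while violating~(C), verifies that~\refeq{q_seq} gives the unique full route in which $R$ and $R'$ strictly alternate with unit weights, and concludes that the rotational poset is a chain of $q=\bmax$ elements on a fixed graph. The paper presents this as a sketch with details deferred to~\cite{karz3}, and your outline is a faithful expansion of that sketch. One small simplification you could make: once you have a full route of length $q$, the identity $|\Uscr|=N$ from Theorem~\ref{tm:gen_time} already gives $|\Uscr|=q$, so invoking Proposition~\ref{pr:invar_rot} and uniqueness of the route, while true, is not strictly needed for the size bound.
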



\begin{thebibliography}{99}
\small
 %
 %
 \vspace{-7pt}
\bibitem{AG}
A.~Alkan and D.~Gale. Stable schedule matching under revealed preference.
\textsl{J.~Economic Theory} \textbf{112} (2003) 289--306.
 %
\vspace{-7pt}
 \bibitem{BB}
M.~Baiou and M.~Balinski. Erratum: the stable allocation (or ordinal
transportation) problem. \textsl{Math. Oper. Res.} \textbf{27} (4) (2002)
662--680.
 %
\vspace{-7pt}
  \bibitem{BAM} V.~Bansal, A.~Agrawal, and V.S.~Malhotra, Polynomial time algorithm for an optimal stable assignment with multiple partners. \textsl{Theoretical Computer Science} \textbf{379} (3) (2007) 317--328.
 %
\vspace{-7pt}
  \bibitem{birk} G.~Birkhoff. Rings of sets. \textsl{Duke Mathematical Journal} \textbf{3} (3) (1937) 443--454.
 %
\vspace{-7pt}
 \bibitem{blair}
C. Blair. The lattice structure of the set of stable matchings with multiple
partners. \textsl{Math. Oper. Res.} \textbf{13} (1988) 619--628.
 %
\vspace{-7pt}
  \bibitem{DM} B.C.~Dean and S.~Munshi, Faster algorithms for stable allocation problems.
\textsl{Algorithmica} \textbf{58} (1) (2010) 59--81.
 %
\vspace{-7pt}
 \bibitem{GS}
D.~Gale and L.S.~Shapley. College admissions and the stability of marriage.
\textsl{Amer. Math. Monthly} \textbf{69} (1) (1962) 9--15.
%
\vspace{-7pt}
  \bibitem{GJ} M.R.~Garey and D.S.~Johnson. \textsl{Computers and Intractability}, Freeman, SF, California, 1978.
  %
%
  %
\vspace{-7pt}
 \bibitem{FZ}
Yu.~Faenza and X.~Zhang. Affinely representable lattices, stable matchings, and
choice functions. \textsl{ArXiv}:2011.06763v2[math.CO], 2021.
 %
\vspace{-7pt}
 \bibitem{irv}
R.W.~Irving, An efficient algorithm for the “stable roommates” problem.
\textsl{J. Algorithms} \textbf{6} (1985) 577--595.
 %
\vspace{-7pt}
 \bibitem{IL}
R.W. Irving and P. Leather. The complexity of counting stable marriages.
\textsl{SIAM J. Comput.} \textbf{15} (1986) 655--667.
 %
\vspace{-7pt}
  \bibitem{ILG}
R.W.~Irving, P.~Leather and D.~Gusfield. An efficient algorithm for the optimal
stable marriage problem. \textsl{J. ACM} \textbf{34} (1987) 532--543.
 %
\vspace{-7pt}
 \bibitem{karz1}
A.V. Karzanov. On stable assignments generated by choice functions of mixed
type. \textsl{Discrete Applied Math.} \textbf{358} (2024) 112--135,
https://doi.org/10.1016/j.dam.2024.06.037.
 %
\vspace{-7pt}
%
\bibitem{karz3} A.V. Karzanov. On one generalization of stable allocations on a two-sided market. \textsl{arXiv}:2504.19978v1, 35pp., 2025; DOI: https://doi.org/10.48550/arXiv.2504.19978. 
\vspace{-7pt}
   \bibitem{KC}
A.S. Kelso and V.P. Crawford. Job matching, coalition formation and gross
substitutes. \textsl{Econometrica} \textbf{50} (1982) 1483--1504.
 %
 %
  %
\vspace{-7pt}
 \bibitem{pic}
J. Picard.  Maximum closure of a graph and applications to combinatorial
problems. \textsl{Manage. Sci.} \textbf{22} (1976) 1268--1272.
 %
\vspace{-7pt}
  \bibitem{plott}
C.R.~Plott. Path independence, rationality, and social choice.
\textsl{Econometrica} \textbf{41} (6) (1973) 1075--1091.
  %
\vspace{-7pt}
 \bibitem{roth}
A.E. Roth. Stability and polarization of interests in job matching.
\textsl{Econometrica} \textbf{52} (1984) 47--57.
 %
 %
 %
 %
%
\end{thebibliography}
\end{document}